\newcommand{\entint}[2]{{\zeta}^{{#1}\to{#2}}}
\newcommand{\schrplan}[3]{{\pi}^{{#1}\to{#2},{#3}}}
\newcommand{\EOTplan}[3]{{\pi}^{{#1},{#2}}_{#3}}
\newcommand{\Ent}{\mathsf{Ent}}
\newcommand{\IND}{\mathbbm{1}}
\newcommand{\supp}{\mathrm{supp}}
\newcommand{\De}{\mathrm{d}}
\newcommand{\rmR}{\ensuremath{\mathrm{R}}}
\newcommand{\rmP}{\ensuremath{\mathrm{P}}}
\newcommand{\cC}{\ensuremath{\mathcal C}}
\newcommand{\cH}{\ensuremath{\mathcal H}}
\newcommand{\cI}{\ensuremath{\mathcal I}}
\newcommand{\cL}{\ensuremath{\mathcal L}}
\newcommand{\cN}{\ensuremath{\mathcal N}}
\newcommand{\cP}{\ensuremath{\mathcal P}}
\newcommand{\cQ}{\ensuremath{\mathcal Q}}
\newcommand{\cS}{\ensuremath{\mathcal S}}
\newcommand{\cW}{\ensuremath{\mathcal W}}
\newcommand{\frm}{\ensuremath{\mathfrak m}}
\newcommand{\frp}{\ensuremath{\mathfrak p}}
\newcommand{\frq}{\ensuremath{\mathfrak q}}
\newcommand{\R}{\mathbb{R}}
\newcommand{\RD}{\mathbb{R}^d}
\newcommand{\N}{\mathbb{N}}
\newcommand{\norm}[1]{\left\lVert #1 \right\rVert}
\newcommand{\Id}{\operatorname{Id}}
\newcommand{\abs}[1]{\left\lvert #1 \right\rvert}
\newcommand{\oo}{{\infty}}
\newcommand{\shortmapor}[1]{{\stackrel{#1}{\longrightarrow}}}
\newcommand{\weakto}{\rightharpoonup} 
\newcommand{\CD}{{\sf CD}}
\newcommand{\hp}{{\sf p}}
\newcommand{\sfd}{{\sf d}}
\title[Convergence to the Brenier map and quantitative stability]{Gradient estimates for the Schr\"odinger potentials: convergence to the Brenier map \\and quantitative stability}
\date{\today}
\author{Alberto Chiarini}
\address{Università degli studi di Padova}
\curraddr{Dipartimento di Matematica ``Tullio Levi-Civita'', 35121 Padova, Italy.}
\email{chiarini@math.unipd.it}
\thanks{While this work was written, AC was associated to INdAM and the group GNAMPA}
\author{Giovanni Conforti}
\address{\'Ecole Polytechnique}
\curraddr{D\'epartement de Math\'ematiques Appliqu\'es,
Palaiseau, France.}
\email{giovanni.conforti@polytechnique.edu}
\thanks{GC acknowledges funding from the grant SPOT (ANR-20-CE40-
0014)}
\author{Giacomo Greco}
\address{Eindhoven University of Technology}
\curraddr{Department of Mathematics and Computer Science, 5600MB Eindhoven, Netherlands.}
\email{g.greco@tue.nl}
\thanks{GG acknowledges support from NWO Research Project 613.009. ``Analysis meets Stochastics: Scaling limits in complex systems"}
\author{Luca Tamanini}
\address{Università Bocconi}
\curraddr{Bocconi Institute for Data Science and Analytics (BIDSA), 20136 Milano, Italy.}
\email{luca.tamanini@unibocconi.it}
\thanks{LT acknowledges support from the PRIN 2017 (Prot. 2017TEXA3H) ``Gradient flows, Optimal Transport and Metric Measure Structures''}
\subjclass[2020]{49Q22, 60E15, 34K20 (Primary), 47D07, 53C21 (Secondary)}
\keywords{Optimal transport, Schr\"odinger problem, Schr\"odinger potentials, entropic regularization, quantitative stability, gradient estimates, curvature lower bounds \\
\indent\textit{Data availability.} Data sharing not applicable to this article as no datasets were generated or analysed during the current study.}
\newtheorem{theorem}{Theorem}[section]
\newtheorem{lemma}[theorem]{Lemma}
\newtheorem{prop}[theorem]{Proposition}
\newtheorem{corollary}[theorem]{Corollary}
\theoremstyle{definition}
\theoremstyle{remark}
\newtheorem{remark}[theorem]{Remark}
\definecolor{antiquefuchsia}{rgb}{0.57, 0.36, 0.51}
\numberwithin{equation}{section}
\begin{document}

\maketitle

\begin{abstract}
    We show convergence of the gradients of the Schr\"odinger potentials to the (uniquely determined) gradient of  Kantorovich potentials in the small-time limit under general assumptions on the marginals, which allow for unbounded densities and supports. Furthermore, we provide novel quantitative stability estimates for the optimal values and optimal couplings for the Schr\"odinger problem (SP), that we express in terms of a negative order weighted homogeneous Sobolev norm. The latter encodes the linearized behavior of the $2$-Wasserstein distance between the marginals.
    The proofs of both results highlight for the first time the relevance of gradient bounds for Schr\"odinger potentials, that we establish here in full generality, in the analysis of the short-time behavior of Schr\"odinger bridges.
    Finally, we discuss how our results translate into the framework of quadratic Entropic Optimal Transport, that is a version of SP more suitable for applications in machine learning and data science.
\end{abstract}


\section{Introduction}

The Schr\"odinger problem (SP) as introduced by E.\ Schr\"odinger himself in the seminal work \cite{Schr32} consists in finding the most likely evolution of a cloud of independent Brownian particles between two observed configurations. When the two observation times are very close, SP converges  in a suitable sense to the optimal transport (OT) problem. This connection was first understood by Mikami in \cite{Mikami04} and deeply investigated thereafter. By now, researchers from diverse areas ranging from functional inequalities \cite{gentil2020dynamical} to statistical machine learning \cite{peyre2019computational}, control engineering \cite{chen2021stochastic}, and numerics for PDEs \cite{benamou2021optimal} find common ground on the study of SP. 

In its classical formulation, given two probability measures $\mu,\,\nu$ (i.e.\ the two observed configurations), SP consists in
\begin{equation*}
\text{minimize }  \cH(\pi|\rmR_{0,T}) \text{ under the constraint } \pi\in\Pi(\mu,\nu)\,,
\end{equation*}
where $\rmR_{0,T}\in\cP(\mathbb{R}^{2d})$ denotes the heat kernel 
$\mathrm{R}_{0,T}(\De x\De y)\propto\exp(-|x-y|^2/2T)\De x\De y$, $\Pi(\mu,\nu)$ denotes the set of of couplings between $\mu$ and $\nu$, whereas for any two probability measures $\frp,\,\frq$ the quantity $\cH(\frp|\frq)$ is the relative entropy of $\frp$ w.r.t.\ $\frq$, defined as
\[
\cH(\frp|\frq)\coloneqq\begin{cases}
\displaystyle{\int \log\frac{\De \frp}{\De\frq}\,\De\frp} \quad&\text{if }\frp\ll\frq\,,\\
+\oo\quad&\text{otherwise.}
\end{cases}
\]
We postpone to Section \ref{sec:reg} a precise definition of the relative entropy $\cH(\cdot|\frq)$ when the reference $\frq$ is not a probability measure. The mathematical formulation of SP as an entropy minimization problem comes from the theory of large deviations (Sanov's Theorem), which states that the large deviation rate function for the empirical measure of independent Brownian motions is equal to the relative entropy w.r.t.\ the stationary Wiener measure. Interestingly, an equivalent formulation of SP as a stochastic optimal control problem (via Girsanov's Theorem, \cite[Theorem 2]{Leo2012Girsanov}) is possible, that is, SP is equivalent to
\begin{equation*}
\begin{aligned}
\,&\text{minimize }  \|\alpha^{\rmP}(t,x)\|_{L^2(\De t\times\De\rmP)}\, \text{ among the path-probabilities } \rmP\in\cP(\cC([0,T],\RD)\,\text{ s.t.}\\
\, & \rmP\text{-a.s.} \,\text{ it holds }\quad \begin{cases}
    \De X_t=\alpha^{\rmP}(t,X_t)\De t+\De B_t\,,\\
    X_0\sim \mu,\quad X_T\sim \nu\,.
\end{cases}
\end{aligned}
\end{equation*}
The above control problem has to be understood as choosing the drift $\alpha^{\rmP}$ that steers the system from $\mu$ to the target measure $\nu$ and that at the same time minimizes the cost of such deviation (i.e. $\|\alpha^{\rmP}(t,x)\|_{L^2(\De t\times\De\rmP)}$). For this reason the optimizer in the above problem is usually referred to as optimal control or optimal \emph{corrector}. This dynamical interpretation of SP provides also a tight connection with PDEs. Indeed, the optimal corrector is of gradient type, i.e.\ $\alpha(t,x)=\nabla u_t(x)$, and $u_t$ solves the Hamilton-Jacobi-Bellman equation
\begin{equation}\label{eq:intro:HJB}
    \begin{cases}
       \displaystyle{\partial_t u_t+\frac12\Delta u_t+\frac12|\nabla u_t|^2=0} \\
       u_T=\psi\,,
    \end{cases}
\end{equation}
for some measurable function $\psi$ as final condition (see e.g.\ \cite[Proposition 6]{LeoSch}).

\bigskip

Remarkably, SP is also equivalent to the entropic optimal transport problem (EOT), which is of great interest in data science and machine learning, and which amounts to
\begin{equation*}
\text{minimize }  \int \frac{|x-y|^2}{2}\,\De\pi+\varepsilon\,\cH(\pi|\mu\otimes\nu) \text{ under the constraint } \pi\in\Pi(\mu,\nu)\,,
\end{equation*}
with $\varepsilon>0$ being a regularization parameter. Note indeed that, in the stochastic control interpretation of SP, $T$ plays the role of a time horizon, so that the marginals $\mu,\nu$ are respectively observed at the times $t=0$ and $t=T$. But if the time interval $[0,T]$ is rescaled to $[0,1]$, then $T$ can be naturally interpreted as a diffusion/regularization parameter, as after algebraic manipulations it appears in \eqref{eq:intro:HJB} in front of the Laplacian. Hence, if $\Ent(\cdot)$ denotes the relative entropy w.r.t.\ the Lebesgue measure and we take into account the following identity
\begin{equation*}
T\,\cH(\pi|\rmR_{0,T})-T\,\Ent(\mu)-T\,\Ent(\nu)=\int \frac{|x-y|^2}{2}\,\De\pi+T\,\cH(\pi|\mu\otimes\nu)\quad \text{ for any }\pi\in\Pi(\mu,\nu),
\end{equation*}
we immediately deduce the equivalence between SP and EOT. This allows to translate results from SP to EOT and viceversa.
There are however some small differences between the two and, perhaps more importantly, some hypotheses that are natural from the viewpoint of particle systems, may not be so from a computational standpoint, and viceversa. We shall elaborate more on this point at various places throughout the article.
\vspace{\baselineskip}

The present article aims at bringing some new insights on two basic questions, namely: the convergence of the gradient of the Schr\"odinger potentials (the entropic counterpart to Kantorovich potentials) in the small-time limit; and the stability of optimal solutions with respect to variations in the marginal inputs. In a nutshell, our main contributions are:

\begin{itemize}
\item Theorem \ref{thm:gradient}, where we show under a curvature-dimension condition and a finite Fisher information assumption a strong $L^2$-convergence result for the gradient of the Schr\"odinger potentials (resp.\ Schr\"odinger map) to the uniquely determined gradient of (any) Kantorovich potential (resp.\ to the Brenier map \cite{brenier1991polar}). See~\eqref{phi:decomposition} and Section \ref{intro:grad} for the terminology.
\item Theorem \ref{stab:piani}, where we exhibit explicit upper bounds for the symmetric relative entropy between optimal plans for different sets of marginals. These stability bounds are expressed in terms of a weighted Sobolev norm between marginals and some universal constants depending on curvature lower bounds on the underlying manifold.
\end{itemize}
For both results, we shall provide right after their statements an extensive comparison with the rapidly developing literature on the subject and further comments. An aspect that, however, deserves to be pointed out at this stage is the fact that Theorem \ref{thm:gradient} can be linked with the Monge-Ampère equation. Indeed, it is well known that in $\R^d$ (and under suitable assumptions that we will not detail; the reader may refer to the survey \cite{DePFig2014}) the Brenier map is the gradient of a convex function $u$ solving
\[
\det(D^2 u(x)) = \frac{\rho(x)}{\sigma(\nabla u(x))}, \qquad \textrm{for } \mu\textrm{-a.e. } x,
\]
coupled with the ``boundary condition'' $\nabla u(\supp(\mu))=\supp(\nu)$, where $\rho,\sigma$ are the densities w.r.t.\ the Lebesgue measure of $\mu,\nu$ respectively. This allows to state that $x-2T\varphi^T$ is an ``approximated'' solution to the above Monge-Ampère equation, $\varphi^T$ being the first Schr\"odinger potential in the pair $(\varphi^T,\psi^T)$ associated with $\mu$ and $\nu$ (cf.\ \eqref{phi:decomposition}).

\medskip

The backbone of our proof strategy are gradient estimates for Schr\"odinger potentials, that we call \emph{corrector estimates} in view of the above stochastic control interpretation, since they provide contractive estimates for $\|\alpha^{\rmP}(t,x)\|_{L^2(\De\rmP)}$ for $t\in[0,T]$. These bounds have been put forward in \cite{conforti2019second} to show a quantitative form of convex entropy decay along entropic interpolations and are here proven under much weaker assumptions. So far, corrector estimates have found applications in the proof of new functional inequalities and in the study of the long-time behavior of entropic interpolations, see \cite{conforti2019second,luca2022costa} for example, and are known to be equivalent to the celebrated Bakry-\'Emery condition, see \cite{clerc2020variational}. However, to the best of our knowledge, the results of this paper are the first to show the interest of such bounds in the analysis of the convergence of SP towards OT, in which one has to deal with short-time limits instead of large-time limits. It is worth pointing out that the problem we consider here is the Schr\"odinger problem in its large deviations formulation. 

\subsection*{Outline} We continue this introduction by presenting the main mathematical objects studied in this work. We then move to the statement of the main results, that are Theorem \ref{thm:gradient} and Theorem \ref{stab:piani}. Section \ref{sec:reg} provides some additional background on SP, recalls some known results on the subject, and contains the proof of the aforementioned corrector estimates. Section \ref{sec:grad} is devoted to the proof of the convergence to the Brenier map. The proof of the stability estimates is carried out in Section \ref{stab:section} and in its final part we show how these results can lead to new stability estimates for the quadratic entropic OT problem. Finally, we prove in Appendix \ref{app:log:int} an auxiliary result on the log-integrability of measurable functions under some positive and negative $L^p$-moments, upon which all calculations made in Section \ref{stab:section} rely.

\subsection*{Setting and notation} 

In this work we are going to consider a generalization of the classical SP to a more abstract setting and with a non-trivial dynamics. For this reason, let us consider a weighted manifold $(M,g,\frm)$, where $(M,g)$ is a smooth, connected, complete (possibly non-compact) Riemannian manifold without boundary and with metric tensor $g$, and $\frm$ denotes the $\sigma$-finite invariant measure associated to the SDE
\begin{equation}\label{sde}
\De X_t=-\nabla U(X_t)\,\De t+\sqrt{2}\,\De B_t,
\end{equation}
i.e.\ $\De\frm(x)=e^{-U(x)}{\rm vol}(\De x)$, where $U$ is a $C^2$ potential and $B_t$ denotes a standard Brownian motion on the Riemannian manifold $(M,g)$. By $\sfd$ we shall denote the geodesic distance induced by $g$, so that we will more often refer to $(M,\sfd,\frm)$ rather than to $(M,g,\frm)$.

Let $\rmR_{0,T}$ denote the joint law at time $0$ and $T$ of the solution of the previous SDE started at its equilibrium measure $\frm$ and denote its density by $\hp_T = \frac{\De\rmR_{0,T}}{\De(\frm\otimes\frm)}$. Then, for any two probability measures $\mu,\,\nu\in\cP(M)$ the Schr\"odinger problem and the Schr\"odinger cost are respectively defined as the following variational problem and its optimal value
\begin{equation}\label{SP}\tag{SP}
\cC_T(\mu,\nu) \coloneqq \inf_{\pi\in\Pi(\mu,\nu)}\cH(\pi|\rmR_{0,T})\,.
\end{equation}
As we have already mentioned before, one of the reasons why SP has become more and more popular nowadays is that it can be seen as an entropic regularization for the Optimal Transport problem. This has been particularly studied in the more natural setting of EOT, whose abstract formulation reads as follows
\begin{equation}\label{EOT}\tag{$\varepsilon$EOT}
\cS^\varepsilon(\mu,\nu)\coloneqq \inf_{\pi\in\Pi(\mu,\nu)}\int c(x,y)\,\De\pi+\varepsilon\,\cH(\pi|\mu\otimes\nu)\,,\qquad\varepsilon>0\,,
\end{equation}
for any given cost function $c\in L^1(\mu\otimes\nu)$ and noisy regularisation parameter $\varepsilon>0$.
We refer the reader to the lecture notes \cite{Marcel:notes} for an extensive introduction to abstract EOT and its connections with SP. 
The above problem admits a unique optimizer $\EOTplan{\mu}{\nu}{\varepsilon}\in\Pi(\mu,\nu)$ and its density is of the form  
\begin{equation}\label{entropic:potentials}
\frac{\De\EOTplan{\mu}{\nu}{\varepsilon}}{\De(\mu\otimes \nu)}=\exp\bigg(\frac{\varphi_\varepsilon\oplus\psi_\varepsilon-c}{\varepsilon}\bigg) \qquad \mu\otimes\nu\text{-a.s.}
\end{equation}
where $(\varphi_\varepsilon \oplus \psi_\varepsilon)(x,y) := \varphi_\varepsilon(x) + \psi_\varepsilon(y)$, for two measurable functions $\varphi_\varepsilon\in L^1(\mu)$ and $\psi_\varepsilon\in L^1(\nu)$, unique up to the trivial transformation $(\varphi_\varepsilon,\psi_\varepsilon) \mapsto (\varphi_\varepsilon+c,\psi_\varepsilon-c)$ with $c \in \R$, whom we will refer to as the (entropic) potentials.

Related to SP and the EOT problem above, respectively taking the small-time and small-noise limit, is the  dual formulation of the Monge-Kantorovich problem and the associated potentials. That is,
\begin{equation}\label{W2}
\frac14\,\cW_2^2(\mu,\nu)=\sup_{\varphi\in L^1(\mu),\, \psi\in L^1(\nu)\,\colon\varphi\oplus\psi\leq\frac14 \sfd^2(\cdot,\cdot)}\left(\int\varphi\,\De\mu+\int\psi\,\De\nu\right)\,,
\end{equation}
where $\cW_2^2(\mu,\nu)$ denotes the Wasserstein distance of order two. Two functions $(\varphi_0,\psi_0)$ forming an optimal dual pair are called Kantorovich potentials.

\vspace{\baselineskip}

As concerns the ambient space, throughout the paper we are going to assume that $(M,\, \sfd,\,\frm)$ satisfies a curvature-dimension condition, i.e.\ either one of the following holds 
\begin{equation}\label{CD}\tag{CD}
\begin{minipage}{0.9\textwidth}
    \begin{itemize}
     \item $(M,\, \sfd,\,\frm)$ satisfies $\CD(\kappa,N)$ for some $\kappa\in\R$ and $N<+\oo$, or
     \item  $(M,\, \sfd,\,\frm)$ satisfies $\CD(\kappa,\oo)$ for some $\kappa\in\R$ and $\frm(M)=1$,
    \end{itemize}
    \end{minipage}
\end{equation} 
For the reader who is not familiar with the Bakry-\'Emery theory of lower Ricci bounds for diffusion Markov semigroups, beside suggesting the monograph \cite{bakry2013analysis} for the precise definitions of the $\CD(\kappa,N)$ and $\CD(\kappa,\oo)$ conditions, we would like to give a brief explanation. Recalling that $\frm := e^{-U}{\rm vol}$, the constant $\kappa$ plays the role of a lower bound on the Bakry-\'Emery Ricci curvature tensor ${\rm Ric}_U := {\rm Ric}_g + {\rm Hess}(U)$, while $N$ has to be seen as an upper bound on the \emph{effective} dimension of the generator induced by \eqref{sde}. In a more compact way, $\CD(\kappa,N)$ holds if and only if
\[
{\rm Ric}_U - \frac{\nabla U \otimes \nabla U}{N-n} \geq \kappa g, \qquad n:=\dim(M).
\]
For instance, if $M$ is endowed with $\frm={\rm vol}$ and has Ricci curvature bounded from below by $\kappa$, then the $\CD(\kappa,N)$ condition holds with $N=\dim(M)$, as in this case $U=0$ and the Bakry-\'Emery Ricci curvature tensor coincides with the classical one, while the generator induced by \eqref{sde} coincides with the Laplace-Beltrami operator. As an example of $\CD(\kappa,\oo)$ space, instead, one should think of $\R^d$ endowed with the standard Gaussian $\gamma_d$. In this case, $\CD(1,\oo)$ holds but no $\CD(\kappa,N)$ for $\kappa \in \R$ can be satisfied (for instance, because $\CD(\kappa,N)$ implies that $\frm$ is locally doubling, which is not the case for $\gamma_n$). This shows why $N$ is not a topological dimension but is instead related to the generator induced by \eqref{sde}, which in this case is the Witten Laplacian $\Delta_g - \nabla U \cdot \nabla$.

As concerns the marginals, we are going to assume that they satisfy
\begin{equation}\label{H1}\tag{H1}\mu,\,\nu\in\cP_2(M)\,\quad\text{and}\quad\,\cH(\mu|\frm),\,\cH(\nu|\frm)<+\oo\,;\end{equation}
and that either
\begin{equation}\label{H2}\tag{H2}\hspace{-1.2cm}\begin{minipage}{0.9\textwidth}
     \begin{itemize}
         \item  $\displaystyle\frac{\De\mu}{\De\frm},\frac{\De\nu}{\De\frm}\in L^\oo(\frm)$ and are compactly supported, or
         \item for $\frp=\mu,\nu$, $\displaystyle\frac{\De\frp}{\De\frm}$ is locally bounded away from zero on ${\rm int}(\supp(\frp))$ and $\frp(\partial\supp(\frp))=0$.
     \end{itemize}
    \end{minipage}
    \hfill
\end{equation}
The curvature-dimension condition and \eqref{H1} imply (cf.\ Proposition \ref{prop:fg} below) that SP admits a unique minimizer, which we will refer to as the Schr\"odinger optimal plan $\schrplan{\mu}{\nu}{T}\in\Pi(\mu,\nu)$, and that there exist two measurable functions $\varphi=\varphi^T,\,\psi=\psi^T$, known as the \emph{Schr\"odinger potentials} and unique up to the trivial transformation $(\varphi,\psi) \mapsto (\varphi+c,\psi-c)$ with $c \in \R$, such that 
\begin{equation}\label{phi:decomposition}
\frac{\De\schrplan{\mu}{\nu}{T}}{\De\rmR_{0,T}}=\,e^{\varphi\oplus \psi}\qquad\rmR_{0,T}\textrm{ -a.s.}
\end{equation}
Equivalently, there exist two non-negative measurable functions $f,g$, unique up to the trivial transformation $(f,g) \mapsto (cf,c^{-1}g)$ with $c > 0$, such that
\begin{equation}\label{eq:fg-decomposition}
\frac{\De\schrplan{\mu}{\nu}{T}}{\De\rmR_{0,T}}=\,f \otimes g \qquad\rmR_{0,T}\textrm{ -a.s.}
\end{equation}
where $(f \otimes g)(x,y) := f(x)g(y)$. The pair $(f,g)$ is known in the literature under the name of \emph{$fg$-decomposition}.

We are going to show (cf.\ Proposition \ref{lemma:gia} below) that, solely under assumptions \eqref{H1} and \eqref{H2}, the curvature-dimension condition entails the following estimates for the gradients of the Schr\"odinger potentials
\begin{equation}\label{corr:est}
   \norm{\nabla \varphi_T^T}_{L^2(\nu)}^2\leq \frac{1}{E_{2\kappa}(T)}\biggl[\cC_T(\mu,\nu)-\cH(\nu|\frm)\biggr],\quad\norm{\nabla \psi_0^T}_{L^2(\mu)}^2\leq \frac{1}{E_{2\kappa}(T)}\biggl[\cC_T(\mu,\nu)-\cH(\mu|\frm)\biggr]
\end{equation}
where $\psi_0^T\coloneqq \log P_T e^\psi$, $\varphi^T_T\coloneqq \log P_T e^\varphi$, $P_T$ denotes the semigroup associated to the SDE \eqref{sde} (defined for any non-negative measurable function $u$ as the conditional expected value $P_t u(x)\coloneqq \mathbb{E}[u(X_t)|X_0=x]$, cf. \cite{bakry2013analysis}) and where $E_{2\kappa}$ is defined as
\begin{equation}\label{eq:curvature-factor}
E_{2\kappa}(t)\coloneqq\int_0^t e^{2\kappa s}\De s.
\end{equation}
We refer to this type of bounds as \emph{corrector estimates}. Indeed, it is immediate to observe that $\varphi^T_t := \log P_t e^\varphi$ and $\psi^T_t := \log P_{T-t}e^\psi$ solve the following Hamilton-Jacobi-Bellman (HJB) equations
\[
\begin{cases}
    \partial_t\psi^T_t + |\nabla\psi^T_t|^2 + \Delta\psi^T_t=0\\
    \psi^T_T=\psi
\end{cases}
\qquad \textrm{and} \qquad
\begin{cases}
    \partial_t\varphi^T_t - |\nabla\varphi^T_t|^2 - \Delta\varphi^T_t=0\\
    \varphi^T_0=\varphi
\end{cases} 
\]
on $[0,T] \times M$. The former (up to the scaling parameter $1/2$) corresponds to \eqref{eq:intro:HJB} and its validity comes from the fact that $\nabla \psi^T_t$ is the optimal corrector in the stochastic optimal control formulation. The second HJB equation can be seen as a time-reversal of the former, and indeed $\nabla \varphi^T_t$ provides the optimal corrector for a time-reversed SP. For this reason $\nabla \psi^T_t$ and $\nabla \varphi^T_t$ are also usually referred to respectively as \emph{forward} and \emph{backward} corrector. Let us further observe that above we consider the same semigroup $(P_t)_{t\in [0,T]}$ for both the forward and backward corrector since the underlying dynamics \eqref{sde} is reversible in time.

The aim of this paper is showing that the curvature-dimension condition \eqref{CD} and the corrector estimates \eqref{corr:est} allow to tackle two different aspects of the Schr\"odinger problem. On the one hand, they will be instrumental to show the convergence of the gradients of the potentials in the small-time limit; on the other hand, they will lead to new stability estimates for SP.

\subsection{Convergence of the gradients of the Schr\"odinger potentials}\label{intro:grad}

A cornerstone result in (quadratic) optimal transport \cite{brenier1991polar} states (actually under weaker assumptions than the standing ones on $\mu,\nu$) the existence of a measurable function $\tau\colon M\to M$ such that $\tau_{\#}\mu=\nu$ and 
\begin{equation}\label{tau}
\int\sfd^2(x,\tau(x))\,\De\mu=\cW_2^2(\mu,\nu)=\inf_{\pi\in\Pi(\mu,\nu)}\int\sfd^2(x,y)\,\De\pi\,.
\end{equation}
The function $\tau$ is called the \emph{Brenier map} from $\mu$ to $\nu$. After \cite[Proposition 3.1]{Figalli07}, \cite[Theorem 1.1]{FigGigli2011} (see also the discussion therein) it is well known that $\tau=\Id-2\nabla\varphi^0$, where $\varphi^0$ is a Kantorovich potential  associated to \eqref{W2}. Our first main contribution is the strong convergence of the rescaled \emph{gradients} of the Schr\"odinger potentials to $\nabla\varphi^0$, which is uniquely determined although $\varphi^0$ is not, and thus the \emph{Schr\"odinger map} ${\rm Id}-2T\nabla\varphi^T$ from $\mu$ to $\nu$ to the Brenier map from $\mu$ to $\nu$. To state the result, we introduce the Fisher information, which is defined for any probability measure $\frp\ll\frm$ as the quantity
\begin{align*}
\cI(\frp)\coloneqq\begin{cases}
\norm{\nabla\log\frac{\De\frp}{\De\frm}}_{L^2(\frp)}^2\quad&\text{if }\nabla\log\frac{\De\frp}{\De\frm}\in L^2(\frp)\,,\\
+\oo\quad&\text{otherwise.}
\end{cases}
\end{align*}

\begin{theorem}\label{thm:gradient}
Suppose $(M,\sfd,\frm)$ satisfies \eqref{CD} and that \eqref{H1} holds true. If $\frac{\De\mu}{\De\frm}$ is locally bounded away from $0$ on ${\rm int}(\supp(\mu))$, if $\mu(\partial\,\supp(\mu))=0$ and if the Fisher information $\cI(\mu)$ is finite, then the Schr\"odinger map from $\mu$ to $\nu$ converges strongly in $L^2(\mu)$ to the Brenier map from $\mu$ to $\nu$ in the small-time limit. Equivalently, there are $(\varphi^0,\psi^0)$ Kantorovich potentials such that as $T\downarrow 0$
\[ 
T\nabla\varphi^T\,\to\,\nabla\varphi^0\quad\text{strongly in }L^2(\mu)\,.
\] 
A similar result holds for $T\nabla\psi^T$ under the corresponding assumptions on $\nu$.
\end{theorem}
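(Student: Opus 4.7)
The plan is to combine the corrector estimate \eqref{corr:est} for the backward potential $\psi_0^T$ with the classical small-time limit $T\cC_T(\mu,\nu)\to\tfrac14\cW_2^2(\mu,\nu)$ of the Schr\"odinger cost and with the $\cW_2$-convergence of the optimal entropic couplings $\schrplan{\mu}{\nu}{T}$ to the Monge plan $(\Id,\tau)_\#\mu$, where $\tau$ is the Brenier map. The Fisher information hypothesis on $\mu$ enters to absorb an auxiliary term coming from $\nabla\log(\De\mu/\De\frm)$.

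Since $\mu$ is the first marginal of $\schrplan{\mu}{\nu}{T}$, the $fg$-decomposition \eqref{eq:fg-decomposition} yields the pointwise identity $\log(\De\mu/\De\frm)=\varphi^T+\psi_0^T$ $\mu$-a.e., and therefore
\[
T\nabla\varphi^T \,=\, T\nabla\log\frac{\De\mu}{\De\frm}\,-\,T\nabla\psi_0^T\qquad \mu\text{-a.e.}
\]
The $L^2(\mu)$-norm of the first term on the right equals $T\sqrt{\cI(\mu)}\to 0$. Hence the theorem reduces to proving $T\nabla\psi_0^T\to-\nabla\varphi^0$ strongly in $L^2(\mu)$ for some Kantorovich potential $\varphi^0$ of \eqref{W2}. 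Applying the corrector estimate \eqref{corr:est} and multiplying by $T^2$ gives the sharp a priori bound
\[
\|T\nabla\psi_0^T\|_{L^2(\mu)}^2 \,\le\,\frac{T}{E_{2\kappa}(T)}\bigl(T\cC_T(\mu,\nu)-T\cH(\mu|\frm)\bigr) \,\xrightarrow[T\downarrow 0]{}\, \tfrac14\cW_2^2(\mu,\nu)\,=\,\|\nabla\varphi^0\|_{L^2(\mu)}^2,
\]
where I used $T/E_{2\kappa}(T)\to 1$, $T\cH(\mu|\frm)\to 0$, the classical small-time limit of $T\cC_T$, and the identity $\|\nabla\varphi^0\|_{L^2(\mu)}^2=\tfrac14\cW_2^2(\mu,\nu)$ (a consequence of $\tau=\exp_x(-2\nabla\varphi^0)$ and \eqref{tau}).

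The uniform $L^2(\mu;TM)$-bound yields a subsequence $T_n\nabla\psi_0^{T_n}\rightharpoonup h$. The delicate step, and the main obstacle, is to identify $h=-\nabla\varphi^0$. Starting from the kernel representation
\[
\nabla\psi_0^T(x)\,=\,\int\nabla_x\log p_T(x,y)\,\kappa^T(x,\De y),
\]
where $p_T=\De\rmR_{0,T}/\De(\frm\otimes\frm)$ and $\kappa^T(x,\cdot)$ is the disintegration of $\schrplan{\mu}{\nu}{T}$ against its first marginal, and using Varadhan-type short-time asymptotics $\nabla_x\log p_T(x,y)\sim \exp_x^{-1}(y)/(2T)$, I would test against a smooth, compactly supported vector field $\zeta$ to obtain
\[
\int\zeta\cdot T\nabla\psi_0^T\,\De\mu \,=\,\tfrac12\int\zeta(x)\cdot\exp_x^{-1}(y)\,\De\schrplan{\mu}{\nu}{T}(x,y)\,+\,o(1)\,.
\]
Under the standing assumptions the plans $\schrplan{\mu}{\nu}{T}$ converge in $\cW_2$ to $(\Id,\tau)_\#\mu$, so passing to the limit and using $\exp_x^{-1}(\tau(x))=-2\nabla\varphi^0(x)$ gives $\langle\zeta,h\rangle_{L^2(\mu)}=-\int\zeta\cdot\nabla\varphi^0\,\De\mu$, whence $h=-\nabla\varphi^0$ $\mu$-a.e. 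Controlling the $o(1)$ remainder rigorously—uniformly in $T$ and without compact support of the marginals—is where the bulk of the technical work will lie.

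Finally, weak convergence combined with $\limsup\|T_n\nabla\psi_0^{T_n}\|_{L^2(\mu)}\le\|\nabla\varphi^0\|_{L^2(\mu)}=\|h\|_{L^2(\mu)}$ upgrades to strong convergence of the subsequence, and uniqueness of $\nabla\varphi^0$ (Brenier) lifts this to the full net. The corresponding statement for $T\nabla\psi^T\to\nabla\psi^0$ in $L^2(\nu)$ follows by swapping $\mu\leftrightarrow\nu$ and using the companion bound in \eqref{corr:est}.
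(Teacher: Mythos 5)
Your opening and closing moves coincide with the paper's: the decomposition $\log\frac{\De\mu}{\De\frm}=\varphi^T+\psi^T_0$ from the Schr\"odinger system, the corrector estimate giving $\limsup_{T\downarrow 0}\|T\nabla\varphi^T\|_{L^2(\mu)}\le \tfrac12\cW_2(\mu,\nu)$, and the final upgrade from weak to strong convergence via $\cW_2(\mu,\nu)=2\|\nabla\varphi^0\|_{L^2(\mu)}$ are exactly Steps 1 and 4 of the paper's argument. The problem is the middle: your identification of the weak limit rests on a gradient-level Varadhan asymptotic, $\nabla_x\log\hp_T(x,y)\sim \exp_x^{-1}(y)/(2T)$, inserted into the disintegration of the optimal plan, together with $\cW_2$-convergence of $\pi^{\mu\to\nu,T}$ to the Monge plan. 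This is precisely the step you defer (``where the bulk of the technical work will lie''), and it is a genuine gap, not a routine verification. Norris' theorem gives only the uniform-on-compacts convergence of $T\log\hp_T$ to $-\sfd^2/4$; no gradient version is available under \eqref{CD} alone, the pointwise relation $\nabla_x\sfd^2(x,y)/2=-\exp_x^{-1}(y)$ fails on the cut locus (where $(x,y)\mapsto\zeta(x)\cdot\exp_x^{-1}(y)$ is not even continuous, so passing to the limit against the plans is itself problematic), and the marginals here may have unbounded supports and densities, so no uniform control of the remainder is in sight. As written, the central step of the theorem is asserted rather than proved.

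The paper's route to the identification is different and avoids heat-kernel gradient asymptotics altogether: it first proves strong $L^1$-convergence of the \emph{potentials} $T\varphi^T\to\varphi^0$ (Lemma \ref{lemma:l1-conv-potentials}, via the Nutz--Wiesel stability argument plus Norris' zeroth-order asymptotics and the Gaussian lower bounds \eqref{eq:gaussian-lower} and \eqref{eq:gaussian-estimates}), then upgrades this to weak $L^2_{loc}({\rm vol})$-convergence on balls where $\rho$ is bounded below using a local Poincar\'e inequality, and finally identifies the weak $L^2$-limit of $T\nabla\varphi^T$ with the distributional gradient of $\varphi^0$ by integrating by parts against smooth compactly supported vector fields on truncations $\mu_k=\min\{\rho,k\}\frm$; the finite Fisher information is used there a second time, to justify $\nabla\rho_k\in L^2$ and the Leibniz rule, not only to kill the term $T\nabla\log\rho$. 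If you want to salvage your approach you would need to either prove the gradient asymptotics of $\log\hp_T$ in this generality (which is essentially an open technical problem off compact sets and near the cut locus) or switch to the paper's potential-level identification. The local assumptions on $\rho$ and on $\mu(\partial\supp(\mu))$, which your sketch never invokes, are exactly what makes the paper's integration-by-parts localization work.
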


\subsubsection*{Literature review}  
 For both SP and EOT the small-time and small-noise limits have been extensively studied in the literature. For what concerns the convergence of the rescaled entropic cost to the squared Wasserstein distance, the first result is that of \cite{Mikami04}, eventually generalized in \cite{Leo12}.
 As a further step in the analysis of this convergence, in \cite{mark} (in dimension one), in \cite{ErbarMassRenger} (for the multi-dimensional case) and most recently in \cite{soumik19}, the first-order Taylor expansion of the entropic cost has been investigated showing that the first-order coefficient is given by the sum of the relative entropies of the marginals. Under stronger assumptions on the marginals, the second and fourth authors determined in \cite{conforti2019second} the second-order Taylor expansion of the entropic cost showing that the second-order term is related to the average of the Fisher information along the geodesic between the marginals (an analogous result has also been obtained slightly later in \cite{chizat2020faster}). 
 Most recently, in \cite{CPT22} the fourth author and his coauthors analyze the small-noise limit in the EOT setting under very general assumptions on the cost function $c$ (which allow for non-uniqueness of the optimal transport coupling for instance).
 
 Alongside this line of work, in \cite{Mikami04} and \cite{BerntonGhosalNutz} it is shown that in the small-time and small-noise limits the optimal solutions to SP and EOT respectively converge to the optimal coupling of the OT problem. When it comes to the convergence of dual optimizers,
 in the EOT setting  
  it is proven in \cite{nutz2021entropic} that the entropic potentials $(\varphi_\varepsilon,\psi_\varepsilon)$ converge to the Kantorovich potentials (when the latter are unique) associated to the Monge-Kantorovich problem with cost $c$
 \[
 \sup_{\varphi\in L^1(\mu),\, \psi\in L^1(\nu)\,\colon\varphi\oplus\psi\leq c}\left(\int\varphi\,\De\mu+\int\psi\,\De\nu\right)\,.
 \]
 More precisely, in \cite{nutz2021entropic} the authors prove in the small-noise limit $\varepsilon\downarrow 0$ that the sequences $\{\varphi_\varepsilon\}_{\varepsilon>0}$ and $\{\psi_\varepsilon\}_{\varepsilon>0}$ are (strongly) compact in $L^1(\mu)$ and $L^1(\nu)$ respectively, and that their accumulation points are optimizers of the above Monge-Kantorovich problem. Adapting their proof strategy and relying on \cite{Norris97}, we show under a curvature-dimension condition a similar convergence statement for SP, i.e.\ that the Schr\"odinger potentials  $(T\,\varphi^T,\,T\,\psi^T)$, as defined in \eqref{phi:decomposition}, $L^1$-strongly converge (up to restricting to a subsequence) to a pair of Kantorovich potentials $(\varphi^0,\psi^0)$. This is proven in Lemma \ref{lemma:l1-conv-potentials}. Heuristically speaking, this follows from the connection between EOT and SP given by taking $\varepsilon=T$ and $c=-T\log\frac{\De\rmR_{0,T}}{\De(\mu\otimes\nu)}$ in \eqref{EOT}, which leads to $\cS^T(\mu,\nu)=T\,\cC_T(\mu,\nu)$,  $\EOTplan{\mu}{\nu}{T}\equiv\schrplan{\mu}{\nu}{T}$ and to the equality $T(\varphi^T\oplus\psi^T)=\varphi_T\oplus\psi_T$ between the Schr\"odinger potentials $(\varphi^T,\psi^T)$, as defined in \eqref{phi:decomposition},
 and the entropic potentials, as defined in \eqref{entropic:potentials}.
 
 Regarding the convergence of the gradients of the Schrödinger potentials, to the best of our knowledge Theorem \ref{thm:gradient} is a novelty in the setting we are dealing with. Indeed, closely related results have been obtained in \cite{EstimationEOTmap} and \cite{CPT22}, but in a more restrictive setting (the quadratic Euclidean EOT problem) and under strong regularity assumptions. As regards to \cite{EstimationEOTmap}, uniform bounds on the Hessian of the Kantorovich potential $\varphi^0$ are required to prove a modified version of the aforementioned convergence. Furthermore, the authors work in a very specific setting where the marginals $\mu,\nu$ are compactly supported and with densities globally bounded away from $0$ and $\oo$ on their supports. Moreover they show the convergence of the gradients of the potentials associated to a modified EOT where $\mu$ and $\nu$ are replaced by empirical measures associated to an $n$-sample; therefore they take a regularization parameter $\varepsilon$ depending on the batch size $n$ and then consider the limit $n\to\oo$. A result closer to Theorem \ref{thm:gradient} can be found instead in \cite{CPT22}, but again the marginals $\mu,\nu$ have compact, smooth, and uniformly convex supports and their densities are H\"older continuous and bounded away from 0 therein.

\subsection{Quantitative stability in terms of a weighted homogeneous Sobolev norm}\label{intro:sub:stab}
As a second main contribution, we are interested in explicitly quantifying how 
sensitive are the optimal costs and optimal plans in SP to variations of the marginal constraints.
In this respect, we show (cf.\ Theorem \ref{stab:piani}) that the curvature-dimension condition implies a rather general and explicit stability result in terms of the symmetric relative entropy, i.e.
\[
\cH^\mathrm{sym}(\mu,\bar\mu)\coloneqq \cH(\mu|\bar\mu)+\cH(\bar\mu|\mu)\,,
\]
and in terms of a negative-order weighted homogeneous Sobolev norm, which is defined for any signed measure $\nu$ as follows:
\begin{equation*}
    \norm{\nu}_{\dot{H}^{-1}(\mu)} \coloneqq \sup\biggl\{ \abs{\langle h, \nu \rangle} \,: \norm{h}_{\dot{H}^1(\mu)}\leq 1 \biggr\}\,,\qquad\text{where }\norm{h}_{\dot{H}^1(\mu)}^2 \coloneqq \int \abs{\nabla h}^2 \De \mu\,.
\end{equation*}
This dual norm on the space of signed measures encodes the linearized behavior of the Wasserstein distance $\cW_2$ for infinitesimal perturbations (see \cite{Comp:H-1:W} and references therein). For instance, if $\mu\in\cP_2(\RD)$, $\mu\ll\cL_d$ and $\bar\mu^\varepsilon=(1+\varepsilon h)\mu$ for some $h\in L^\oo(\mu)$ with $\int_{\RD} h\De\mu=0$, then \cite[Theorem 7.26]{Villani:TopicsOT} implies
\[
\norm{\mu-\bar\mu^\varepsilon}_{\dot{H}^{-1}(\mu)}=\varepsilon \norm{h\,\mu}_{\dot{H}^{-1}(\mu)}\quad\text{and}\quad\norm{h\,\mu}_{\dot{H}^{-1}(\mu)}\leq\liminf_{\varepsilon\to0}\frac{\cW_2(\mu,\bar\mu^\varepsilon)}{\varepsilon}\,.
\]
Moreover, \cite{Comp:H-1:W} provides non-asymptotic comparisons between this Sobolev norm and the Wasserstein distance. In particular, it always holds $\cW_2(\mu,\bar\mu)\leq 2\norm{\mu-\bar\mu}_{\dot{H}^{-1}(\mu)}$, and when $(M,\sfd,{\rm vol})$ has non-negative Ricci curvature and if the densities $\frac{\De\mu}{\De{\rm vol}}$ and $\frac{\De\bar\mu}{\De{\rm vol}}$ are bounded away from $0$ and $\oo$, the norm $\norm{\mu-\bar\mu}_{\dot{H}^{-1}(\mu)}$ is equivalent to the Wasserstein distance $\cW_2(\mu,\bar\mu)$.

\bigskip

Apart from the curvature-dimension condition \eqref{CD} and from \eqref{H1} and \eqref{H2}, we assume the following integrability condition on the reference measure:
\begin{equation}\label{I}\tag{I}
     \exists\, r>0\,\colon\quad\int \frm\left(B_{\sqrt{T}}(x)\right)^{r\,T}e^{r\,\sfd^2(x,z)}\De\frm(x)<+\oo\quad\forall z\in M\,.
\end{equation}
The reason why we assume the above condition is that our computations will heavily rely on integrating the Schr\"odinger potentials of one problem against the marginal constraints of the other, which requires enough integrability for the former. 

Notice that if $\frm$ is a probability measure, the previous assumption is met as soon as $e^{r\,\sfd^2(x,z_0)}\in L^1(\frm)$ for some $r>0$ and $z_0\in M$, which is true for instance under a positive curvature condition $\CD(\kappa,\oo)$ with $\kappa>0$. Indeed the latter implies a logarithmic Sobolev inequality with parameter $\kappa^{-1}$ \cite[Corollary 5.7.1]{bakry2013analysis} and then by means of Herbst's argument \cite[Proposition 5.4.1]{bakry2013analysis} we get $e^{r\,\sfd^2(x,z_0)}\in L^1(\frm)$ for any $r<\frac{\kappa}{2}$. Moreover, as soon as $\frm$ is a probability measure the above condition \eqref{I} is time-independent and therefore it allows to consider the small-time limit for SP.

\begin{theorem}\label{stab:piani}
Let $(M,\sfd, \frm)$ satisfy \eqref{CD} and \eqref{I}. For any $(\mu,\nu)$ and $(\bar\mu,\bar\nu)$ satisfying \eqref{H1} and \eqref{H2} it holds 
\begin{equation}\label{eq:stab:plans}
\begin{aligned}
\cH&^\mathrm{sym}(\schrplan{\mu}{\nu}{T},\schrplan{\bar\mu}{\bar\nu}{T})\leq \cH^\mathrm{sym}(\mu,\bar\mu)+ \cH^\mathrm{sym}(\nu,\bar\nu)\\
+&\,\frac{1}{\sqrt{E_{2\kappa}(T)}}\biggl[\sqrt{\cC_T(\mu,\nu)-\cH(\mu|\frm)}\norm{\mu-\bar\mu}_{\dot{H}^{-1}(\mu)}+\sqrt{\cC_T(\mu,\nu)-\cH(\nu|\frm)}\norm{\nu-\bar\nu}_{\dot{H}^{-1}(\nu)}\biggr]\\
+&\,\frac{1}{\sqrt{E_{2\kappa}(T)}}\biggl[\sqrt{\cC_T(\bar\mu,\bar\nu)-\cH(\bar\mu|\frm)}\norm{\bar\mu-\mu}_{\dot{H}^{-1}(\bar\mu)}+\sqrt{\cC_T(\bar\mu,\bar\nu)-\cH(\bar\nu|\frm)}\norm{\bar\nu-\nu}_{\dot{H}^{-1}(\bar\nu)}\biggr]
\end{aligned}
\end{equation}
where $E_{2\kappa}$ is defined as in \eqref{eq:curvature-factor}. Moreover, under the same assumptions it also holds
\begin{equation}\label{eq:fish:stab:plans}\begin{aligned}\cH^\mathrm{sym}(\schrplan{\mu}{\nu}{T},\schrplan{\bar\mu}{\bar\nu}{T})\leq\,& \frac{1}{\sqrt{E_{2\kappa}(T)}}\biggl[\sqrt{\cI(\mu)}+\sqrt{\cC_T(\mu,\nu)-\cH(\mu|\frm)}\biggr]\norm{\mu-\bar\mu}_{\dot{H}^{-1}(\mu)}\\
+&\frac{1}{\sqrt{E_{2\kappa}(T)}}\biggl[\sqrt{\cI(\bar\mu)}+\sqrt{\cC_T(\bar\mu,\bar\nu)-\cH(\bar\mu|\frm)}\biggr]\norm{\bar\mu-\mu}_{\dot{H}^{-1}(\bar\mu)}\\
+& \frac{1}{\sqrt{E_{2\kappa}(T)}}\biggl[\sqrt{\cI(\nu)}+\sqrt{\cC_T(\mu,\nu)-\cH(\nu|\frm)}\biggr]\norm{\nu-\bar\nu}_{\dot{H}^{-1}(\nu)}\\
+&\frac{1}{\sqrt{E_{2\kappa}(T)}}\biggl[\sqrt{\cI(\bar\nu)}+\sqrt{\cC_T(\bar\mu,\bar\nu)-\cH(\bar\nu|\frm)}\biggr]\norm{\bar\nu-\nu}_{\dot{H}^{-1}(\bar\nu)}\,.
\end{aligned}\end{equation}
\end{theorem}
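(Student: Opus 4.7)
The plan is to start from the $fg$-decomposition \eqref{phi:decomposition}. Denoting the Schr\"odinger potentials of $\schrplan{\mu}{\nu}{T}$ and $\schrplan{\bar\mu}{\bar\nu}{T}$ by $(\varphi,\psi)$ and $(\bar\varphi,\bar\psi)$ respectively, one has $\frac{\De\schrplan{\mu}{\nu}{T}}{\De\schrplan{\bar\mu}{\bar\nu}{T}} = \exp\bigl((\varphi-\bar\varphi)\oplus(\psi-\bar\psi)\bigr)$. Computing both $\cH(\schrplan{\mu}{\nu}{T}|\schrplan{\bar\mu}{\bar\nu}{T})$ and its symmetric counterpart by integrating against the corresponding marginals yields the master identity
\[
\cH^{\mathrm{sym}}(\schrplan{\mu}{\nu}{T},\schrplan{\bar\mu}{\bar\nu}{T}) = \int(\varphi-\bar\varphi)\,\De(\mu-\bar\mu) + \int(\psi-\bar\psi)\,\De(\nu-\bar\nu).
\]
Both inequalities in the theorem reduce to bounding the right-hand side of this identity.

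For \eqref{eq:fish:stab:plans} I estimate each of the four resulting integrals separately. Since $\mu-\bar\mu$ has zero total mass, the definition of the negative Sobolev norm gives $\int\varphi\,\De(\mu-\bar\mu)\leq \norm{\nabla\varphi}_{L^2(\mu)}\norm{\mu-\bar\mu}_{\dot{H}^{-1}(\mu)}$. The marginal identity $\varphi+\psi_0^T = \log\frac{\De\mu}{\De\frm}$ forced by the $fg$-decomposition (it is just $f\cdot P_T g = \frac{\De\mu}{\De\frm}$ in logarithmic form) gives $\nabla\varphi = \nabla\log\frac{\De\mu}{\De\frm}-\nabla\psi_0^T$; taking $L^2(\mu)$-norms, combining the triangle inequality with the corrector estimate \eqref{corr:est}, and recalling the definition of $\cI(\mu)$ then controls $\norm{\nabla\varphi}_{L^2(\mu)}$ by the right-hand side of \eqref{eq:fish:stab:plans}. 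The remaining three pieces involving $\bar\varphi,\psi,\bar\psi$ are handled symmetrically via the corrector bounds for $\nabla\bar\psi_0^T$, $\nabla\varphi_T^T$ and $\nabla\bar\varphi_T^T$, and summing yields the claim.

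For \eqref{eq:stab:plans} I bypass the Fisher information by substituting the pointwise identities $\varphi-\bar\varphi = \bigl(\log\frac{\De\mu}{\De\frm}-\log\frac{\De\bar\mu}{\De\frm}\bigr)-(\psi_0^T-\bar\psi_0^T)$ (and analogously for $\psi-\bar\psi$ via $\varphi_T^T,\bar\varphi_T^T$) into the master identity. The log-density piece produces exactly the symmetric entropy of the marginals,
\[
\int\Bigl(\log\frac{\De\mu}{\De\frm}-\log\frac{\De\bar\mu}{\De\frm}\Bigr)\De(\mu-\bar\mu)=\cH(\mu|\bar\mu)+\cH(\bar\mu|\mu)=\cH^{\mathrm{sym}}(\mu,\bar\mu),
\]
while the remaining contribution $-\int(\psi_0^T-\bar\psi_0^T)\De(\mu-\bar\mu)$ is split and each half is bounded by the same $\dot{H}^1$-$\dot{H}^{-1}$ duality, now applied directly to $\psi_0^T$ on $\dot{H}^1(\mu)$ and to $\bar\psi_0^T$ on $\dot{H}^1(\bar\mu)$. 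The corrector estimate \eqref{corr:est} then produces precisely the coefficients $\sqrt{\cC_T(\mu,\nu)-\cH(\mu|\frm)}$ and $\sqrt{\cC_T(\bar\mu,\bar\nu)-\cH(\bar\mu|\frm)}$ appearing on the right-hand side of \eqref{eq:stab:plans}, with no Fisher information involved.

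The hard part is not the algebra above but its analytic justification: the Schr\"odinger potentials are a priori only measurable, so I must argue that (i) $\varphi,\bar\varphi,\psi_0^T,\bar\psi_0^T$ are integrable against the opposite marginals, so that all cross integrals and the master identity itself make sense; (ii) the identity $\varphi+\psi_0^T=\log\frac{\De\mu}{\De\frm}$ holds $\mu$-a.e.\ and descends to an identity of $L^2(\mu)$-gradients to which the corrector estimate can be applied; and (iii) $\log\frac{\De\mu}{\De\frm}\in L^1(\bar\mu)\cap L^1(\mu)$ together with the symmetric statements. This is exactly the role of the integrability hypothesis \eqref{I} combined with \eqref{H1}-\eqref{H2} and the log-integrability lemma of Appendix~\ref{app:log:int}: they supply enough polynomial and exponential decay on $f,g$ and their logarithms to bridge the purely algebraic identity above with the functional-analytic $\dot{H}^{-1}$-duality and the corrector bounds of \eqref{corr:est}.
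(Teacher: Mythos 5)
Your proposal is correct and follows essentially the same route as the paper: the master identity you write is exactly the paper's identity (the paper derives it by computing $\cH(\schrplan{\mu}{\nu}{T}|\schrplan{\bar\mu}{\bar\nu}{T})-\cH(\mu|\bar\mu)-\cH(\nu|\bar\nu)$ via the $fg$-decomposition and the Schr\"odinger system, which after summing with its symmetric counterpart is your identity with $\varphi-\bar\varphi$ replaced through $\varphi=\log\frac{\De\mu}{\De\frm}-\psi_0^T$), and both estimates are then obtained by the same $\dot H^1$--$\dot H^{-1}$ duality combined with the corrector estimates, with Fisher information entering only through the triangle inequality for $\nabla\varphi$. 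You also correctly identify that the real work is the cross-integrability of the potentials against the opposite marginals, which the paper settles exactly as you indicate, via the $L^p$-bounds on $(P_Tg)^{-1}$ of Lemma \ref{LemmaPg-1} and the log-integrability Lemma \ref{log:integrability} of the appendix.
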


It is worth pointing out that this result and the forthcoming \eqref{eq:stab:cost}, \eqref{eq:fish:stab:cost} require \eqref{H2} only because the latter is required for the corrector estimates. Moreover, as it has already been mentioned, the integrability condition \eqref{I} does not play any role in the constants appearing in the above (and the following) stability bound. Indeed one could, at least formally, perform the computations that lead to \eqref{eq:stab:plans} and \eqref{eq:fish:stab:plans} without this integrability condition.
 
\subsubsection{Literature review} Recently, there has been an increasing interest in the quantitative stability for the EOT problem, which is strongly linked to SP. A first stability estimate for EOT in a general setting has been established in  \cite{ghosal2021stability}. There, the authors, without any integrability assumption, manage to prove a qualitative stability result by relying on a geometric notion inspired by the cyclical monotonicity property in Optimal Transport. 
To the best of our knowledge, the first quantitative stability result is due to Carlier and Laborde in \cite{CarlierLaborde} in the context of multi-marginal EOT. By considering bounded marginals equivalent to a common reference probability measure and a bounded cost, they show that the potentials are Lipschitz-continuous in $L^2$ and $L^\oo$ w.r.t.\ the densities of the marginals. As far as concerns quantitative stability for the primal optimizers, the first result appeared in the work \cite{Deligiannidis2021}. There the authors prove on compact metric spaces a quantitative uniform stability along Sinkhorn's algorithm which implies stability for EOT and more precisely an explicit $\cW_1$-Lipschitzianity of the optimizers w.r.t.\ the marginals. More recently, in \cite{Marcel:quant:stability} a quantitative stability result that holds on general metric spaces is shown, thus removing the compactness assumption at the cost of requiring some exponential integrability with respect to the marginals (condition met for instance when the marginals are sub-Gaussians). More precisely the authors prove a $\frac{1}{2p}$-H\"older continuity for $\cW_p$ provided that the cost function satisfies an abstract condition, introduced there in order to bound the optimal values of EOT by means of the Wasserstein distance between the corresponding optimizers.  
Such condition is met for a wide enough class of cost functions such as $c(x,y)=\abs{x-y}^p$ on the Euclidean space, with $p\in(1,\oo)$. Nevertheless, it is not easily verifiable in general, and in particular when considering the case  $c=-T\,\log \hp_T$ (which allows to translate results from EOT into results for SP). Lastly, it is worth mentioning that in the most recent \cite{Marcel:stab:potentials} the authors study the qualitative stability of the Schr\"odinger potentials associated to EOT in a general setting assuming the cost function $c$ satisfies $e^{\beta c}\in L^1(\mu\otimes\nu)$ for some $\beta>0$, which leads to the convergence of Sinkhorn's algorithm. The \eqref{CD} condition does not have a natural counterpart in the EOT setting, where the results we have just discussed have been established; for this reason our stability result is not implied by any of the stability bounds mentioned above.

\begin{remark}[On the integrability condition \eqref{I}]
The condition
\[
e^{r\,\sfd^2(x,z_0)}\in L^1(\frm)\quad\text{for some }r>0\text{ and }z_0\in M,\quad\frm(M)=1
\]
should be compared with (6.8) and (6.9) in \cite{Marcel:notes} in the EOT setting, by considering the cost function $c(x,y)=-T\log \hp_T$, combined with a Gaussian heat kernel lower bound (cf.\ \eqref{infinity:log:bound} and \eqref{N:log:bound} below). There, the authors are interested in getting uniform bounds on the Schr\"odinger potentials along Sinkhorn's iterates.  However let us stress out that \cite{Marcel:notes} requires $e^{r\,\sfd^2(x,y)}\in L^1(\mu\otimes\nu)$ for some $r>\frac1T$, which in particular does not suit the most interesting EOT regime, i.e.\ the small-noise (or equivalently small-time) limit. On the contrary, our condition only requires $r>0$ independently from the time-window $[0,T]$ and moreover we are able to pass the integrability assumption on the equilibrium measure $\frm$. This suits more the \emph{stability setting} since we would like to keep the assumptions on the marginals as light as possible.
As we have already mentioned before, in the most recent work \cite{Marcel:stab:potentials} the authors manage to prove the uniform integrability of the potentials along Sinkhorn's algorithm by solely requiring $e^{r\,\sfd^2(x,y)}\in L^1(\mu\otimes\nu)$ for some $r>0$ overcoming the small-noise issue present in \cite{Marcel:notes}.  
In conclusion, it is not surprising that our stability results require 
\eqref{I}, since we need enough integrability for the potentials, which is analogous to requiring $e^{r\,\sfd^2(x,y)}\in L^1(\mu\otimes\nu)$ in the EOT setting. 
\end{remark}

\begin{remark}
The cost $\cC_T(\mu,\nu)$ appearing on the right-hand sides of \eqref{eq:stab:plans} and \eqref{eq:fish:stab:plans} can be bounded by the independent coupling $\cH(\mu\otimes\nu|\rmR_{0,T})$ and henceforth (by combining \eqref{remappo}, \eqref{N:log:bound}, and \eqref{remappo2}) by the quantity \begin{equation*}T\,\cC_T(\mu,\nu)\leq  T\,\cH(\mu|\frm)+T\,\cH(\nu|\frm)+C(T+T^2)+M_2(\mu)+M_2(\nu)+C\,T\biggl[M_1(\mu)+M_1(\nu)\biggr]\,,\end{equation*}
where $M_1(\mu),\,M_1(\nu)$ and $M_2(\mu),\,M_2(\nu)$ denote the first and second moments of $\mu$ and $\nu$ and the constant $C\geq 0$ is independent of the marginals (and it is equal to $0$ if $(M,\sfd,\frm)$ satisfies a $\CD(\kappa,\oo)$ condition with $\frm(M)=1$). We have kept the explicit dependence on the cost in \eqref{eq:stab:plans} because this gives a bound which is sharper compared to the one with the independent coupling $\cH(\mu\otimes\nu|\rmR_{0,T})$, especially in the small-time limit $T\downarrow 0$.
  \end{remark}

\begin{remark}[Sharpness in the long-time regime]
Under a non-negative curvature condition we know that in the long-time limit $\schrplan{\mu}{\nu}{T}\weakto \mu\otimes\nu$ (cf.\ \cite[Lemma 3.1]{conforti2021formula}) and hence we expect 
\[\cH^\mathrm{sym}(\schrplan{\mu}{\nu}{T},\schrplan{\bar\mu}{\bar\nu}{T})\to\cH^\mathrm{sym}(\mu\otimes\nu,\bar\mu\otimes\bar\nu)=\cH^\mathrm{sym}(\mu,\bar\mu)+\cH^\mathrm{sym}(\nu,\bar\nu)\,.\]
This shows that the previous bound \eqref{eq:stab:plans} under a non-negative curvature condition is \emph{sharp}. Indeed, if $\kappa\geq 0$ \eqref{eq:stab:plans} implies
\[ 
\limsup_{T\to\oo} \cH^\mathrm{sym}(\schrplan{\mu}{\nu}{T},\schrplan{\bar\mu}{\bar\nu}{T})\leq\cH^\mathrm{sym}(\mu,\bar\mu)+\cH^\mathrm{sym}(\nu,\bar\nu)
\] 
and the above convergence is exponentially fast (of order $\approx e^{-\kappa T}/\sqrt{T}$). This is another confirmation that under the corrector estimates and a non-negative curvature condition we are able to efficiently describe the exact behavior of the Schr\"odinger problem, as it has already been shown in the context of the entropic turnpike estimates (cf.\ \cite[Theorem 1.4]{conforti2019second} in the classic setting, \cite[Theorem 1.5]{backhoff2020mean} for the mean field SP and \cite[Theorem 1.7]{CCG21} for the kinetic SP).
\end{remark}

\begin{remark} A stability result can also be stated at the level of the optimal value of the Schr\"odinger problem, after renormalizing it as follows
\[ 
\cS_T(\mu,\nu)=T\,\cC_T(\mu,\nu)-T\,\cH(\mu|\frm)-T\,\cH(\nu|\frm)\,.
\]
More precisely, our computations show that the quantity $\cS_T$ (hereafter called \emph{entropic cost}) satisfies (cf.\ Proposition \ref{stab:entropic:cost})
\begin{equation}\label{eq:stab:cost} \begin{aligned}
&\abs{\cS_T(\bar\mu,\bar\nu)-\cS_T(\mu,\nu)}\leq T\biggl[\cH^\mathrm{sym}(\mu,\bar\mu)\wedge \cH^\mathrm{sym}(\nu,\bar\nu)\biggr]\\
&+\frac{T}{\sqrt{E_{2\kappa}(T)}}\biggl[\sqrt{\cC_T(\mu,\nu)-\cH(\mu|\frm)}\norm{\mu-\bar\mu}_{\dot{H}^{-1}(\mu)}+\sqrt{\cC_T(\mu,\nu)-\cH(\nu|\frm)}\norm{\nu-\bar\nu}_{\dot{H}^{-1}(\nu)}\biggr]\\
&+\frac{T}{\sqrt{E_{2\kappa}(T)}}\biggl[\sqrt{\cC_T(\bar\mu,\bar\nu)-\cH(\bar\mu|\frm)}\norm{\bar\mu-\mu}_{\dot{H}^{-1}(\bar\mu)}+\sqrt{\cC_T(\bar\mu,\bar\nu)-\cH(\bar\nu|\frm)}\norm{\bar\nu-\nu}_{\dot{H}^{-1}(\bar\nu)}\biggr]
\end{aligned}
\end{equation}
and, under a finite Fisher information assumption, we can deduce that the Schr\"odinger cost enjoys certain Lipschitz-type estimates w.r.t.\ the $\dot{H}^{-1}$  norm  (cf.\ Proposition \ref{stab:fish:finite})
\begin{equation}\label{eq:fish:stab:cost}
\begin{aligned}
\abs{\cC_T(\bar\mu,\bar\nu)-\cC_T(\mu,\nu)} & \leq \frac{1}{\sqrt{E_{2\kappa}(T)}}\biggl[\sqrt{\cI(\mu)}+\sqrt{\cC_T(\mu,\nu)-\cH(\mu|\frm)}\biggr]\norm{\mu-\bar\mu}_{\dot{H}^{-1}(\mu)}\\
& + \frac{1}{\sqrt{E_{2\kappa}(T)}}\biggl[\sqrt{\cI(\bar\mu)}+\sqrt{\cC_T(\bar\mu,\bar\nu)-\cH(\bar\mu|\frm)}\biggr]\norm{\bar\mu-\mu}_{\dot{H}^{-1}(\bar\mu)}\\
& + \frac{1}{\sqrt{E_{2\kappa}(T)}}\biggl[\sqrt{\cI(\nu)}+\sqrt{\cC_T(\mu,\nu)-\cH(\nu|\frm)}\biggr]\norm{\nu-\bar\nu}_{\dot{H}^{-1}(\nu)}\\
& + \frac{1}{\sqrt{E_{2\kappa}(T)}}\biggl[\sqrt{\cI(\bar\nu)}+\sqrt{\cC_T(\bar\mu,\bar\nu)-\cH(\bar\nu|\frm)}\biggr]\norm{\bar\nu-\nu}_{\dot{H}^{-1}(\bar\nu)}\,.
\end{aligned}
\end{equation}

In the EOT setting, the Lipschitzianity of the costs is a well-studied problem. The most general and recent result is the one in \cite{Marcel:quant:stability} where the authors manage to prove Lipschitzianity  with respect to the $p$-Wasserstein distance provided that the cost function satisfies a certain abstract condition introduced therein. The above estimates \eqref{eq:stab:cost} and \eqref{eq:fish:stab:cost} are less tight than theirs, however as we have already mentioned our setting does not satisfy their abstract condition in general. Moreover, since the negative Sobolev norm is intimately related to the $\cW_2$-distance, the above bounds may lead to a $\cW_2$-Lipschitz estimate. We will address this in a future line of work. 
\end{remark}

\begin{remark}\label{small:time:rem}  Let us mention that both Theorem \ref{stab:piani} and the Lipschitz type bounds above are well behaved in the small-time limit $T\downarrow 0$. Indeed, since we have 
\begin{equation}\label{eq:small:cost}\lim_{T\to 0}T\cC_T(\mu,\nu)=\frac{\cW^2_2(\mu,\nu)}{4}\,,\end{equation}
  (cf.\ \cite[Remark 5.11]{GigTam21} for $\CD(\kappa,N)$, or next section and Remark \ref{limite:costo} for $\CD(\kappa,\oo)$), if \eqref{CD}, \eqref{H1}, \eqref{H2} and \eqref{I} (for some fixed time window $T_0>0$) hold true and if  $\cH^\mathrm{sym}(\mu,\bar\mu),\, \cH^\mathrm{sym}(\nu,\bar\nu)$ are both finite, then after rescaling it holds
\begin{align*}
    \limsup_{T\to0} T\,\cH^\mathrm{sym}(\schrplan{\mu}{\nu}{T},\schrplan{\bar\mu}{\bar\nu}{T})\leq&\, \frac{\cW_2(\mu,\nu)}{2}\,\biggl[\norm{\mu-\bar\mu}_{\dot{H}^{-1}(\mu)}+\norm{\nu-\bar\nu}_{\dot{H}^{-1}(\nu)}\biggr]\\
&\qquad\qquad\qquad+\frac{\cW_2(\bar\mu,\bar\nu)}{2}\,\biggl[\norm{\bar\mu-\mu}_{\dot{H}^{-1}(\bar\mu)}+\norm{\bar\nu-\nu}_{\dot{H}^{-1}(\bar\nu)}\biggr]\,,\\
\limsup_{T\to 0}\abs{\cS_T(\bar\mu,\bar\nu)-\cS_T(\mu,\nu)}\leq&\, \frac{\cW_2(\mu,\nu)}{2}\,\biggl[\norm{\mu-\bar\mu}_{\dot{H}^{-1}(\mu)}+\norm{\nu-\bar\nu}_{\dot{H}^{-1}(\nu)}\biggr]\\
&\qquad\qquad\qquad+\frac{\cW_2(\bar\mu,\bar\nu)}{2}\,\biggl[\norm{\bar\mu-\mu}_{\dot{H}^{-1}(\bar\mu)}+\norm{\bar\nu-\nu}_{\dot{H}^{-1}(\bar\nu)}\biggr]\,.
\end{align*}
The $T$ prefactor on the left-hand side of the first displacement should not be surprising. For instance, it also appears in the quantitative stability bounds for the optimizers of EOT (cf.\ \cite[Theorem 3.11]{Marcel:quant:stability}). In fact, we cannot expect in general to bound the symmetric entropy of two optimal couplings since, even assuming the weak convergence of the Schr\"odinger optimizer $\schrplan{\mu}{\nu}{T}$ to the optimal $\cW_2$-coupling $\EOTplan{\mu}{\nu}{0}$, it may happen that $\cH^\mathrm{sym}(\EOTplan{\mu}{\nu}{0},\EOTplan{\bar\mu}{\bar\nu}{0})=+\oo$ while the right-hand side stays finite. For instance, consider $M=(0,1)$ with the marginals $\mu,\,\nu,\,\bar\mu$ equal to the Lebesgue measure on $(0,1)$ and $\De\bar\nu(x)=
(\log 2)2^{x}\,\De x$. Then $\cH^\mathrm{sym}(\nu,\bar\nu)$ and the left-hand sides of our bounds are finite but $\cH^\mathrm{sym}(\EOTplan{\mu}{\nu}{0},\EOTplan{\mu}{\bar\nu}{0})=+\oo$ since the two optimal couplings  $\EOTplan{\mu}{\nu}{0}$ and $\EOTplan{\mu}{\bar\nu}{0}$ have disjoint supports (the first is supported on the diagonal while the latter is supported on the graph of $L(x)=\log_2 x$). For sake of completeness, the finiteness of $\norm{\nu-\bar\nu}_{\dot{H}^{-1}(\nu)}$ is due to \cite[Theorem 5]{Comp:H-1:W}, while the finiteness of $\norm{\bar\nu-\nu}_{\dot{H}^{-1}(\bar\nu)}$ follows from the former and \cite[Lemma 2]{Comp:H-1:W}.
\end{remark}

\medskip

\begin{remark}[An application to the quadratic EOT] For the interested reader, in Section \ref{app:EOT} we provide an explicit connection between SP and the Euclidean EOT problem with quadratic cost
\begin{equation*}
\cS^\varepsilon(\mu,\nu)\coloneqq \inf_{\pi\in\Pi(\mu,\nu)}\int |x-y|^2\,\De\pi+\varepsilon\,\cH(\pi|\mu\otimes\nu)\,,\qquad\varepsilon>0\,,
\end{equation*}
and we show how our estimates and results can be translated to the EOT setting. Here we just mention the fact that our results work for any choice of marginals in $\cP_2(\RD)$ just satisfying a modified version of \eqref{H1} and \eqref{H2}, where the reference measure $\frm$ should be replaced by the Lebesgue measure, without requiring any sub-Gaussianity property. Indeed in Section \ref{app:EOT} we will show that if $\cH^\mathrm{sym}(\mu,\bar\mu),\, \cH^\mathrm{sym}(\nu,\bar\nu)$ are both finite then, by introducing the constant $C_\varepsilon=\frac{d\varepsilon}{2}\log(4\pi\varepsilon)$ we may write
\begin{equation*}\begin{aligned}
    \abs{\cS^\varepsilon(\bar\mu,\bar\nu)-\cS^\varepsilon(\mu,\nu)}\leq \varepsilon\biggl[\cH^\mathrm{sym}(\mu,\bar\mu)\wedge \cH^\mathrm{sym}(\nu,\bar\nu)&\biggr]\\
    +2 \biggl[\sqrt{\cS^\varepsilon(\mu,\nu)+\varepsilon\,\cH(\nu|\cL_d)+C_\varepsilon}\norm{\mu-\bar\mu}_{\dot{H}^{-1}(\mu)}&+\sqrt{\cS^\varepsilon(\mu,\nu)+\varepsilon\,\cH(\mu|\cL_d)+C_\varepsilon}\norm{\nu-\bar\nu}_{\dot{H}^{-1}(\nu)}\biggr]\\
    +2\biggl[\sqrt{\cS^\varepsilon(\bar\mu,\bar\nu)+\varepsilon\,\cH(\bar\nu|\cL_d)+C_\varepsilon}\norm{\bar\mu-\mu}_{\dot{H}^{-1}(\bar\mu)}&+\sqrt{\cS^\varepsilon(\bar\mu,\bar\nu)+\varepsilon\,\cH(\bar\mu|\cL_d)+C_\varepsilon}\norm{\bar\nu-\nu}_{\dot{H}^{-1}(\bar\nu)}\biggr]
\end{aligned}\end{equation*}
and
\begin{equation*}\begin{aligned}
    \varepsilon\,\cH^\mathrm{sym}(\EOTplan{\mu}{\nu}{\varepsilon},\EOTplan{\bar\mu}{\bar\nu}{\varepsilon})\leq\, \varepsilon\,\cH^\mathrm{sym}(\mu,\bar\mu)+ \varepsilon\,\cH^\mathrm{sym}(\nu,\bar\nu)&\,\\
    +2 \biggl[\sqrt{\cS^\varepsilon(\mu,\nu)+\varepsilon\,\cH(\nu|\cL_d)+C_\varepsilon}\norm{\mu-\bar\mu}_{\dot{H}^{-1}(\mu)}&+\sqrt{\cS^\varepsilon(\mu,\nu)+\varepsilon\,\cH(\mu|\cL_d)+C_\varepsilon}\norm{\nu-\bar\nu}_{\dot{H}^{-1}(\nu)}\biggr]\\
    +2\biggl[\sqrt{\cS^\varepsilon(\bar\mu,\bar\nu)+\varepsilon\,\cH(\bar\nu|\cL_d)+C_\varepsilon}\norm{\bar\mu-\mu}_{\dot{H}^{-1}(\bar\mu)}&+\sqrt{\cS^\varepsilon(\bar\mu,\bar\nu)+\varepsilon\,\cH(\bar\mu|\cL_d)+C_\varepsilon}\norm{\bar\nu-\nu}_{\dot{H}^{-1}(\bar\nu)}\biggr]\,.
\end{aligned}\end{equation*}
Similarly to what happens in the small-time limit for SP (cf.\ Remark \ref{small:time:rem} above), the above stability estimates behave well in the small-noise limit (cf.\ \eqref{small:noise:EOT} below).
\end{remark}

\section{Preliminaries}\label{sec:reg}

This section is divided in two halves. In the first part, after recalling the setting, auxiliary results, and introducing further notation, in Proposition \ref{prop:fg} we prove the existence of the $fg$-decomposition and of the Schr\"odinger potentials. The second half is dedicated to the proof of the corrector estimates \eqref{corr:est} which is given in Proposition \ref{lemma:gia}. 

\subsection{Setting}
We start by recalling the assumptions on the triplet $(M,\sfd,\frm)$ that will hold true throughout the whole paper: $M$ is a smooth, connected, complete (possibly non-compact) Riemannian manifold without boundary and with metric tensor $g$; $\sfd$ is the distance induced by $g$; $\frm$ denotes the $\sigma$-finite measure $\De\frm(x)=e^{-U(x)}{\rm vol}(\De x)$, where ${\rm vol}$ is the volume measure of $M$ and $U$ is a $C^2$ potential. We denote by $\cP_p(M)$, $p\geq 1$, the set of probability measures on $M$ with finite $p$-moment, that is such that for a fixed $z_0\in M$ it holds
\[
M_p(\frp)\coloneqq\int\sfd^p(y,z_0)\De\frp(y)<+\oo\,.
\] 
Let us also notice that $\cP_2(M)\subseteq\cP_1(M)$ and in particular $M_1(\frp)\leq 1+M_2(\frp)$ for any $\frp\in\cP_2(M)$.

The curvature assumption \eqref{CD} will also be always satisfied by the triplet $(M,\sfd,\frm)$ and it yields many important consequences, both analytical and geometric. First of all, the heat kernel $\hp_t = \frac{\De\rmR_{0,t}}{\De(\frm\otimes\frm)}$ enjoys a Gaussian lower bound. More precisely, under the $\CD(\kappa,N)$ condition with $N<\infty$, from \cite[Theorem 1.1 and Theorem 1.2]{JLZ14} we know that for every $\delta > 0$ there exist $C_1,C_2$ depending only on $\kappa,N,\delta$ such that
\begin{equation}\label{eq:gaussian-estimates}
\hp_t(x,y) \geq \frac{1}{C_1\frm(B_{\sqrt{t}}(x))}\exp\Big(-\frac{\sfd^2(x,y)}{(4-\delta)t}-C_2t\Big)
\end{equation}
for all $x,y \in M$ and all $t>0$. On the other hand, if we assume that $(M,\sfd,\frm)$ satisfies $\CD(\kappa,\oo)$ with $\frm(M)=1$, then by \cite[Corollary 1.3]{Wang11} it holds
\begin{equation}\label{eq:gaussian-lower}
\hp_t(x,y) \geq \exp\Big(-\frac{\kappa\sfd^2(x,y)}{2(1-e^{-\kappa t})}\Big), \qquad \forall x,y \in M,\, \forall t > 0.
\end{equation}
As a further consequence of the $\CD(\kappa,\infty)$ condition, the semigroup $P_t$ satisfies the $L^\infty$-Lipschitz regularization \cite[Theorem 6.8]{AGS14b}
\begin{equation}\label{eq:linfty-lip-reg}
\sqrt{2E_{2\kappa}(t)}{\rm Lip}(P_t u) \leq \|u\|_{L^\infty(\frm)}, \qquad \forall t > 0,\, u \in L^\infty(\frm),
\end{equation}
where $E_{2\kappa}$ is defined as in \eqref{eq:curvature-factor}. Let us recall that also Hamilton's gradient estimate \cite{Hamilton93, Kotschwar07} (see also \cite{JiangZhang16}) holds true under $\CD(\kappa,\infty)$: for any positive $u \in L^p \cap L^\infty(\frm)$ for some $p \in [1,\infty)$ it holds
\begin{equation}\label{eq:hamilton}
t|\nabla\log P_t u|^2 \leq (1+2\kappa^-t)\log\Big(\frac{\|u\|_{L^\infty(\frm)}}{P_t u}\Big).
\end{equation}
Finally, let us recall that the $\CD(\kappa,N)$ condition with $N<\infty$ entails the Bishop-Gromov inequality (see for instance \cite[Theorem 2.3]{Sturm06II}): for any $x \in \supp(\frm) = M$ and $0 < r \leq R \leq \pi\sqrt{(N-1)/\kappa^+}$, where $(N-1)/\kappa^+=+\infty$ if $\kappa \leq 0$, it holds
\begin{equation}\label{eq:bishop-gromov}
\frac{\frm(B_r(x))}{\frm(B_R(x))} \geq 
\left\{\begin{array}{ll}
\displaystyle{\frac{\int_0^r \sin(t\sqrt{\kappa/(N-1)})^{N-1}\De t}{\int_0^R \sin(t\sqrt{\kappa/(N-1)})^{N-1}\De t}}\,, &\qquad \text{if } \kappa > 0   \,, \\
\\
\bigg(\displaystyle{\frac{r}{R}}\bigg)^N\,, &\qquad \text{if } \kappa = 0   \,, \\
\\
\displaystyle{\frac{\int_0^r \sinh(t\sqrt{-\kappa/(N-1)})^{N-1}\De t}{\int_0^R \sinh(t\sqrt{-\kappa/(N-1)})^{N-1}\De t}}\,, &\qquad \text{if } \kappa < 0\,.
\end{array}\right.
\end{equation}
As a consequence of \eqref{eq:bishop-gromov}, in the following lemma we point out a technical integrability property of the reference measure $\frm$ that in Proposition \ref{prop:fg} will guarantee the existence and uniqueness of the solution to SP as well as the existence of the $fg$-decomposition.

\begin{lemma}\label{lemma:log:balls}
Let $(M,\sfd,\frm)$ satisfy $\CD(\kappa, N)$ with $N<+\oo$. Then for any $r>0$ it holds
\[
\left(\log\frm\left(B_r(\cdot)\right)\right)^+\in L^1(\mu),\quad\forall\mu\in\cP_1(M)\,.
\]
\end{lemma}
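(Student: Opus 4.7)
The plan is to derive an upper bound of the form $\log\frm(B_r(x)) \leq C_1 + C_2\,\sfd(x,z_0)$ for some reference point $z_0 \in M$ and integrate against $\mu$, exploiting the fact that $M_1(\mu) < +\infty$ whenever $\mu \in \cP_1(M)$. The non-trivial input is obtaining the linear-in-distance growth of the logarithm of the volume of balls, which should follow from a suitable application of Bishop--Gromov \eqref{eq:bishop-gromov}.

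First I would fix an arbitrary reference point $z_0 \in M$ and observe that the triangle inequality gives the inclusion $B_r(x) \subset B_{r+\sfd(x,z_0)}(z_0)$, so that $\frm(B_r(x)) \leq \frm(B_{r+\sfd(x,z_0)}(z_0))$. In the case $\kappa > 0$, Bonnet--Myers (available under $\CD(\kappa,N)$ with $N<+\infty$ and $\kappa>0$) implies $M$ has finite diameter, hence is compact by Hopf--Rinow, and $\frm(M) < +\infty$; then $(\log \frm(B_r(\cdot)))^+ \leq (\log \frm(M))^+$ is uniformly bounded and the conclusion is immediate. In the case $\kappa \leq 0$ one applies \eqref{eq:bishop-gromov} at the point $z_0$ with radii $r$ and $R = r + \sfd(x,z_0)$ to get
\[
\frm\bigl(B_{r+\sfd(x,z_0)}(z_0)\bigr) \leq \frac{\frm(B_r(z_0))}{\phi(r, r+\sfd(x,z_0))},
\]
where $\phi(r,R)$ is the ratio of integrals on the right-hand side of \eqref{eq:bishop-gromov}. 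For $\kappa = 0$ one has $\phi(r,R) = (r/R)^N$, whereas for $\kappa < 0$ a direct estimate of the $\sinh$-integral ratio shows $-\log\phi(r,R) \leq c_1 + c_2 R$ with constants depending only on $r,\kappa,N$. Note that $\frm(B_r(z_0))$ is finite because $\overline{B_r(z_0)}$ is compact by Hopf--Rinow and $\frm = e^{-U}\,{\rm vol}$ with $U \in C^2$.

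Combining these inequalities and taking logarithms yields
\[
\log \frm(B_r(x)) \leq \log \frm(B_r(z_0)) - \log\phi\bigl(r, r+\sfd(x,z_0)\bigr) \leq C_1 + C_2\,\sfd(x,z_0)
\]
for suitable $C_1 \in \R$ and $C_2 \geq 0$ depending only on $r, \kappa, N$ and $z_0$. Passing to the positive part and integrating against $\mu \in \cP_1(M)$ gives
\[
\int \bigl(\log\frm(B_r(x))\bigr)^+ \De\mu(x) \leq C_1^+ + C_2\,M_1(\mu) < +\infty,
\]
which is the claim. The only genuinely delicate point is the elementary but slightly tedious verification that $-\log \phi(r,R)$ grows at most linearly in $R$ for $\kappa < 0$ (it grows logarithmically for $\kappa = 0$); everything else is routine.
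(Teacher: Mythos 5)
Your proposal is correct and follows essentially the same route as the paper: both use the inclusion $B_r(x)\subset B_{r+\sfd(x,z_0)}(z_0)$ together with the Bishop--Gromov inequality to obtain a bound of the form $\log\frm(B_r(x))\leq C_1+C_2\,\sfd(x,z_0)$, and then integrate against $\mu$ using $M_1(\mu)<+\infty$. Your explicit handling of $\kappa>0$ via Bonnet--Myers and of $\kappa=0$ via the polynomial volume ratio is a slightly more careful spelling-out of what the paper dismisses as working ``verbatim,'' but it is the same argument.
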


\begin{proof}
Assume $\kappa < 0$. If we fix $\bar{x} \in M$ and observe that $B_r(x) \subset B_{\sfd(x,\bar{x})+r}(\bar{x})$, then by \eqref{eq:bishop-gromov}
\[
\frm(B_r(x)) \leq C_1\frm(B_r(\bar{x}))\int_0^{\sfd(x,\bar{x})+r}\sinh(t\sqrt{-\kappa/(N-1)})^{N-1}\De t \leq C_2\,e^{C_3\sfd(x,\bar{x})},
\]
where $C_1,C_3$ only depend on $\kappa,N,r$ and $C_2$ also depends on $\bar{x}$. As a consequence,
\begin{equation}\label{remappo2}
\log\frm(B_r(x)) \leq \log C_2 + C_3\sfd(x,\bar{x})
\end{equation}
and therefore $\left(\log\frm\left(B_r(\cdot)\right)\right)^+\in L^1(\mu)$ for any $\mu\in\cP_1(M)$.  
In the case $\kappa \geq 0$, the argument works verbatim, the only difference being in the application of the Bishop-Gromov inequality.
\end{proof}

At this stage, let us also recall the definition of the relative entropy when the reference $\frm$ is not a probability measure (see \cite[Section 2]{conforti2021formula} and \cite[Appendix A]{LeoSch} for more details). As $\frm$ is $\sigma$-finite, there exists a measurable function $W\colon M\to [0,+\oo)$ such that
\[z_W\coloneqq \int e^{-W}\,\De\frm<+\oo\,.\]
Then, for any $\frp\in\cP(M)$ such that $W\in L^1(\frp)$ we can define the relative entropy with respect to $\frm$ as
\begin{equation}\label{def:entropy}
\cH(\frp|\frm)\coloneqq \cH(\frp|\frm_W)-\int W\,\De\frp-\log z_W\,,
\end{equation}
where $\frm_W\coloneqq z_W^{-1}\,e^{-W}\,\frm\in\cP(M)$ and $\cH(\frp|\frm_W)$ is defined via the standard definition of relative entropy between probabilities, given in the introduction. Moreover, it is immediate to see that the above definition is independent of the choice of the measurable function $W\colon M\to [0,+\oo)$ satisfying $z_W<+\oo$. In particular, thanks to \cite[Theorem 4.24]{Sturm06I}, $W$ can be chosen as $C\sfd^2(\cdot,x_0)$ for any $x_0 \in M$ and $C>0$ sufficiently large, whence the lower semicontinuity of $\mathcal{H}(\cdot|\frm)$ w.r.t.\ $W_2$-convergence. 
We are now ready to prove the main result of this subsection, about the existence of an $fg$-decomposition as in \eqref{eq:fg-decomposition} for the unique minimizer in \eqref{SP}.

\begin{prop}\label{prop:fg}
Let $(M,\sfd,\frm)$ satisfy \eqref{CD} and suppose the marginals $\mu,\,\nu$ satisfy \eqref{H1}. Then \eqref{SP} admits a unique minimizer $\schrplan{\mu}{\nu}{T}\in\Pi(\mu,\nu)$ and there are two non-negative measurable functions $f,\,g$ satisfying \eqref{eq:fg-decomposition}, i.e.,
\[
\frac{\De\schrplan{\mu}{\nu}{T}}{\De\rmR_{0,T}}(x,y)=f(x)\,g(y)\quad\rmR_{0,T}\text{-a.s.}
\]
They are $\frm$-a.e.\ unique up to the trivial transformation $(f,g) \mapsto (cf,g/c)$ for some $c>0$
and moreover $\varphi=\log f\in L^1(\mu)$, $\psi=\log g \in L^1(\nu)$. Finally, $f$ and $g$ satisfy the Schr\"odinger system
\begin{equation}\label{schr:system}\begin{cases}
\mu=f\,P_T g\,\frm\,,\\
\nu=P_Tf\, g\,\frm\,.
\end{cases}\end{equation}
\end{prop}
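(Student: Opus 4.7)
My plan is to reduce the proposition to a classical structural theorem for the static Schr\"odinger problem once finiteness and attainment of the infimum in \eqref{SP} have been established. Concretely, I would first show that $\mu\otimes\nu$ is an admissible competitor with $\cH(\mu\otimes\nu|\rmR_{0,T})<\infty$, so that $\cC_T(\mu,\nu)$ is finite. The direct method of the calculus of variations, combined with strict convexity of $\cH(\cdot|\rmR_{0,T})$ on its finiteness domain, would then yield the existence and uniqueness of $\schrplan{\mu}{\nu}{T}$. Finally, the $fg$-decomposition, the system \eqref{schr:system} and the $L^1$-integrability of $\log f,\log g$ are obtained from standard arguments.

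The main finiteness computation reads, using $\mu,\nu\ll\frm$,
\begin{equation*}
\cH(\mu\otimes\nu|\rmR_{0,T}) = \cH(\mu|\frm)+\cH(\nu|\frm) - \int \log \hp_T(x,y)\,\De\mu(x)\De\nu(y),
\end{equation*}
so the task reduces to controlling $\int(-\log\hp_T)^+ \De\mu\,\De\nu$. Under $\CD(\kappa,N)$ with $N<\infty$, the Gaussian lower bound \eqref{eq:gaussian-estimates} gives
\begin{equation*}
-\log\hp_T(x,y)\leq \log C_1 + \log\frm\bigl(B_{\sqrt{T}}(x)\bigr) + \frac{\sfd^2(x,y)}{(4-\delta)T} + C_2 T,
\end{equation*}
and the positive part of $\log\frm(B_{\sqrt{T}}(\cdot))$ is $\mu$-integrable by Lemma \ref{lemma:log:balls}, while $\sfd^2(x,y)\leq 2\sfd^2(x,x_0)+2\sfd^2(y,x_0)$ is $\mu\otimes\nu$-integrable as $\mu,\nu\in\cP_2(M)$. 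Under $\CD(\kappa,\infty)$ with $\frm(M)=1$, \eqref{eq:gaussian-lower} yields the same conclusion even more directly.

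For existence I would invoke tightness of $\Pi(\mu,\nu)$ (automatic since $\mu,\nu$ are tight on a Polish space) together with lower semicontinuity of $\pi\mapsto\cH(\pi|\rmR_{0,T})$ with respect to weak convergence, obtained by rewriting the entropy via \eqref{def:entropy} with $W(x,y):=C(\sfd^2(x,x_0)+\sfd^2(y,x_0))$ for $C$ large enough, so that $z_W<\infty$ by \cite[Theorem 4.24]{Sturm06I} and the resulting renormalized reference is a probability measure. Strict convexity of $\cH(\cdot|\rmR_{0,T})$ on its finiteness domain then yields uniqueness of $\schrplan{\mu}{\nu}{T}$.

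The product structure \eqref{eq:fg-decomposition} is the classical factorization of minimizers of static Schr\"odinger problems with finite value (\`a la Csisz\'ar and R\"uschendorf--Thomsen; see also L\'eonard's survey): once $\cC_T(\mu,\nu)<\infty$, the unique optimizer admits a representation $\De\schrplan{\mu}{\nu}{T}/\De\rmR_{0,T}=f\otimes g$ with $f,g$ non-negative measurable, unique $\frm$-a.e.\ up to $(f,g)\mapsto(cf,g/c)$. Integrating this identity in one variable against $\rmR_{0,T}=\hp_T\,\De(\frm\otimes\frm)$ produces the Schr\"odinger system, e.g.
\begin{equation*}
\mu(A) = \int_A f(x)\Bigl(\int g(y)\hp_T(x,y)\,\De\frm(y)\Bigr)\De\frm(x) = \int_A f(x)P_Tg(x)\,\De\frm(x),
\end{equation*}
and symmetrically for $\nu$. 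Finally, the identity $\int\log f\,\De\mu+\int\log g\,\De\nu=\cH(\schrplan{\mu}{\nu}{T}|\rmR_{0,T})<\infty$ gives the sum of the two integrals, while the individual $L^1$-control of $\log f$ and $\log g$ can be recovered via $\log f=\log(\De\mu/\De\frm)-\log P_Tg$, combining $\cH(\mu|\frm)<\infty$ with Jensen's inequality and the heat-kernel bounds \eqref{eq:gaussian-estimates}--\eqref{eq:gaussian-lower} to control $(\log P_Tg)^-$. I expect this last step, i.e.\ passing from joint to individual $L^1$-integrability of the potentials, to be the most delicate point; the cleanest alternative is to invoke the dual formulation of SP, whose maximizers belong by construction to $L^1(\mu)\times L^1(\nu)$ and can be identified with $(\log f,\log g)$ up to the scaling $(f,g)\mapsto(cf,g/c)$.
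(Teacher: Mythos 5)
Your proposal follows essentially the same route as the paper: the core computation is identical (finiteness of $\cH(\mu\otimes\nu|\rmR_{0,T})$ via the Gaussian lower bounds \eqref{eq:gaussian-estimates}--\eqref{eq:gaussian-lower}, Lemma \ref{lemma:log:balls} and the second moments from \eqref{H1}), after which existence, uniqueness, the $fg$-decomposition and the system \eqref{schr:system} are imported from the literature, exactly as the paper does by citing \cite[Proposition 2.1]{GigTam21}. One caveat on the final step: your first suggested route for the individual integrability of $\log f$, namely controlling $(\log P_Tg)^-$ via Jensen's inequality, is circular as sketched, since the natural Jensen bound $\log P_Tg(x)\geq\int\log g\,\De\nu+\int\log\hp_T(x,\cdot)\,\De\nu$ already presupposes $\int\log g\,\De\nu>-\infty$, which is part of what must be proved. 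Your fallback via the dual/structural characterization is the correct one and is what the paper actually does, modulo the technical device of replacing $\rmR_{0,T}$ by the probability reference $\rmR_{0,T}^\mu(\De x,\De y)=\hp_T(x,y)\mu(\De x)\frm(\De y)$ so that \cite[Corollary 1.13]{Marcel:notes} applies.
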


Since the $fg$-decomposition is unique up to a scalar multiplicative factor, through the whole paper we are going to assume the following symmetric normalization
\begin{equation}\label{gia:renormalization}
\begin{aligned}\int\log f\,\De\mu-\cH(\mu|\frm)=&\int\log g\,\De\nu-\cH(\nu|\frm)\\
 =&\frac12\left[\cC_T(\mu,\nu)-\cH(\mu|\frm)-\cH(\nu|\frm)\right]=\frac{\cS_T(\mu,\nu)}{2T}\,.
\end{aligned}
\end{equation}

\begin{proof}
The existence and uniqueness of $\schrplan{\mu}{\nu}{T}\in\Pi(\mu,\nu)$ and $f,g$, as well as the validity of the Schr\"odinger system \eqref{schr:system} follow from \cite[Proposition 2.1]{GigTam21} once we establish that $\cH(\mu\otimes\nu|\rmR_{0,T})$ is finite.
In order to prove that, let us derive a lower bound for $\log \hp_T$.
First, assume that $(M,\,\sfd,\,\frm)$ satisfies $\CD(\kappa,\oo)$ for some $\kappa\in\R$ and $\frm(M)=1$. From \eqref{eq:gaussian-lower} we get the lower bound 
\begin{equation}\label{eq:gaussian-lower-bound}
\log \hp_T(x,y)\geq -\frac{\kappa \,\sfd^2(x,y)}{2(e^{\kappa \,T}-1)}\,,
\end{equation}
which, combined with the trivial inequalities $\sfd(x,y)^2\leq 2(\sfd^2(x,z)+\sfd^2(y,z))$ valid for any $z\in M$ and $e^{\kappa\,T}-1\geq \kappa \,T$, implies
\begin{equation}\label{infinity:log:bound}
    \log \hp_T(x,y)\geq -\frac{\sfd^2(x,z)}{T}-\frac{\sfd^2(y,z)}{T}\,.
\end{equation}
On the other hand, if $\CD(\kappa,N)$ holds with $N<+\oo$, then from the heat kernel lower bound \eqref{eq:gaussian-estimates} we obtain
 \begin{equation}\label{N:log:bound}
     \log \hp_T(x,y)\geq -\log\left[C_1\, \frm\left(B_{\sqrt{T}}(x)\right)\right]-C_2\,T-\frac{\sfd^2(x,z)}{T}-\frac{\sfd^2(y,z)}{T}\,.
 \end{equation}
In both cases, thanks to Assumption \eqref{H1} and Lemma \ref{lemma:log:balls} with $r=\sqrt{T}$ we conclude that $-\int\log \hp_T\,\De(\mu\otimes\nu)\in[-\oo,+\oo)$, which leads to the well-defined summation
 \begin{equation}\label{remappo}\cH(\mu\otimes\nu|\rmR_{0,T})=\cH(\mu|\frm)+\cH(\nu|\frm)-\int\log \hp_T\,\De(\mu\otimes\nu)<+\oo\,.\end{equation}
 
Finally, by arguing as in \eqref{remappo}, we know that $\cH(\mu\otimes\nu|\rmR_{0,T}^\mu)$ is also finite, where $\rmR_{0,T}^\mu\in\cP(\mathbb{R}^{2d})$ is the probability measure defined as $\rmR_{0,T}^\mu(\De x,\De y)\coloneqq \hp_T(x,y)\mu(\De x)\frm(\De y)$. Since for any $\pi\in\Pi(\mu,\nu)$ it holds
\[\cH(\pi|\rmR_{0,T})=\cH(\mu|\frm)+\cH(\pi|\rmR_{0,T}^\mu)\,,\]
clearly $\schrplan{\mu}{\nu}{T}$ minimizes $\cH(\cdot|\rmR_{0,T}^\mu)$ as well, over the set of couplings $\Pi(\mu,\nu)$. Given the above premises, the integrability of $\varphi$ and $\psi$ follows from \cite[Corollary 1.13 and Remark 2.22]{Marcel:notes} applied to the independent coupling $\mu\otimes\nu$ and to the probability reference $\rmR_{0,T}^\mu$.
\end{proof}

\subsection{Corrector estimates}\label{sec:corr:est}

We are going to prove the corrector estimates~\eqref{corr:est} via an approximation argument. First we will assume that the marginals have compact support and bounded densities, then we will extend it to the case in which $f,g\in L^\oo(\frm)$, and finally to the case when the second condition in \eqref{H2} is satisfied by relying on a finite-dimensional approximation argument where the second marginal is fixed while the first marginal constraint is replaced by a finite-dimensional one. More precisely, since $(M,\sfd)$ is separable we know that there exists a countable dense family of bounded measurable functions $\{\phi_i\}_{i\in\N}$ such that
\[
(\mathrm{proj}_{x_1})_{\#}\pi=\mu \quad \Leftrightarrow \quad \int \phi_i(x)\De\pi(x,y)=\int \phi_i\De\mu \qquad\forall\,i\in\N\,.
\]
Therefore for any fixed $K\in\N$ we may define the convex and closed (in total variation) set
\[\cQ^\nu_K\coloneqq \biggl\{\pi\in\cP(M\times M)\,\colon (\mathrm{proj}_{x_2})_{\#}\pi=\nu\text{ and  } \int \phi_i(x)\De\pi(x,y)=\int\phi_i\De\mu\quad\forall \,i=1,\dots, K  \biggr\}\]
and then consider the associated minimization problem
\begin{equation}\label{half:schr}
\inf_{\pi\in\cQ^\nu_K}\cH(\pi|\rmR_{0,T})\,.
\end{equation}
The next lemma, whose proof is postponed to the end of this section, shows that the above problem gives an approximation of SP.

\begin{lemma}\label{lemma:half:schr}
Let $(M,\sfd, \frm)$ satisfy \eqref{CD} and suppose that the marginals $\mu,\,\nu$ satisfy \eqref{H1}. Then the above minimiziation problem \eqref{half:schr} admits a unique optimizer $\pi^K\in \cQ^\nu_K$, whose density is given by
\begin{equation}\label{pi:K:fg}
\frac{\De\pi^K}{\De\rmR_{0,T}}(x,y)=f_K(x)\,g_K(y)\qquad\text{with }\,f_K=C\,\exp\left(\sum_{i=1}^K\lambda_i\,\phi_i\right)\in L^\oo(\frm)
\end{equation}
for some constant $C>0$ and  multipliers  $\lambda_i\in\R$.
Moreover, $g_K$ converges $\frm$-a.e.\ to $g$ and $ P_T f_K$ converges $\frm$-a.e.\ to $ P_T f$ on $\supp(\nu)$. Finally, the optimal value $\cH(\pi^K|\rmR_{0,T})$ converges to $\cC_T(\mu,\nu)$ as $K \to \infty$.
\end{lemma}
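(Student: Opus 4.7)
The plan is to handle the four assertions sequentially: existence and uniqueness of $\pi^K$, the factorized structure of its density, convergence of optimal values, and finally $\frm$-a.e.\ convergence of $g_K$ and of $P_Tf_K$. The last step is the main obstacle.

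\emph{Existence, uniqueness, and factorized structure.} Since $\schrplan{\mu}{\nu}{T}\in\cQ^\nu_K$, the infimum in~\eqref{half:schr} is bounded above by $\cC_T(\mu,\nu)<+\oo$ thanks to Proposition~\ref{prop:fg}. The set $\cQ^\nu_K$ is convex and narrowly closed: the fixed $y$-marginal constraint is narrowly closed, and the $K$ scalar constraints are closed since the $\phi_i$ are bounded measurable. Narrow lower semi-continuity of $\cH(\cdot|\rmR_{0,T})$, via~\eqref{def:entropy} with $W=C\sfd^2(\cdot,x_0)$, together with its strict convexity and with tightness of minimizing sequences (trivial on the $y$-side, and on the $x$-side obtained from the bounded entropy combined with the Gaussian-type lower bounds~\eqref{infinity:log:bound}--\eqref{N:log:bound} and Lemma~\ref{lemma:log:balls}) yields existence and uniqueness of the minimizer $\pi^K$. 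The factorization of its density follows from Csisz\'ar's $I$-projection theorem applied to the $K$ scalar linear constraints together with the infinite-dimensional marginal constraint: the Lagrange multipliers of the former supply $f_K(x)=C\exp(\sum_{i=1}^K\lambda_i\phi_i(x))$ for suitable $C>0$ and $\lambda_i\in\R$, while the dual multiplier associated with the $\nu$-constraint supplies a measurable $g_K\colon M\to[0,+\oo)$. Boundedness of the $\phi_i$ then gives $f_K\in L^\oo(\frm)$.

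\emph{Convergence of the optimal values.} Because $\cQ^\nu_K\supseteq\cQ^\nu_{K+1}\supseteq\Pi(\mu,\nu)$, the sequence $\cH(\pi^K|\rmR_{0,T})$ is non-decreasing in $K$ and bounded above by $\cC_T(\mu,\nu)$; denote its limit by $L$. The same tightness argument as above shows that $(\pi^K)_K$ admits a narrow limit point $\pi^\infty$, and stability of the constraints under narrow convergence gives $\pi^\infty_y=\nu$ together with $\int\phi_i\De\pi^\infty=\int\phi_i\De\mu$ for every $i\in\N$; the density of $\{\phi_i\}$ then forces $\pi^\infty_x=\mu$, so $\pi^\infty\in\Pi(\mu,\nu)$. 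Lower semi-continuity gives $\cH(\pi^\infty|\rmR_{0,T})\leq L\leq\cC_T(\mu,\nu)$, so by uniqueness of the Schr\"odinger optimizer $\pi^\infty=\schrplan{\mu}{\nu}{T}$, $L=\cC_T(\mu,\nu)$, and the whole sequence $\pi^K$ converges narrowly to $\schrplan{\mu}{\nu}{T}$.

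\emph{Pointwise convergence of the densities.} From $\pi^K_y=\nu$ and the factorization one reads $\nu=(P_Tf_K)\,g_K\,\frm$, so $g_K=(\De\nu/\De\frm)/P_Tf_K$ on $\supp(\nu)$, and similarly $g=(\De\nu/\De\frm)/P_Tf$ there; it is therefore enough to show $P_Tf_K\to P_Tf$ $\frm$-a.e.\ on $\supp(\nu)$. Fixing the symmetric normalization~\eqref{gia:renormalization} for $\pi^K$ and $\schrplan{\mu}{\nu}{T}$ removes the multiplicative ambiguity in the $fg$-decomposition. Combining the narrow convergence $\pi^K\rightharpoonup\schrplan{\mu}{\nu}{T}$ with the entropy convergence from the previous step upgrades, by standard variational arguments, to convergence in total variation, hence $\rmR_{0,T}$-a.e.\ convergence of $f_Kg_K$ to $fg$ along a subsequence. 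The uniqueness of the $fg$-decomposition in Proposition~\ref{prop:fg} together with the fixed normalization then identifies the factors, giving $f_K\to f$ and $g_K\to g$ $\frm$-a.e. Writing $P_Tf_K(y)=\int f_K(x)\hp_T(x,y)\De\frm(x)$, one concludes by dominated convergence, using that the constraint $\nu=(P_Tf_K)g_K\frm$ supplies uniform integrability of $f_K(\cdot)\hp_T(\cdot,y)$ against $\frm$ for $\frm$-a.e.\ $y\in\supp(\nu)$.

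The central difficulty is the final paragraph. The trivial $L^\oo$-bound on $f_K$ coming from $\sum_i|\lambda_i|\|\phi_i\|_\oo$ depends on $K$ and does not translate into a $K$-uniform dominating function when acted on by $P_T$. The proof must extract the required uniform integrability from the constraints themselves, namely from the marginal identity $\nu=(P_Tf_K)g_K\frm$ combined with the heat kernel lower bounds~\eqref{infinity:log:bound}--\eqref{N:log:bound} and the integrability properties of $\frm$ furnished by Lemma~\ref{lemma:log:balls}; this is where the full strength of the curvature-dimension condition~\eqref{CD} enters the argument.
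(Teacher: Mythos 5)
There is a genuine gap in your final step, and it is a self-inflicted one. You correctly write down the marginal identity $\nu=(P_Tf_K)\,g_K\,\frm$, i.e.\ $P_Tf_K=\frac{\De\nu/\De\frm}{g_K}$ on $\supp(\nu)$, but then you use it only to reduce ``$g_K\to g$'' to ``$P_Tf_K\to P_Tf$'', and you try to prove the latter by pushing the a.e.\ convergence $f_K\to f$ through the integral $P_Tf_K(y)=\int f_K\hp_T(\cdot,y)\De\frm$ via dominated convergence. The uniform integrability you would need for that is exactly what you admit you cannot supply, so as written the proof is incomplete. But the interchange of limit and integral is never needed: once you know $g_K\to g$ $\frm$-a.e.\ (for a suitable normalization), the identity $P_Tf_K=\frac{\De\nu/\De\frm}{g_K}$ \emph{immediately} gives $P_Tf_K\to P_Tf$ $\frm$-a.e.\ on $\supp(\nu)$, because the second marginal $\nu$ is the same for every $K$. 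This is precisely how the paper concludes; the ``central difficulty'' you isolate is a phantom created by running the reduction in the wrong direction.

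The remaining issue is how you get $g_K\to g$ a.e.\ from the $\rmR_{0,T}$-a.e.\ convergence of the products $f_Kg_K\to fg$. You invoke ``uniqueness of the $fg$-decomposition together with the symmetric normalization \eqref{gia:renormalization}'', but the symmetric normalization is an integral condition (and for $\pi^K$ it would involve the varying marginal $\mu^K$, not $\mu$); transferring pointwise convergence of the products to the individual factors through it would again require uniform integrability of the log-potentials, which you do not have. The paper instead uses the Gaussian lower bounds to upgrade the convergence to $\frm\otimes\frm$-a.e., applies Fubini to select a point $x^{*}\in\mathrm{int}(\supp(\mu))$ with $f(x^{*})\neq 0$ whose section is $\frm$-conull, and then \emph{defines} the normalization of $(f_K,g_K)$ by $f_K(x^{*})=f(x^{*})$; the factor convergence $g_K\to g$ then falls out by division. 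Two smaller remarks: your appeal to narrow closedness of the constraint sets is shaky because the $\phi_i$ are only bounded measurable (the paper works with closedness in total variation and cites an existing $I$-projection result for existence, uniqueness, TV-convergence and convergence of the values); and the explicit exponential form of $f_K$ does not follow from a blanket appeal to Csisz\'ar's theorem with an infinite-dimensional marginal constraint --- the paper obtains it by disintegrating with respect to $y$, applying the finite-dimensional $I$-projection formula to each conditional, and then exploiting that the resulting expression for $f_K$ must be independent of $y$.
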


\begin{prop}\label{lemma:gia}
Let $(M,\sfd, \frm)$ satisfy \eqref{CD}, suppose the marginals $\mu,\,\nu$ satisfy \eqref{H1} and let $f,\,g$ be as in Proposition \ref{prop:fg} and satisfying the normalization \eqref{gia:renormalization}. Then, if $\frac{\De\nu}{\De\frm}$ is locally bounded away from $0$ on ${\rm int}(\supp(\nu))$ and $\nu(\partial\supp(\nu))=0$ it holds
\begin{equation}\label{eq:corrector:nu}
  \norm{\nabla  \log P_T f}_{L^2(\nu)}^2\leq \frac{1}{E_{2\kappa}(T)}\biggl[\cC_T(\mu,\nu)-\cH(\nu|\frm)\biggr]\,,
\end{equation}
where $E_{2\kappa}$ is defined as in \eqref{eq:curvature-factor}. In particular, it is part of the statement the fact that $\log P_T f \in W^{1,2}_{loc}({\rm int}(\supp(\nu)))$ with $|\nabla\log P_T f| \in L^2(\nu)$.

Similarly, if $\frac{\De\mu}{\De\frm}$ is locally bounded away from $0$ on ${\rm int}(\supp(\mu))$ and $\mu(\partial\supp(\mu))=0$, then $\log P_T g \in W^{1,2}_{loc}({\rm int}(\supp(\mu)))$ with $|\nabla\log P_T g| \in L^2(\mu)$ and it holds
\begin{equation}\label{eq:corrector:mu}
\norm{\nabla\log P_T g}_{L^2(\mu)}^2\leq \frac{1}{E_{2\kappa}(T)}\biggl[\cC_T(\mu,\nu)-\cH(\mu|\frm)\biggr]\,.
\end{equation}
In particular, both \eqref{eq:corrector:nu} and \eqref{eq:corrector:mu} hold true if the marginals satisfy \eqref{H2}.
\end{prop}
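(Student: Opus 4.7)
The plan is to deduce \eqref{eq:corrector:nu} from two ingredients: an energy representation of the entropic cost and a Bakry-\'Emery-type monotonicity along the Schr\"odinger interpolation. Set $\varphi^T_t:=\log P_t f$, $\psi^T_t:=\log P_{T-t}g$ and introduce the time-marginals $\mu^T_t:=P_t f\cdot P_{T-t} g\cdot \frm$ of the Schr\"odinger bridge, so that $\mu^T_0=\mu$, $\mu^T_T=\nu$ by~\eqref{schr:system}. Starting from the expression $\cC_T-\cH(\nu|\frm)=\int\log f\,\De\mu-\int\log P_T f\,\De\nu$ obtained from~\eqref{schr:system} and~\eqref{gia:renormalization}, the self-adjointness of $P_T$ with respect to $\frm$ rewrites it as $\int g\,\bigl[P_T(f\log f)-P_T f\log P_T f\bigr]\,\De\frm$; a standard de Bruijn-type computation,
\[
\frac{\De}{\De s}P_s\bigl(P_{T-s}f\,\log P_{T-s}f\bigr)=P_s\bigl(P_{T-s}f\,|\nabla\log P_{T-s}f|^2\bigr),
\]
combined with another use of the self-adjointness of $P_{T-s}$ finally yields the key identity
\begin{equation*}
\cC_T(\mu,\nu)-\cH(\nu|\frm) = \int_0^T \int |\nabla\varphi^T_s|^2\,\De\mu^T_s\,\De s.
\end{equation*}

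The second step is the monotonicity, under \eqref{CD},
\begin{equation*}
\frac{\De}{\De s}\int |\nabla\varphi^T_s|^2\,\De\mu^T_s \leq -2\kappa \int |\nabla\varphi^T_s|^2\,\De\mu^T_s,
\end{equation*}
which I would obtain by plugging the forward HJB $\partial_s\varphi^T_s = L\varphi^T_s + |\nabla\varphi^T_s|^2$ together with the transport equation $\partial_s \mu^T_s = \mathrm{div}_{\frm}\bigl(\mu^T_s(\nabla\varphi^T_s-\nabla\psi^T_s)\bigr)$ for the marginal (derivable from the backward HJB for $\psi^T_s$) into the time-derivative, integrating by parts, and invoking the Bakry-\'Emery inequality $\Gamma_2(\varphi)\geq \kappa|\nabla\varphi|^2$ that is equivalent to $\CD(\kappa,\infty)$. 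Combining the two steps, a Gr\"onwall argument from $s$ up to $T$ gives $\int |\nabla\varphi^T_s|^2\,\De\mu^T_s \geq e^{2\kappa(T-s)}\norm{\nabla\log P_T f}_{L^2(\nu)}^2$, and integrating in $s\in[0,T]$ produces the factor $E_{2\kappa}(T)$, yielding \eqref{eq:corrector:nu}. The companion estimate~\eqref{eq:corrector:mu} follows by symmetrising $\mu\leftrightarrow\nu$ and $f\leftrightarrow g$.

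Making the formal computations rigorous requires enough regularity of $f,g$, and for that I would proceed via the three-layer approximation suggested in the text. First, when $\mu,\nu$ have compactly supported bounded densities, the heat kernel lower bounds \eqref{eq:gaussian-estimates}--\eqref{eq:gaussian-lower} together with the Schr\"odinger system force $f,g\in L^\infty(\frm)$, and parabolic smoothing of $P_s$ makes the HJB and $\Gamma_2$-calculus above entirely legitimate. Second, assuming only $f,g\in L^\infty(\frm)$, Hamilton's gradient estimate \eqref{eq:hamilton} together with the $L^\infty$-Lipschitz regularisation \eqref{eq:linfty-lip-reg} provide the requisite local regularity of $\log P_t f,\log P_{T-t}g$ wherever $P_t f,P_{T-t}g$ are bounded below, allowing the same computations via a standard localisation. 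Third, in the full generality of~\eqref{H2}, I would apply the estimate already proved to the approximating optimizers $\pi^K$ of Lemma~\ref{lemma:half:schr}, whose $fg$-decomposition $(f_K,g_K)$ satisfies $f_K\in L^\infty(\frm)$ by construction, obtaining
\[
E_{2\kappa}(T)\,\norm{\nabla\log P_T f_K}_{L^2(\nu)}^2 \leq \cH(\pi^K|\rmR_{0,T}) - \cH(\nu|\frm),
\]
and then let $K\to\infty$ using $\cH(\pi^K|\rmR_{0,T})\to\cC_T(\mu,\nu)$ and the $\frm$-a.e.\ convergence $P_T f_K\to P_T f$ on $\supp(\nu)$ granted by the same lemma, together with weak lower semicontinuity in the gradient norm.

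The principal obstacle I anticipate is precisely this passage to the limit: the uniform corrector bound produces a weakly convergent subsequence of $\nabla\log P_T f_K$ in $L^2(\nu)$, but identifying its limit with $\nabla\log P_T f$ requires a Caccioppoli-type local energy estimate. Here the regularity assumptions on $\nu$ become essential: the condition $\nu(\partial\supp(\nu))=0$ permits restricting the $L^2(\nu)$-norm to $\mathrm{int}(\supp(\nu))$, while the local boundedness away from zero of $\De\nu/\De\frm$ combined with the $K$-level Schr\"odinger system prevents $P_T f_K$ from degenerating on compact subsets of $\mathrm{int}(\supp(\nu))$. This locally uniform lower bound on $P_T f_K$ upgrades the pointwise convergence of $P_T f_K$ to $W^{1,2}_{loc}$-convergence of $\log P_T f_K$, closing the argument and simultaneously establishing $\log P_T f\in W^{1,2}_{loc}(\mathrm{int}(\supp(\nu)))$ with the required gradient bound.
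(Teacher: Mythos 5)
Your proposal follows essentially the same route as the paper: the dynamical representation $\cC_T(\mu,\nu)-\cH(\nu|\frm)=\int_0^T\int|\nabla\log P_sf|^2\,\De\zeta^{\mu\to\nu,T}_s\,\De s$, the Bakry-\'Emery differential inequality $\alpha'\leq-2\kappa\alpha$ along the entropic interpolation combined with Gr\"onwall to produce the factor $E_{2\kappa}(T)$, and the same three-layer approximation (compact bounded marginals, then $f,g\in L^\infty(\frm)$, then the general case via Lemma~\ref{lemma:half:schr} with identification of the weak limit of the gradients through a local lower bound on $\De\nu/\De\frm$). The only detail you gloss over is that in the third layer Lemma~\ref{lemma:half:schr} guarantees $f_K\in L^\infty(\frm)$ but not $g_K$, so the second-layer result does not apply directly to $(f_K,g_K)$; the paper inserts an extra truncation $g_K^n=\min\{g_K,n\}$ and passes $n\to\infty$ first, a routine fix within your scheme.
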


\begin{proof}
We will only prove inequality \eqref{eq:corrector:nu}, as~\eqref{eq:corrector:mu} follows by completely analogous techniques.

As a preliminary step, let us assume that $\mu,\nu$ have bounded supports and bounded densities $\frac{\De\mu}{\De\frm},\,\frac{\De\nu}{\De\frm}$. Under this extra hypothesis, it holds
\begin{equation}\label{eq:gronwall}
\int\abs{\nabla\log P_Tf}^2\De\nu \leq e^{-2\kappa(T- t)}\int\abs{\nabla\log P_{t}f}^2\De\entint{\mu}{\nu,T}_t,
\end{equation}
where $\entint{\mu}{\nu,T}_t = P_t f\,P_{T-t}g\,\frm$ is the $T$-entropic interpolation from $\mu$ to $\nu$ at time $t$. If $(M,\sfd,\frm)$ satisfies $\CD(\kappa,N)$ with $N<\infty$, this is a consequence of Gr\"onwall lemma applied to the functions $\alpha(t) := \int\abs{\nabla\log P_{t}f}^2\De\entint{\mu}{\nu,T}_t$ and $\beta(t) := e^{2\kappa(T-t)}\int\abs{\nabla\log P_Tf}^2\De\nu$, as on the one hand by \cite[Lemmas 3.6 and 3.7]{conforti2019second} together with \cite[Proposition 4.8]{GigTam21} (which justifies the computations of \cite[Lemmas 3.6 and 3.7]{conforti2019second} in the non-compact and possibly negatively curved setting) we have
\begin{equation}\label{eq:gronwall2}
\alpha \in C^1((0,T]) \qquad \textrm{and} \qquad \alpha'(t) \leq -2\kappa\alpha(t),\,\forall t \in (0,T]
\end{equation}
while on the other hand it is readily verified that $\beta' = -2\kappa\beta$; since $\alpha(T)=\beta(T)$, it must hold $\alpha(t) \geq \beta(t)$ for all $t \in (0,T]$, namely \eqref{eq:gronwall}.

If instead $(M,\sfd,\frm)$ satisfies $\CD(\kappa,\infty)$ and $\frm(M)=1$, the reader should refer to \cite[Lemma 2.2]{conforti2021formula}: this grants the validity of \eqref{eq:gronwall2} for
\[
\alpha_\delta(t) := c_\delta\int |\nabla\log (P_t f+\delta)|^2\,(P_tf + \delta)(P_{T-t}g + \delta)\,\De\frm,
\]
where $\delta>0$ is any positive number and $c_\delta$ is a normalization constant, so that $c_\delta(P_tf + \delta)(P_{T-t}g + \delta)\frm$ is a probability measure. By considering $\beta_\delta(t) := e^{2\kappa(T-t)}\int\abs{\nabla\log (P_Tf+\delta)}^2(P_Tf + \delta)g\,\De\frm$, by the same argument as before we obtain $\alpha_\delta(t) \geq \beta_\delta(t)$ for all $t \in (0,T]$ and $\delta>0$, whence \eqref{eq:gronwall} by passing to the limit as $\delta \downarrow 0$ by the Dominated Convergence Theorem. Indeed, note first that
\[
\begin{split}
|\nabla\log (P_t f+\delta)|^2\,(P_tf + \delta)(P_{T-t}g + \delta) & = \frac{|\nabla P_t f|^2}{P_t f +\delta}(P_{T-t}g+\delta) \leq |\nabla\log P_t f|^2 P_t f(P_{T-t}g+\delta) \\
& \leq C(\kappa,t)(P_t f)|\log P_t f|(P_{T-t}g+1) 
\end{split}
\]
where the last inequality is due to Hamilton's gradient estimate \eqref{eq:hamilton}, $C(\kappa,t)$ being some constant that only depends on $\kappa,t$. Then, thanks to the standing assumptions on $\mu,\nu$ and the Gaussian heat kernel lower bound \eqref{eq:gaussian-lower-bound}, by \cite[Proposition 2.1]{GigTam21} we know that $f,g \in L^\infty(\frm)$. By the maximum principle this implies that the previous right-hand side is bounded, hence integrable as $\frm(M)=1$, and thus provides an admissible dominating function.

Now, by multiplying by $e^{2\kappa(T-t)}$ and integrating over $t\in[0,T]$ in \eqref{eq:gronwall} we deduce that
\[
\begin{split}
E_{2\kappa}(T)\int\abs{\nabla\log P_Tf}^2\De\nu 
& \leq \int_0^T\int \abs{\nabla\log P_{t}f}^2 \, \De\entint{\mu}{\nu,T}_t \De t = \cC_T(\mu,\nu)-\cH(\nu|\frm)\,,
\end{split}
\]
where the identity is justified by the following  representation of the entropic cost
\begin{equation}\label{eq:bbs}
\cC_T(\mu,\nu) = \cH(\nu|\frm) + \int_0^T\int \abs{\nabla\log P_{t}f}^2\,\De\entint{\mu}{\nu,T}_t \De t\,
\end{equation}
 (see \cite[Proposition 4.1]{GigTam19} paying attention to the different rescaling and Remark \ref{rmk:bbs} below).
 Inequality \eqref{eq:corrector:nu} is thus proved, as well as the last part of the statement.

Now let us remove the additional assumptions on $\mu,\nu$ in a two-steps procedure. 

\medskip

\noindent\textbf{1st step.} In addition to \eqref{H1}, assume that $\mu,\,\nu$ are such that the associated $f,g \in L^\infty(\frm)$. Then fix $\bar{x} \in M$ and introduce $f_n \coloneqq \mathds{1}_{B_n(\bar{x})}f$, $g_n \coloneqq \mathds{1}_{B_n(\bar{x})}g$, so that \eqref{eq:corrector:nu} holds true for
\[
\mu_n \coloneqq c_n f_n P_T g_n\,\frm \qquad \textrm{and} \qquad \nu_n \coloneqq c_n g_n P_T f_n\,\frm,
\]
where $c_n$ is a normalization constant (note that by self-adjointness of $P_T$ it is the same for both measures $\mu_n$ and $\nu_n$). Namely
\[
\int\abs{\nabla\log P_Tf_n}^2\De\nu_n \leq \frac{1}{E_{2\kappa}(T)}\,\biggl[\cC_T(\mu_n,\nu_n)-\cH(\nu_n|\frm)\biggr]\,.
\]
Observing that $\abs{\nabla\log P_Tf_n} \geq \abs{\nabla\log(P_Tf_n+\delta)}$ for any $\delta>0$, the inequality above implies in particular that
\begin{equation}\label{eq:approx-corrector}
\int\abs{\nabla \log(P_Tf_n+\delta)}^2\De\nu_n \leq \frac{1}{E_{2\kappa}(T)}\,\biggl[\cC_T(\mu_n,\nu_n)-\cH(\nu_n|\frm)\biggr]
\end{equation}
for all $\delta>0$. Let us now pass to the limit as $n \to \infty$. 

To this end, observe first that $P_t f_n \to P_t f$ and $P_T g_n \to P_T g$ pointwise as $n \to \infty$. Indeed
\[
\begin{split}
|P_T f_n(x)-P_T f(x)| & = \bigg|\int\big(\mathds{1}_{B_n(\bar{x})}(y)-1\big)f(y)\hp_T(x,y)\,\De\frm(y)\bigg| \\
& \leq \|f\|_{L^\infty}\int|\mathds{1}_{B_n(\bar{x})}(y)-1|\hp_T(x,y)\,\De\frm(y)
\end{split}
\]
and the right-hand side vanishes as $n \to \infty$ by dominated convergence. This allows to handle the right-hand side of \eqref{eq:approx-corrector} in the following way. As concerns the term $\cC_T(\mu_n,\nu_n)$, on the one hand, 
\[
\begin{split}
\int \log f_n\,\De\mu_n \leq \int \log f\,\De\mu_n & = \int(\log f)^+\,\De\mu_n - \int(\log f)^-\,\De\mu_n \\
& \leq c_n\int(\log f)^+\,\De\mu - \int(\log f)^-\,\De\mu_n,
\end{split}
\]
where we have used $f_n \leq f$ and, as a consequence of the maximum principle, also $P_T g_n \leq P_T g$. This implies that
\[
\begin{split}
\limsup_{n \to \infty}\int \log f_n\,\De\mu_n 
& \leq \int(\log f)^+\,\De\mu - \liminf_{n \to \infty}\int(\log f)^-\,\De\mu_n \\
& \leq \int(\log f)^+\,\De\mu - \int(\log f)^-\,\De\mu = \int\log f\,\De\mu,
\end{split}
\]
since $c_n \to 1$ and $f_n\,P_T g_n \to f\,P_T g$ pointwise (as already discussed before) together with Fatou lemma. On the other hand, one can show in an analogous fashion that
\[
\limsup_{n \to \infty}\int \log g_n\,\De\nu_n \leq \int \log g\,\De\nu.
\]
Since
\[
\cC_T(\mu_n,\nu_n) = \int\log f_n\,\De\mu_n + \int\log g_n\,\De\nu_n + \log c_n,
\]
we conclude that
\begin{equation}\label{eq:limsup-cost}
\limsup_{n \to \infty} \cC_T(\mu_n,\nu_n) \leq \cC_T(\mu,\nu).
\end{equation}
Secondly, as regards $\cH(\nu_n|\frm)$, note that $g_n P_T f_n \to g P_T f$ $\frm$-a.e.\ together with $g_n P_T f_n \leq g P_T f \in L^1(\frm)$ entails $g_n P_T f_n \to g P_T f$ in $L^1(\frm)$ by dominated convergence, whence $\nu_n \rightharpoonup \nu$. Moreover,
\[
\begin{split}
|M_2(\nu_n)-M_2(\nu)| & \leq \bigg|\int\sfd^2(\cdot,\bar{x})\,\De\nu_n - \int\sfd^2(\cdot,\bar{x})g_n P_T f_n\,\De\frm\bigg| \\
& \qquad + \bigg|\int\sfd^2(\cdot,\bar{x})g_n P_T f_n\,\De\frm - \int\sfd^2(\cdot,\bar{x})\,\De\nu\bigg| \\
& \leq (c_n-1)M_2(\nu) + \int\sfd^2(\cdot,\bar{x})\bigg|\frac{g_n P_T f_n}{g P_T f}-1\bigg|\,\De\nu \to 0, \quad \textrm{as } n \to \infty,
\end{split}
\]
again by $c_n \to 1$ and dominated convergence (recall indeed that $\nu \in \cP_2(M)$ by \eqref{H1} and $g_n P_T f_n \leq g P_T f$, so that $\frac{g_n P_T f_n}{g P_T f} \in [0,1]$). Therefore $\cW_2(\nu_n,\nu) \to 0$ and, by lower semicontinuity of the entropy w.r.t.\ $\cW_2$-convergence, 
\begin{equation}\label{eq:limsup-entropy}
\limsup_{n \to \infty} \Big(- \cH(\nu_n|\frm)\Big) \leq - \cH(\nu|\frm).
\end{equation}
Passing to the left-hand side of \eqref{eq:approx-corrector}, fix $k \in \N$ and note that for all $n \geq k$ we have
\[
\int\abs{\nabla \log(P_Tf_n+\delta)}^2\De\nu_n \geq \frac{1}{c_k}\int\abs{\nabla \log(P_Tf_n+\delta)}^2\De\nu_k = \frac{1}{c_k}\int_{\bar B_k(\bar{x})}\abs{\nabla \log(P_Tf_n+\delta)}^2\De\nu_k,
\]
since $c_n \geq 1$. We now claim that $|\nabla\log(P_Tf_n+\delta)| \rightharpoonup G$ in $L^2(\bar B_k(\bar{x}),\nu_k)$ for some $G$ such that $|\nabla\log(P_Tf+\delta)| \leq G$. To this end, observe that by the $L^\infty$-Lipschitz regularization \eqref{eq:linfty-lip-reg} it holds  $\abs{\nabla P_Tf_n} \leq C_{T,\kappa}\|f_n\|_{L^\infty(\frm)}$, where $C_{T,\kappa}$ only depends on $T$ and $\kappa$ given by \eqref{CD}. Since $f_n \leq f$, this implies
\[
\int_{\bar B_k(\bar{x})}\abs{\nabla \log(P_Tf_n+\delta)}^2\,\De\frm \leq \frac{C_{T,\kappa}^2}{\delta^2}\frm(B_k(\bar{x}))\|f\|^2_{L^\infty(\frm)}.
\]
The functions $(|\nabla\log(P_Tf_n+\delta)|)_{n \in \N}$ are thus equi-bounded in $L^2(\bar B_k(\bar{x}),\frm)$ and this implies that, up to subsequences, they converge to some function $G \in L^2(\bar B_k(\bar{x}),\frm)$ weakly in $L^2(\bar B_k(\bar{x}),\frm)$. Since $\log(P_Tf_n+\delta)$ converges $\frm$-a.e.\ to $\log(P_Tf+\delta)$, by \cite[Lemma 4.3(b)]{AGS14} $|\nabla\log(P_Tf+\delta)| \leq G$. As for $|\nabla\log(P_Tf_n+\delta)| \rightharpoonup G$ in $L^2(\bar B_k(\bar{x}),\nu_k)$, this directly follows from the weak convergence in $L^2(\bar B_k(\bar{x}),\frm)$ and the fact that, under the current assumptions, $\frac{\De\nu}{\De\frm} \in L^\infty(\frm)$. Combining the claim with the lower semicontinuity of the $L^2(\nu_k)$-norm w.r.t.\ weak convergence we obtain
\[
\liminf_{n \to \infty}\int_{\bar B_k(\bar{x})}\abs{\nabla \log(P_Tf_n+\delta)}^2\De\nu_k\, \geq \int_{\bar B_k(\bar{x})}G^2\De\nu_k \geq \int_{\bar B_k(\bar{x})}\abs{\nabla \log(P_Tf+\delta)}^2\De\nu_k.
\]
From this inequality, \eqref{eq:limsup-cost} and \eqref{eq:limsup-entropy} we end up with
\[
\frac{1}{c_k}\int_{\bar B_k(\bar{x})}\abs{\nabla \log(P_Tf+\delta)}^2\De\nu_k \leq \frac{1}{E_{2\kappa}(T)}\,\biggl[\cC_T(\mu,\nu)-\cH(\nu|\frm)\biggr]
\]
and it is now sufficient to let first $k \to \infty$ and then $\delta \downarrow 0$. In both cases, the monotone convergence theorem (and the fact that $c_k \to 1$) allows to handle the left-hand side and finally get the validity of \eqref{eq:corrector:nu} for $\mu,\nu$.

\medskip

\noindent\textbf{2nd step.} Now let $\mu,\nu$ be as in \eqref{H1} and assume that $\frac{\De\nu}{\De\frm}$ is locally bounded away from $0$ on ${\rm int}(\supp(\nu))$ and $\nu(\partial\supp(\nu))=0$. Fix $K\in\N$, let $f_K,g_K$ be defined as in Lemma \ref{lemma:half:schr} and let $\mu_K\coloneqq (\mathrm{proj}_{x_1})_{\#}\pi^K$ be the first marginal of the optimizer $\pi^K$ associated to \eqref{half:schr}. Since this approximation guarantees us only that $f_K\in L^\oo(\frm)$, let us fix $n\in\N$ and define $g_K^n\coloneqq\min\{g_K,n\}\in L^\oo(\frm)$ so that the previous step applies to the marginals
\[
\mu^K_n \coloneqq c_{K,n} f_K P_T g_K^n\,\frm \qquad \textrm{and} \qquad \nu^K_n \coloneqq c_{K,n} g_K^n P_T f_K\,\frm,
\]
where $c_{K,n}$ is a normalization constant (again, this is the same for both $\mu_n^K$ and $\nu_n^K$). Henceforth, in this case \eqref{eq:corrector:nu} reads as
\[
\int\abs{\nabla\log P_T f_K}^2\,\De\nu_n^K\leq \frac{1}{E_{2\kappa}(T)}\biggl[\cC_T(\mu^K_n,\nu^K_n)-\cH(\nu_n^K|\frm)\biggr]\,.
\]
Owing to algebraic manipulations, the normalizing constant $c_{K,n}$ can be neglected and therefore it holds
\begin{align*}
\int\abs{\nabla\log P_T f_K}^2\,g^n_K P_Tf_K\,\De\frm\leq \frac{1}{E_{2\kappa}(T)}\biggl[\int f_K\log f_K\,P_T g^n_K\,\De\frm-\int P_T f_K\log (P_T f_K)\,g_K^n\,\De\frm\biggr]\,.
\end{align*}
Since by Lemma \ref{lemma:half:schr} $f_K$ is bounded away from 0 and $\infty$, it follows that $\log f_K\in L^1(\mu^K)$ and $\log(P_T f_K)\in L^1(\nu)$. From this, by applying Fatou lemma to the left-hand side and the Dominated Convergence Theorem to the right-hand one, in the limit as $n\to\oo$ we have
\begin{equation*}
    \int\abs{\nabla\log P_T f_K}^2\,g_K P_Tf_K\,\De\frm
\leq \frac{1}{E_{2\kappa}(T)}\biggl[\int f_K\log f_K\,P_T g_K\,\De\frm-\int P_T f_K\log( P_T f_K)\,g_K\,\De\frm\biggr]\,,
\end{equation*}
which is the corrector estimate \eqref{eq:corrector:nu} associated to $\cC_T(\mu^K,\nu)$:
\begin{equation}\label{eq:seattle}
    \int\abs{\nabla\log P_T f_K}^2\,\De\nu
\leq \frac{1}{E_{2\kappa}(T)}\biggl[\cC_T(\mu^K,\nu)-\cH(\nu|\frm)\biggr]\,.
\end{equation}
We wish now to take the limit as $K\to\oo$. However this point is more subtle, since we are interested in proving that $\nabla\log P_T f$ is well defined also in the limit. At this stage, let us stress out that the extra-assumption on the density $\frac{\De\nu}{\De\frm}$ and on $\supp(\nu)$ has never been used and it is just needed in the following discussion. First, notice that the right-hand side above is uniformly bounded in $K\in\N$ since Lemma \ref{lemma:half:schr} ensures that $\cC_T(\mu^K,\nu)=\cH(\pi^K|\rmR_{0,T})$ converges to $\cC_T(\mu,\nu)$. 
As concerns the left-hand side, fix $\bar{x} \in \mathrm{int} (\supp(\nu))$. By assumption there exist $r,\alpha>0$ such that $\bar B_r(\bar{x}) \subset \supp(\nu)$ and $\frac{\De\nu}{\De\frm} \geq \alpha$ $\frm$-a.e.\ in $\bar B_r(\bar{x})$, so that, for any fixed $\delta>0$ it holds
\[
\int\abs{\nabla\log P_Tf_K}^2\De\nu \geq \int\abs{\nabla \log(P_Tf_K+\delta)}^2\De\nu \geq \alpha \int_{\bar B_r(\bar{x})}\abs{\nabla \log(P_Tf_K+\delta)}^2\,\De\frm\,.
\]
As a byproduct of these bounds we have
\[
\limsup_{K \to \infty} \int_{\bar B_r(\bar{x})}\abs{\nabla \log(P_Tf_K+\delta)}^2\,\De\frm \leq \alpha^{-1}\,E_{2\kappa}(T)^{-1}\,\biggl[\cC_T(\mu,\nu)-\cH(\nu|\frm)\biggr] < +\infty\,.
\]
The functions $(|\nabla\log(P_Tf_K+\delta)|)_{K \in \N}$ are thus equi-bounded in $L^2(\bar B_r(\bar{x}),\frm)$ and this implies that, up to subsequences, they converge to some function $G_{\bar{x}} \in L^2(\bar B_r(\bar{x}),\frm)$ weakly in $L^2(\bar B_r(\bar{x}),\frm)$. Moreover, from Lemma \ref{lemma:half:schr} we also have that $\log(P_Tf_K+\delta)$ converges $\frm$-a.e.\ to $\log(P_Tf+\delta)$ in $\bar B_r(\bar{x})$. Therefore, relying again on \cite[Lemma 4.3(b)]{AGS14} we conclude that $|\nabla\log(P_Tf+\delta)| \leq G_{\bar{x}}$ on $\bar B_r(\bar{x})$. In particular, it is worth stressing that in this case \cite[Lemma 4.3(b)]{AGS14} also ensures that $\log(P_Tf+\delta) \in W^{1,2}(\bar B_r(\bar{x}))$, which does not follow from the regularizing effect of $P_T$, because of the possible lack of integrability of $f$.

To replicate the proof given in the previous step we also need that $|\nabla\log(P_Tf_K+\delta)| \rightharpoonup G_{\bar{x}}$ in $L^2(\bar B_r(\bar{x}),\nu)$, but this may fail, as $\frac{\De\nu}{\De\frm}$ needs not belong to $L^\infty(\frm)$. For this reason, let us introduce $[\nu]_N \coloneqq \min\{g_K P_T f_K,N\}\frm$ for $N \in \N$, so that the left-hand side of \eqref{eq:seattle} can be trivially estimated from below as
\[
\int\abs{\nabla\log P_T f_K}^2\,\De[\nu]_N
\leq \int\abs{\nabla\log P_T f_K}^2\,\De\nu \leq \frac{1}{E_{2\kappa}(T)}\biggl[\cC_T(\mu^K,\nu)-\cH(\nu|\frm)\biggr]\,.
\]
Since now $\frac{\De[\nu]_N}{\De\frm} \in L^\infty(\frm)$ by construction, $|\nabla\log(P_Tf_K+\delta)| \rightharpoonup G_{\bar{x}}$ in $L^2(\bar B_r(\bar{x}),\frm)$ implies the weak convergence towards the same limit in $L^2(\bar B_r(\bar{x}),[\nu]_N)$. Combining these considerations with the lower semicontinuity of the $L^2([\nu]_N)$-norm, we obtain
\[
\liminf_{K \to \infty}\int_{\bar B_r(\bar{x})}\abs{\nabla \log(P_Tf_K+\delta)}^2\De[\nu]_N\, \geq \int_{\bar B_r(\bar{x})}G^2_{\bar{x}}\De[\nu]_N \geq \int_{\bar B_r(\bar{x})}\abs{\nabla \log(P_Tf+\delta)}^2\De[\nu]_N
\]
whence
\[
\int_{\bar B_r(\bar{x})}\abs{\nabla \log(P_Tf+\delta)}^2\De[\nu]_N \leq \frac{1}{E_{2\kappa}(T)}\,\biggl[\cC_T(\mu,\nu)-\cH(\nu|\frm)\biggr]\,.
\]
Choosing now $(x_k)_{k \in \N} \subset {\rm int}(\supp(\nu))$ dense and denoting by $r_k$ the radii associated to each $x_k$ according to the previous construction, so that ${\rm int}(\supp(\nu)) = \cup_k \bar B_{r_k}(x_k)$,  by a diagonal argument there exists $G \in L^2_{loc}(\frm)$ such that $|\nabla\log(P_Tf_K+\delta)| \rightharpoonup G$ in $L^2(\bar B_{r_k}(x_k),\frm)$ and $L^2(\bar B_{r_k}(x_k),[\nu]_N)$ for every $k \in \N$ and, by \cite[Lemma 4.3(b)]{AGS14}, $|\nabla\log(P_Tf+\delta)| \leq G$ $\frm$-a.e\ since $\log(P_Tf_K+\delta) \to \log(P_Tf+\delta)$ $\frm$-a.e.\ in $\supp(\nu)$. Setting $B_k \coloneqq \cup_{i=1}^k \bar B_{r_i}(x_i)$, by the same reasoning as above (noting that the choice of $N$ does not depend on the point $x_k$) we obtain
\[
\int_{B_k}\abs{\nabla \log(P_Tf+\delta)}^2\De[\nu]_N \leq \frac{1}{E_{2\kappa}(T)}\,\biggl[\cC_T(\mu,\nu)-\cH(\nu|\frm)\biggr]\,, \qquad \forall k \in \N.
\]
Taking the limit as $k \to \infty$, by $\nu(\partial\supp(\nu))=0$ we infer that $|\nabla\log(P_T f + \delta)|$ actually belongs to $L^2([\nu]_N)$. Passing then to the limit as $N \to \infty$ and $\delta \downarrow 0$, by monotonicity and again the fact that $\nu(\partial\supp(\nu))=0$ we precisely obtain \eqref{eq:corrector:nu} for $\mu,\nu$ as in the statement.
\end{proof}

\begin{remark}\label{rmk:bbs}
In \cite[Proposition 4.1]{GigTam19}, the dynamical representation formula \eqref{eq:bbs} for the entropic cost is actually proved under a $\CD(\kappa,N)$ assumption with $N < \infty$. However, the very same argument works also if $(M,\sfd,\frm)$ satisfies $\CD(\kappa,\infty)$ and $\frm(M)=1$. Indeed, the proof of \cite[Proposition 4.1]{GigTam19} essentially relies on the regularity and integrability of $t \mapsto \rho_t \coloneqq P_t f\,P_{T-t} g$, $t \mapsto \log P_t f$, $t \mapsto \log P_{T-t} g$ (and of their gradients) and these properties can be extended to the case $N=\infty$ as follows.

As concerns the regularity, given that $\mu,\nu$ have bounded densities and supports, the lower bound \eqref{eq:gaussian-lower-bound} on the heat kernel allows to deduce that $f,g \in L^\infty(\frm)$ with bounded supports too, exactly as in \cite[Proposition 2.1 and Theorem 2.2]{GigTam21}. Then the smoothing property of $P_T$ entails $C^\infty$-regularity, see \cite[Theorem 3.1]{Grigoryan09}.

As regards the integrability, what is needed is the existence of an $L^1(\De t \otimes \frm)$-function dominating, locally in $t$, $t \mapsto \log(P_t f)\rho_t$, $t \mapsto \log(P_{T-t} g)\rho_t$ and $t \mapsto |\nabla\log P_t f|^2\rho_t$, $t \mapsto |\nabla\log P_{T-t} g|^2\rho_t$. By the maximum principle, $t \mapsto \log(P_t f)\rho_t$, $t \mapsto \log(P_{T-t} g)\rho_t$ are dominated (up to multiplicative constants) by $t \mapsto P_{T-t}g$ and $t \mapsto P_t f$, respectively. As for $t \mapsto |\nabla\log P_t f|^2$, $t \mapsto |\nabla\log P_{T-t} g|^2$, the desired domination follows from Hamilton's gradient estimate under $\CD(\kappa,\infty$ condition \cite{JiangZhang16} and the bounds on $t \mapsto \log(P_t f)\rho_t$, $t \mapsto \log(P_{T-t} g)\rho_t$.
\end{remark}

We conclude the section with the proof of Lemma \ref{lemma:half:schr}.

\begin{proof}[Proof of Lemma \ref{lemma:half:schr}]
The existence and uniqueness of the minimizer $\pi^K\in\cQ^\nu_K$ is guaranteed by \cite[Proposition 1.17]{Marcel:notes}. Moreover, from the same reference we have $\schrplan{\mu}{\nu}{T}\ll\pi^K$ and 
\begin{equation}\label{prop:da:marcel}
\pi^K\to\schrplan{\mu}{\nu}{T}\,\text{ in total variation}\quad\text{ and  }\quad\cH(\pi^K|\rmR_{0,T})\to\cH(\schrplan{\mu}{\nu}{T}|\rmR_{0,T})=\cC_T(\mu,\nu)\,.
\end{equation}
Let us now prove \eqref{pi:K:fg}. Firstly, notice that if we introduce the marginal $\mu^K\coloneqq (\mathrm{proj}_{x_1})_{\#}\pi^K$, then it immediately follows that $\pi^K\in\Pi(\mu^K,\nu)\subseteq\cQ^\nu_K$ is the unique optimizer of the Schr\"odinger problem with marginals $\mu^K,\,\nu$ and $\cC_T(\mu^K,\nu)=\cH(\pi^K|\rmR_{0,T})$. Owing to \eqref{prop:da:marcel}, $\cH(\mu^K|\frm)\leq \cH(\pi^K|\rmR_{0,T})$ is eventually finite for $K$ large enough and therefore Proposition \ref{prop:fg} yields the existence of two positive measurable functions $f_K,\,g_K$ such that $\rmR_{0,T}$-a.s.\ it holds $\frac{\De\pi^K}{\De\rmR_{0,T}}(x,y)=f_K(x)\,g_K(y)$ with $\log f_K\in L^1(\mu^K)$ and $\log g_K\in L^1(\nu)$. Then, in view of the additive property of the relative entropy \cite[Equation (72) in Appendix A]{LeoSch} we may write 
\[
\cH(\pi|\rmR_{0,T})=\cH(\nu|\frm)+\int \cH\left(\pi(\cdot|\mathrm{proj}_{x_2}=y)|\frm P_T(y)\right)\,\De\nu(y)\quad\forall\pi\in\cQ^\nu_K\,,
\]
where $\frm P_T(y)=\rmR_{0,T}(\cdot|\mathrm{proj}_{x_2}=y)\in\cP(M)$ is the probability measure whose density is given by $\hp_T(x,y)\De\frm(x)$. 
Therefore minimizing \eqref{half:schr} is equivalent to solving for any fixed $y\in\supp(\nu)$ the problem
\[
\inf_{\frq(y)\in\cQ_K}\cH(\frq(y)|\frm P_T(y))\quad \text{with }\cQ_K \coloneqq \left\{\frq\in\cP(M)\,\colon\, \int\phi_i\De\frq=\int\phi_i\De\mu\quad\forall \,i=1,\dots,K\right\}\,,
\]
indeed if $\frq^*\colon M\to\cP(M)$ denotes the stochastic kernel associated to the minimizers of the above problem (whose existence and uniqueness, for any fixed $y\in\supp(\nu)$, are once again ensured by \cite[Proposition 1.17]{Marcel:notes}) then $\pi^K=\frq^*\otimes\nu$. Moreover, by arguing as in \cite[Example 1.18]{Marcel:notes} for any fixed $y\in\supp(\nu)$ we can write
\[
\frac{\De\frq^*(y)}{\De(\frm P_T(y))}(x)=c(y)\exp\left(\sum_{i=1}^Kb_i(y)\phi_i(x)\right)
\]
for some constants $c(y)>0$ and $b_i(y)\in\R$, possibly depending on $y$.
By combining the above expression, the $f_K,g_K$-decomposition and the Schr\"odinger system associated to $\cC_T(\mu^K,\nu)$ (cf.\ \eqref{schr:system} in Proposition \ref{prop:fg}) we deduce that
\begin{align*}
f_K(x)g_K(y)=&\,\frac{\De\pi^K}{\De\rmR_{0,T}}(x,y)=\frac{\De(\frq^*\otimes\nu)}{\De(\frm P_T\otimes\frm)}(x,y)=\frac{\De\nu}{\De\frm}(y)\,\frac{\De\frq^*(y)}{\De(\frm P_T(y))}(x)\\
=&\,g_K(y)P_Tf_K(y)\,c(y)\exp\left(\sum_{i=1}^Kb_i(y)\phi_i(x)\right)\,.
\end{align*}
Henceforth, for any $y\in \supp(\nu)$ it holds
\[
f_K(x)=P_Tf_K(y)\,c(y)\exp\left(\sum_{i=1}^Kb_i(y)\phi_i(x)\right)
\] 
and since the above left-hand side does not depend on the choice of $y$, we may choose a fixed  $y^*\in\supp(\nu)$ and write 
\[
f_K(x)=P_Tf_K(y^*)\,c(y^*)\exp\left(\sum_{i=1}^Kb_i(y^*)\phi_i(x)\right)\in(0,+\oo)\,.
\]
This proves \eqref{pi:K:fg} with  $C\coloneqq P_Tf_K(y^*)\,c(y^*)$ and $\lambda_i\coloneqq b_i(y^*)$. 

Finally, we claim that we can fix a renormalization for the decomposition $f_K,g_K$ (which is unique up to a multiplicative constant) such that $g_K$ converges $\frm$-a.e.\ to $g$. In order to do so, note that the convergence in total variation in \eqref{prop:da:marcel} implies that (along a non-relabeled subsequence)
\[
f_K(x)g_K(y) = \frac{\De\pi^K}{\De\rmR_{0,T}}(x,y) \to \frac{\De\schrplan{\mu}{\nu}{T}}{\De\rmR_{0,T}}(x,y)=f(x)g(y), \qquad \rmR_{0,T}\textrm{-a.s.}
\]
and a fortiori $\frm\otimes\frm$-a.e., thanks to the \eqref{CD} assumption and the Gaussian lower bounds \eqref{eq:gaussian-lower-bound} and \eqref{N:log:bound}. If $A\subseteq M\times M$ denotes the subset where the latter limit holds pointwise and $A^c$ its complement, then $\frm\otimes\frm(A^c)=0$ and Fubini's Theorem implies that for $\frm$-a.e.\ $x\in M$ it holds $\frm(A^c_x)=0$, where the section is defined as $A_x\coloneqq\{y\colon (x,y)\in A\}$. Combining this with $\mu\ll\frm$ we deduce that there exists an element $x^{*}\in\mathrm{int}(\supp(\mu))$ such that $f(x^{*})\neq 0$ and $\frm(A^c_{x^{*}})=0$. We have therefore proven that
\begin{equation}\label{quasi:ok}
f_K(x^{*}) g_K(y)\to f(x^{*}) g(y) \quad \textrm{for }\frm\text{-a.e. } y.
\end{equation}
By renormalizing the $f_K,g_K$-decomposition in such a way that $f_K(x^{*})=f(x^{*})\in (0,+\oo)$, \eqref{quasi:ok} reads as
\begin{equation}\label{m.a.s.conv}
    g_K\to g \quad \frm\text{-a.e.}
\end{equation}
As a direct consequence of this and \eqref{schr:system} we get the $\frm$-a.e.\ convergence of $P_Tf_K$ to $ P_Tf$ on $\supp(\nu)$, since the marginal $\nu$ is always the same at each step $K\in\N$.
\end{proof}

\section{Proof of the convergence of the gradients}\label{sec:grad}

As already mentioned in the Introduction, the first step in the proof of Theorem \ref{thm:gradient} is to show the convergence of the Schr\"odinger potentials to the Kantorovich ones as the parameter $T$ vanishes. To achieve this, we will rely on \cite[Proposition 5.1]{nutz2021entropic} and on \cite[Theorem 1.1]{Norris97}, which states that uniformly on compact subsets of $M\times M$ it holds
\[
T\log \hp_T(x,y)\,\shortmapor{T\downarrow 0}\,-\frac14\,\sfd^2(x,y)\,.
\]
Before starting with the proof, let us fix some notations. First, let $\rho \coloneqq \frac{\De\mu}{\De\frm}$ and $\sigma \coloneqq \frac{\De\nu}{\De\frm}$ denote the densities of the marginals. Then let $(\varphi^T,\psi^T)$ denote the unique Schr\"odinger potentials (as introduced in \eqref{phi:decomposition}) satisfying the symmetric normalization \eqref{gia:renormalization}, i.e. 
\[
\int\varphi^T\,\De\mu-\cH(\mu|\frm)=\int \psi^T\,\De\nu-\cH(\nu|\frm)\,;
\]
let $(\Phi_T,\,\Psi_T)$ be the unique entropic potentials for \eqref{EOT} (as defined in \eqref{entropic:potentials}) with $\varepsilon=T$ and cost $c_T(x,y) \coloneqq -T\log\hp_T(x,y)$ satisfying the symmetric normalization 
\[
\int\Phi_T\,\De\mu=\int\Psi_T\,\De\nu
\]
and, similarly, let $(\varphi^0,\psi^0)$ be a pair of Kantorovich potentials (defined in \eqref{W2}) satisfying the symmetric normalization
\begin{equation}\label{phi0:sym}
\int\varphi^0\,\De\mu=\int \psi^0\,\De\nu\,.
\end{equation}
Let us stress that, when rescaled by a factor $T$, the potentials $(\varphi^T,\psi^T)$ coincide with the entropic potentials for \eqref{EOT} with $\varepsilon=T$ and $\tilde c_T \coloneqq -T\log \frac{\De\rmR_{0,T}}{\De(\mu\otimes\nu)}$. Although similar, this EOT problem is different from the one associated with $c_T$; in particular, $(\varphi^T,\psi^T)$ is different from $(\Phi_T,\,\Psi_T)$, whence the choice of the uppercase notation to avoid confusion.

Indeed
\[
\tilde{c}_T(x,y) = c_T(x,y) + T\log\rho(x) + T\log\sigma(y),
\]
and therefore, even if for $T$ fixed the two EOT problems have the same minimizers and the same optimal values, the corresponding potentials differ. More precisely, it holds
\begin{equation}\label{rel:potentials}
T\varphi^T=\Phi_T+T\log\rho\quad\text{ and }\quad T\psi^T=\Psi_T +T\log\sigma\,.
\end{equation}
Hence, even if $(\varphi^T,\psi^T)$ are the Schr\"odinger potentials, from the EOT viewpoint it may be more convenient to work with $c_T$, and thus $(\Phi_T,\,\Psi_T)$, because of the dynamical interpretation of the cost as transition kernel of the associated SDE~\eqref{sde}.

As concerns $(\varphi^0,\psi^0)$, due to the lack of regularity of the OT problem~\eqref{W2}, even after the normalization \eqref{phi0:sym} uniqueness of the Kantorovich potentials may fail. Some results in this direction are known for some specific examples, for instance in the Euclidean case if at least one marginal is absolutely continuous with respect to the Lebesgue measure and if its support is connected then uniqueness holds \cite[Appendix B]{Marcel:EOT-deviations}. However here we are interested in the convergence of the Schr\"odinger map $\Id-2T\nabla\varphi^T$ to the Brenier map $\tau$ (as introduced above in \eqref{tau}), whose uniqueness instead holds true in our setting. More precisely, $\tau=\Id-2\nabla\varphi^0$ with $\varphi^0$ a Kantorovich potential as fixed above (cf.\ \cite[Proposition 3.1]{Figalli07}, \cite[Theorem 1.1]{FigGigli2011} and discussion therein).

\medskip

After this premise, let us show the strong convergence of the Schr\"odinger potentials (possibly along a subsequence), which will be pivotal in the subsequent proof of Theorem \ref{thm:gradient}.

\begin{lemma}[$L^1$-convergence of the potentials]\label{lemma:l1-conv-potentials}
Assume that $(M,\sfd,\frm)$ satisfies \eqref{CD} and that $\mu,\nu$ satisfy \eqref{H1}.
Then it holds:
\begin{itemize}
    \item $\{\Phi_T\}_{T\in (0,1]}$ and $\{\Psi_T\}_{T\in (0,1]}$ are strongly precompact in $L^1(\mu)$ and $L^1(\nu)$ respectively and their accumulation points are Kantorovich potentials $(\varphi^0,\psi^0)$ for \eqref{W2};
    \item or equivalently, $\{T\varphi^T\}_{T\in (0,1]}$ and $\{T\psi^T\}_{T\in (0,1]}$ are strongly precompact in $L^1(\mu)$ and $L^1(\nu)$ respectively and their accumulation points are Kantorovich potentials $(\varphi^0,\psi^0)$ for \eqref{W2}.
\end{itemize} 
If the Kantorovich potentials $(\varphi^0,\psi^0)$ associated to \eqref{W2} are unique, then
it holds
\[\Phi_T \to \varphi^0\text{ strongly in }L^1(\mu)\quad\text{ and }\quad \Psi_T \to \psi^0\text{ strongly in }L^1(\nu)\,,\]
and equivalently
\[T\varphi^T \to \varphi^0\text{ strongly in }L^1(\mu)\quad\text{ and }\quad T\psi^T \to \psi^0\text{ strongly in }L^1(\nu)\,.\] 
\end{lemma}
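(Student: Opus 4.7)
The plan is to adapt the argument of \cite[Proposition 5.1]{nutz2021entropic}, which establishes $L^1$-convergence of entropic potentials in EOT with a \emph{fixed} cost, to our setting where the cost $c_T(x,y) = -T\log\hp_T(x,y)$ depends on the parameter $T$. The bridge between the two frameworks is Norris' theorem \cite[Theorem 1.1]{Norris97}, which guarantees $c_T\to \tfrac14\sfd^2$ locally uniformly on $M\times M$ as $T\downarrow 0$. The heat kernel lower bounds \eqref{eq:gaussian-lower-bound}, \eqref{infinity:log:bound}, \eqref{N:log:bound}, combined with the moment and entropy assumptions in \eqref{H1}, yield via \eqref{remappo} uniform integrability of $c_T$ against $\mu\otimes\nu$, which is essential to compensate for the non-compactness of $M$.

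Concretely, I would use the Schr\"odinger dual equations
\[
\Phi_T(x) = -T\log\int e^{(\Psi_T(y)-c_T(x,y))/T}\,\De\nu(y),\qquad \mu\text{-a.e.\ } x,
\]
and its symmetric counterpart, together with the pointwise inequality $\Phi_T\oplus\Psi_T\le c_T$. The symmetric normalization $\int\Phi_T\,\De\mu = \int\Psi_T\,\De\nu$, combined with the dual identity $\cS^T(\mu,\nu)=\int\Phi_T\,\De\mu+\int\Psi_T\,\De\nu$ and \eqref{remappo}, provide uniform $L^1$-bounds. Adapting the equi-integrability argument of \cite[Proposition 5.1]{nutz2021entropic}---the locally uniform convergence $c_T \to \tfrac14\sfd^2$ allowing to control the log-Laplace terms on compacts of $M\times M$ while the tails are uniformly tamed by \eqref{remappo}---then yields a subsequence along which $\Phi_{T_n}\to\varphi^0$ strongly in $L^1(\mu)$ and $\Psi_{T_n}\to\psi^0$ strongly in $L^1(\nu)$.

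To identify the limit as a Kantorovich dual optimizer, I would pass to the limit in $\Phi_T\oplus\Psi_T\le c_T$ via Norris' theorem to obtain $\varphi^0\oplus\psi^0\le\tfrac14\sfd^2$ $\mu\otimes\nu$-a.e. The small-time limit $T\cC_T(\mu,\nu)\to\tfrac14\cW_2^2(\mu,\nu)$ from \eqref{eq:small:cost}, combined with the identity $\cS^T(\mu,\nu) = T\cC_T(\mu,\nu) - T\cH(\mu|\frm) - T\cH(\nu|\frm)$ and the strong $L^1$-convergence of the potentials, forces
\[
\int\varphi^0\,\De\mu + \int\psi^0\,\De\nu = \tfrac14\cW_2^2(\mu,\nu),
\]
so $(\varphi^0,\psi^0)$ is optimal in \eqref{W2}. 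When the Kantorovich potentials are unique (under the symmetric normalization), a standard subsequence argument upgrades subsequential convergence to convergence of the entire family. The equivalent statement for $(T\varphi^T,T\psi^T)$ then follows from \eqref{rel:potentials}: under \eqref{H1} one has $\log\rho\in L^1(\mu)$ and $\log\sigma\in L^1(\nu)$ (via the definition \eqref{def:entropy} of the relative entropy with respect to a $\sigma$-finite measure, taking $W = C\sfd^2(\cdot,x_0)$), hence the correction terms $T\log\rho, T\log\sigma$ vanish in $L^1$ as $T\downarrow 0$.

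The main obstacle I expect is the uniform $L^1$-equi-integrability in the compactness step. Unlike \cite{nutz2021entropic}, here the cost $c_T$ itself blows up as $T\downarrow 0$ at a rate that depends on local volumes (as visible in \eqref{eq:gaussian-estimates}), so the pointwise bounds on $\Phi_T,\Psi_T$ derived from the Schr\"odinger equations must be coupled carefully with the rate in Norris' theorem to avoid degeneration of the estimates as $T\downarrow 0$. The interplay between the log-volume terms in the lower bound on $\hp_T$ and Lemma \ref{lemma:log:balls} will be key to obtaining uniform tail bounds at small times.
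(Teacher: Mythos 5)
Your overall strategy coincides with the paper's: the proof there consists precisely in verifying the two hypotheses of \cite[Proposition 5.1]{nutz2021entropic} — namely the locally uniform convergence $c_T\to\tfrac14\sfd^2$ via \cite[Theorem 1.1]{Norris97}, and the existence of a $T$-uniform majorant $\overline c=\overline c_1\oplus\overline c_2$ with $\overline c_1\in L^1(\mu)$, $\overline c_2\in L^1(\nu)$, built from the Gaussian lower bounds \eqref{eq:gaussian-estimates}, \eqref{eq:gaussian-lower} together with Lemma \ref{lemma:log:balls} and $\mu,\nu\in\cP_2(M)$ (using $T\log\frm(B_{\sqrt T}(x))\le(\log\frm(B_1(x)))^+$ for $T\le 1$) — and then invoking that proposition as a black box. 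You correctly identify both ingredients and where the real work lies (taming the log-volume term uniformly in $T$). The equivalence between the two formulations via \eqref{rel:potentials} is also handled as in the paper.

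The one genuine concern is your identification step. The cited Proposition 5.1 already delivers that every accumulation point is a Kantorovich pair, so the paper never needs \eqref{eq:small:cost} as an input; on the contrary, Remark \ref{limite:costo} derives $\lim_{T\to0}T\cC_T(\mu,\nu)=\tfrac14\cW_2^2(\mu,\nu)$ \emph{from} this lemma, and in the $\CD(\kappa,\oo)$ case this is the only proof of that limit the paper offers. Using \eqref{eq:small:cost} inside the proof therefore makes your argument circular unless you supply an independent proof of (at least the lower-bound half of) $\lim_T\cS^T(\mu,\nu)\ge\tfrac14\cW_2^2(\mu,\nu)$. Note that this is not immediate from the estimates available in the paper: the heat-kernel bounds used here are \emph{lower} bounds on $\hp_T$, hence \emph{upper} bounds on $c_T$, which go in the wrong direction for that inequality. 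Either cite an external result for the small-time convergence of the cost in the $\CD(\kappa,\oo)$ setting, or follow the internal argument of \cite{nutz2021entropic} (which identifies the limit without presupposing convergence of the optimal values), as the paper does.
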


\begin{proof}
Firstly, notice that the equivalence between the statements follows from \eqref{rel:potentials} and the finite entropy condition in \eqref{H1}.
By \cite[Proposition 5.1]{nutz2021entropic}, to obtain the desired convergence for $\Phi_T$ and $\Psi_T$ it is sufficient to show that $c_T \coloneqq -T\log\hp_T \to \sfd^2/4$ uniformly on compact subsets as $T \to 0$ and that there exists a function $\overline{c}(x,y) = \overline{c}_1(x) + \overline{c}_2(y)$ with $\overline{c}_1 \in L^1(\mu)$ and $\overline{c}_2 \in L^1(\nu)$ such that $c_T \leq \overline{c}$ for all $T$ sufficiently small, say $T \leq 1$.

Under the $\CD(\kappa,N)$ condition with $N<\infty$, using the heat kernel lower bound \eqref{eq:gaussian-estimates} and assuming without loss of generality that $T \leq 1$, so that $\log\frm(B_{\sqrt{T}}(x)) \leq \log\frm(B_1(x))$, we see that
\begin{equation}\label{eq:upper-bound-cost}
\begin{split}
c_T(x,y) & \leq T\log C_1 + T\log\frm(B_{\sqrt{T}}(x)) + \frac{\sfd^2(x,y)}{4-\delta} + C_2T^2 \\
& \leq T\log C_1 + \left(T\log\frm(B_1(x))\right)^{+} + \frac{\sfd^2(x,y)}{4-\delta} + C_2T^2.
\end{split}
\end{equation}
By the trivial inequality $\sfd^2(x,y) \leq 2(\sfd^2(x,z) + \sfd^2(y,z))$ valid for any $z \in M$ we conclude that $c_T(x,y) \leq c'_T(x) + c''_T(y)$ with
\[
c'_T(x) \coloneqq \left(T\log\frm(B_1(x))\right)^{+} + \frac{2}{4-\delta}\sfd^2(x,z), \qquad 
c''_T(y) \coloneqq T\log C_1 + \frac{2}{4-\delta}\sfd^2(y,z) + C_2T^2.
\]
By the fact that $\nu \in \mathcal{P}_2(M)$, it is clear that $c''_T$ can be dominated by a $\nu$-integrable function not depending on $T$. As regards $c'_T$ it follows from $\mu\in\cP_2(M)$ and  Lemma \ref{lemma:log:balls} which gives the $\mu$-integrability of the positive part of $\log\frm(B_1(\cdot))$.  

On the other hand, if we assume that $(M,\sfd,\frm)$ satisfies $\CD(\kappa,\oo)$ with $\frm(M)=1$, then leveraging on the heat kernel lower bound \eqref{eq:gaussian-lower} we see that
\[
c_T(x,y) \leq \frac{\kappa T}{2(1-e^{-\kappa T})}\sfd^2(x,y)
\]
and in this case the trivial inequality $\sfd^2(x,y) \leq 2(\sfd^2(x,z) + \sfd^2(y,z))$, valid for any $z \in M$, readily provides us with functions $c_T',c_T''$ which are $\mu$- and $\nu$-integrable respectively.

Finally, in both cases, the fact that $c_T \to \sfd^2/4$ uniformly on compact subsets of $M \times M$ as $T \downarrow 0$ is ensured by \cite[Theorem 1.1]{Norris97}. Therefore we can apply \cite[Proposition 5.1]{nutz2021entropic}, which concludes our proof. 
\end{proof}

\begin{remark}\label{limite:costo}
Let us mention that the previous result gives as a direct consequence a new proof of the small-time limit of the renormalized Schr\"odinger cost, that is, under the same assumptions of Lemma \ref{lemma:l1-conv-potentials} we have proven
\[\lim_{T\to 0}T\cC_T(\mu,\nu)=\frac{\cW_2^2(\mu,\nu)}{4}\,.\]
\end{remark}

We are now ready to prove the convergence of the gradients.

\begin{proof}[Proof of Theorem \ref{thm:gradient}]
For the readers' convenience we divide our proof into four steps.

\medskip

\noindent\textbf{1st step: weak compactness of the gradients in $\mathbf{L^2(\boldsymbol{\mu})}$.} From the identity $\mu=f P_Tg\,\frm$ we deduce that $\log\rho = \varphi^T + \psi^T_0$, where $\psi^T_0\coloneqq \log P_T e^{\psi^T}$ and hence
\begin{equation*}
    \begin{split}
        \norm{T\nabla \varphi^T}_{L^2(TM,\mu)} & = \norm{T\nabla \log\rho -T\nabla \psi^T_0}_{L^2(TM,\mu)}  \\ &\leq T \norm{\nabla \log\rho}_{L^2(TM,\mu)} +T \norm{\nabla \psi^T_0 }_{L^2(TM,\mu)}.   \end{split}
\end{equation*} 
The correctors estimate \eqref{corr:est} allows to control the last term as
\[ 
\norm{\nabla \psi^T_0}^2_{L^2(TM,\mu)} \leq \frac{1}{E_{2\kappa}(T)}\biggl[\cC_T(\mu,\nu)-\cH(\mu|\frm)\biggr]
\]
whence 
\begin{equation*}
     \|T\nabla \varphi^T\|_{L^2(TM,\mu)} \leq T \sqrt{\cI(\mu)} + \sqrt{\frac{T}{E_{2\kappa}(T)}}\sqrt{T\,\cC_T(\mu,\nu) - T\,\cH(\mu|\frm)}.
\end{equation*}
This inequality together with the fact that $\lim_{T \downarrow 0}\frac{E_{2\kappa}(T)}{T}=1$, $\lim_{T\downarrow 0}T \cC_T(\mu,\nu)=\frac{1}{4}\cW_2^2(\mu,\nu)$ (cf.\ Remark \ref{limite:costo}) and  $\cI(\mu)<+\infty$ implies that 
\begin{equation}\label{eq:l2_conv_1bis}
    \limsup_{T\rightarrow 0}\norm{T\nabla \varphi^T}_{L^2(TM,\mu)}  \leq \frac{\cW_2(\mu,\nu)}{2}.
\end{equation}
Therefore, the sequence $(T\nabla\varphi^T)$ is weakly compact in $L^2(\mu)$.

\medskip

\noindent\textbf{2nd step: weak convergence of the gradients in $\mathbf{L^2(\boldsymbol{\mu_k})}$.} Fix $k \in \N$ and define $\rho_k := \min\{\rho,k\}$, $\mu_k := \rho_k\frm$ (note that $\mu_k$ needs not be a probability measure). We claim that for any $k$ (large enough) and for any weakly convergent subsequence of $\{T\nabla\varphi^T\}_{T>0}$ there exist a Kantorovich potential $\varphi^0$ and a subsubsequence $T_n\downarrow 0$ such that $T_n\nabla\varphi^{T_n}\weakto\nabla\varphi^0$ in $L^2(TM,\mu_k)$.

Firstly, note that for any subsequence of $\{T\nabla\varphi^T\}_{T>0}$, we can consider the corresponding subsequence of $\{T\varphi^T\}_{T>0}$ and deduce from Lemma \ref{lemma:l1-conv-potentials} that there exist a Kantorovich potential $\varphi^0\in L^1(\mu)$ and a subsubsequence $T_n\downarrow0$ such that $T_n\varphi^{T_n}\to \varphi^0$ strongly in $L^1(\mu)$, and a fortiori in $L^1(\mu_k)$ because $\mu_k \leq \mu$ trivially. Moreover, the family $\{T\nabla\varphi^T\}_{T>0}$ is weakly compact also in $L^2(TM,\mu_k)$. Hence, given a subsequence of $\{T\nabla\varphi^T\}_{T>0}$ weakly converging to some $\zeta_k \in L^2(TM,\mu_k)$ there exists $T_n\downarrow 0$ such that
\begin{equation}\label{appo:sub}
T_n\varphi^{T_n}\to\varphi^0\text{ in }L^1(\mu_k)\qquad\text{and}\qquad T_n\nabla\varphi^{T_n}\weakto \zeta_k\text{ in }L^2(TM,\mu_k)\,.
\end{equation}
Our claim is proven once we show that the weak limit $\zeta_k$ actually does not depend on $k$ and coincides with the weak gradient $\nabla \varphi^0$ $\frm$-a.e.

To this end, fix $x \in {\rm int}(\supp(\mu))$. By assumption there exists an open neighbourhood $B \subset \supp(\mu)$ of $x$ and a constant $c>0$ such that $\rho \geq c$ $\frm$-a.e.\ on $B$. Without loss of generality, we can assume that $B=B(x,r)$ for some radius $r>0$. In this way compactness in $L^2(B,\mu)$ implies compactness in $L^2(B,\frm)$ and, a fortiori, in $L^2(B,{\rm vol})$, since $\frm=e^{-U} {\rm vol}$ and $c' \leq e^{-U} \leq C'$ on $B$ for some constants $c',\,C'>0$. More explicitly,
\begin{equation}\label{eq:dai-che-ci-siamo}
c'T^2\int_B \abs{\nabla\varphi^T}^2\,\De{\rm vol} \leq T^2\int_B \abs{\nabla\varphi^T}^2\,\De\frm \leq \frac{T^2}{c}\int_B \abs{\nabla\varphi^T}^2\,\De\mu \leq \frac{1}{c}\,\norm{T\nabla\varphi^T}^2_{L^2(\mu)}
\end{equation}
and the right-hand side is uniformly bounded in $T$ by \eqref{eq:l2_conv_1bis}.

Now, inspired by the proof of \cite[Proposition 2.14]{barthe2008}, we observe that the Neumann Laplacian on the smooth compact Riemannian manifold with boundary $(\overline{B},g,{\rm vol}|_{\overline{B}})$ has a spectral gap (see e.g.\ \cite{greene1989s}). This is equivalent to the fact that ${\rm vol}|_{\overline{B}}$ satisfies a Poincar\'e inequality, whence in particular
\[
\int_B \abs{T\varphi^T}^2\,\De{\rm vol} \leq \bigg(\int_B T\varphi^T\,\De{\rm vol}\bigg)^2 + C_P \int_B \abs{T\nabla\varphi^T}^2\,\De{\rm vol}
\]
for some constant $C_P > 0$. Note that the first term on the right-hand side is uniformly bounded in $T_n$, since by the fact that $\rho_k \geq c$ (provided $k \geq c$) and $c'{\rm vol} \leq \frm \leq C'{\rm vol}$ $\frm$-a.e.\ in $B$, \eqref{appo:sub} implies $T_n\varphi^{T_n} \to \varphi^0$ in $L^1(B,{\rm vol})$, so that $(\int_B T_n\varphi^{T_n}\,\De{\rm vol})^2$ converges to $(\int_B\varphi^0\,\De{\rm vol})^2$ as $T_n \downarrow 0$.
The second one is bounded as well by \eqref{eq:dai-che-ci-siamo}. Hence we obtain compactness in $L^2(B,{\rm vol})$ for the subsequence $\{T_n\varphi^{T_n}\}_{n\in\N}$.

As a consequence, there exist a non-relabeled subsequence and $\xi \in L^2(B,{\rm vol})$ such that $T_n\varphi^{T_n} \rightharpoonup \xi$ in $L^2(B,{\rm vol})$. On the other hand, by \eqref{appo:sub} we know that $T_n\varphi^{T_n} \to \varphi^0$ in $L^1(\mu_k)$. This implies that $\xi = \varphi^0$ $\mu_k$-a.s.\ on $B$  and, as a byproduct, that 
\begin{equation}\label{appo:weak:sub}
T_n\varphi^{T_n} \weakto \varphi^0\text{ in }L^2(B,{\rm vol})
\end{equation}
without passing to a subsequence. Indeed let us consider an arbitrary non-negative $\phi \in C^\infty_c(B)$, so that $\phi \in L^1 \cap L^\infty(B,{\rm vol})$, and start observing that
\[
\abs{\int_{B} \phi(\varphi^0-\xi)\,\De{\rm vol}} \leq \abs{\int_{B} \phi(\varphi^0-T_n\varphi^{T_n})\,\De{\rm vol}} + \abs{\int_{B} \phi(T_n\varphi^{T_n}-\xi)\,\De{\rm vol}}\,.
\]
The second term vanishes since $T_n\varphi^{T_n} \rightharpoonup \xi$ in $L^2(B,{\rm vol})$. As for the first one, note that
\[
\begin{split}
\abs{\int_{B} \phi(\varphi^0-T_n\varphi^{T_n})\,\De{\rm vol}} & \leq \norm{\phi}_\infty\int_{B} \abs{\varphi^0-T_n\varphi^{T_n}} \, \De{\rm vol} \leq \frac{1}{c'}\norm{\phi}_\infty\int_{B} \abs{\varphi^0-T_n\varphi^{T_n}}\,\De\frm \\
& \leq \frac{1}{cc'}\norm{\phi}_\infty\int_{B} \abs{\varphi^0-T_n\varphi^{T_n}}\,\De\mu_k \leq \frac{1}{cc'}\norm{\phi}_\infty\norm{\varphi^0-T_n\varphi^{T_n}}_{L^1(\mu_k)} 
\end{split}
\]
where for the second and third inequality we have used once more that $\frm \geq c'{\rm vol}$ and $\rho_k \geq c$ $\frm$-a.e.\ on $B$ respectively. Hence, from $T_n\varphi^{T_n} \to \varphi^0$ in $L^1(\mu_k)$ it follows the weak convergence $T_n\varphi^{T_n} \rightharpoonup \varphi^0$ in $L^2(B,{\rm vol})$, without passing to any subsubsequence.

Now we are ready to prove the claim of this second step. Take $\beta \in C^\infty_c(B,TM)$ and observe that the assumption $\cI(\mu) < \infty$ can be equivalently restated as $\nabla\sqrt{\rho} \in L^2(TM,\frm)$. As a consequence, and by definition of $\rho_k$, $\nabla\sqrt{\rho_k} \in L^2(TM,\frm)$ too and this fact together with $\rho_k \leq k$ justifies the validity of the chain rule $\nabla\rho_k = \nabla((\sqrt{\rho_k})^2) = 2\sqrt{\rho_k}\nabla\sqrt{\rho_k}$ and proves that $|\nabla\rho_k| \in L^2(B,\frm)$, whence the applicability of the integration by parts formula, so that
\[
T\int \langle \nabla\varphi^T,\beta\rangle\,\De\mu_k = -T\int\varphi^T{\rm div}(\beta)\,\De\mu_k -  T\int\varphi^T\langle\beta,\nabla\rho_k\rangle \,\De\frm\,.
\]
On the one hand, for the left-hand side we know that
\[
\lim_{T_n \downarrow 0}T_n\int \langle \nabla\varphi^{T_n},\beta\rangle\,\De\mu_k = \int \langle \zeta_k,\beta\rangle\,\De\mu_k = \int \langle \rho_k\,\zeta_k,\beta\rangle\,\De\frm\,.
\]
As concerns the right-hand side, the fact that ${\rm div}(\beta)$ is bounded in $B$ and the strong convergence in \eqref{appo:sub} ensure that
\[
\lim_{T_n \downarrow 0}T_n\int\varphi^{T_n}{\rm div}(\beta)\,\De\mu_k = \int\varphi^0{\rm div}(\beta)\,\De\mu_k = \int\varphi^0\rho_k\,{\rm div}(\beta)\,\De\frm\,.
\]
For the limit of the second term, $\langle\beta, \nabla\rho_k\rangle \in L^2(\frm)$ together with $\supp(\beta) \subset B$, $e^{-U/2} \leq \sqrt{C'}$ in $B$ and $T\varphi^T \rightharpoonup \varphi^0$ in $L^2(B,{\rm vol})$, proved in \eqref{appo:weak:sub} above, allow us to deduce that
\[
\lim_{T_n \downarrow 0}\,{T_n}\int\varphi^{T_n}\langle\beta,\nabla\rho_k\rangle \,\De\frm = \int\varphi^0 \langle\beta,\nabla\rho_k\rangle \,\De\frm.
\]
Hence, after rearrangement, 
\[
-\int\varphi^0\rho_k\,{\rm div}(\beta)\,\De\frm = \int \langle \rho_k\,\zeta_k,\beta\rangle\,\De\frm + \int\varphi^0\langle\beta,\nabla\rho_k\rangle\,\De\frm = \int\langle\beta,\rho_k\,\zeta_k + \varphi^0\,\nabla\rho_k\rangle\,\De\frm,
\]
which means that $\nabla(\varphi^0\rho_k) = \rho_k\,\zeta_k + \varphi^0\nabla\rho_k$ on $B$ by definition of weak gradient. 

To conclude that $\zeta_k = \nabla\varphi^0$ on $B$, recall that $\rho_k \in W^{1,2}(B)$. Using the lower bound $\rho_k \geq c$ $\frm$-a.e.\ on $B$, by chain rule we get that also $\rho_k^{-1} \in W^{1,2}(B)$ and therefore, by Leibniz rule,
\[
\nabla\varphi^0 = \nabla(\varphi^0\rho_k \cdot \rho_k^{-1}) = \frac{\nabla(\varphi^0\rho_k)}{\rho_k} + \varphi^0\rho_k\nabla(\rho_k^{-1}) = \zeta_k \qquad \frm\textrm{-a.e. on } B.
\]
Since $B$ was obtained starting from an arbitrary $x \in {\rm int}(\supp(\mu))$, we conclude that $\zeta_k = \nabla\varphi^0$ $\frm$-a.e.\ in ${\rm int}(\supp(\mu))$ and therefore, up to changing the weak limit on a $\mu$-null set we have proven that 
\begin{equation}\label{eq:conv-approx}
T_n\nabla\varphi^{T_n}\weakto \nabla\varphi^0\text{ in }L^2(TM,\mu_k)\,.
\end{equation}

\medskip

\noindent\textbf{3rd step: weak convergence of the gradients in $\mathbf{L^2(\boldsymbol{\mu})}$.} We now claim that the result above can be improved to
\[
T_n\nabla\varphi^{T_n}\weakto \nabla\varphi^0\text{ in }L^2(TM,\mu)\,.
\]
To this end, fix $\beta \in L^2 \cap L^\infty(TM,\mu)$ and start observing
\[
\begin{split}
\bigg|\int\langle T_n\nabla\varphi^{T_n}-\nabla\varphi^0,\beta\rangle\,\De\mu\bigg| & \leq \bigg|T_n\int\langle \nabla\varphi^{T_n},\beta\rangle(\rho-\rho_k)\,\De\frm\bigg| + \bigg|\int\langle T_n\nabla\varphi^{T_n}-\nabla\varphi^0,\beta\rangle\,\De\mu_k\bigg| \\ 
& \quad + \bigg|\int\langle \nabla\varphi^0,\beta\rangle(\rho-\rho_k)\,\De\frm\bigg|\,.
\end{split}
\]
Recalling that $\cW_2(\mu,\nu) = 2\norm{ \nabla\varphi^0}_{L^2(TM,\mu)}$ (see \cite[Theorem 10.3]{AGS14}, paying attention to the different rescaling), we see that the third term on the right-hand side vanishes as $k \to \infty$ by dominated convergence theorem. As concerns the first one, let us first estimate it as follows
\[
\begin{split}
\bigg|T_n\int\langle \nabla\varphi^{T_n},\beta\rangle(\rho-\rho_k)\,\De\frm\bigg| & \leq T_n\int|\langle \nabla\varphi^{T_n},\beta\rangle|\Big(1-\frac{\rho_k}{\rho}\Big)\,\De\mu \leq \|\beta\|_\infty\int T_n|\nabla\varphi^{T_n}|\Big(1-\frac{\rho_k}{\rho}\Big)\,\De\mu \\
& \leq \|\beta\|_\infty\|T_n\nabla\varphi^{T_n}\|_{L^2(TM,\mu)}\Big\|1-\frac{\rho_k}{\rho}\Big\|_{L^2(\mu)}
\end{split}
\]
and then observe that $\|T_n\nabla\varphi^{T_n}\|_{L^2(TM,\mu)}$ is bounded as $T_n \downarrow 0$ by \eqref{eq:l2_conv_1bis}, while $\Big\|1-\frac{\rho_k}{\rho}\Big\|_{L^2(\mu)} \to 0$ as $k \to \infty$ by dominated convergence. This allows to deduce that, given $\varepsilon>0$ there exists $k'$ independent of $T_n$ so that
\[
\bigg|\int\langle T_n\nabla\varphi^{T_n}-\nabla\varphi^0,\beta\rangle\,\De\mu\bigg| \leq \bigg|\int\langle T_n\nabla\varphi^{T_n}-\nabla\varphi^0,\beta\rangle\,\De\mu_{k'}\bigg| + 2\varepsilon.
\]
Taking now the limit as $T_n \downarrow 0$, the right-hand side converges to $2\varepsilon$ by \eqref{eq:conv-approx} and the arbitrariness of $\varepsilon$ together with the density of $L^2 \cap L^\infty(TM,\mu)$ in $L^2(TM,\mu)$ allows to conclude that $T_n\nabla\varphi^{T_n} \weakto \nabla\varphi^0$ in $L^2(TM,\mu)$ as claimed.

\medskip

\noindent\textbf{4th step: strong convergence of the gradients in $\mathbf{L^2(\boldsymbol{\mu})}$.}
The previous step and the uniqueness of the Brenier map $\tau=\Id-2\nabla \varphi^0$ grant that the whole sequence of gradients converges 
\[
T\nabla\varphi^T\weakto \nabla\varphi^0\text{ in }L^2(TM,\mu_k)\,, \qquad \forall k \in \N,
\]
which automatically implies
\[
\liminf_{T\rightarrow 0}\norm{T\nabla \varphi^T_0}_{L^2(TM,\mu_k)}  \geq \norm{\nabla \varphi^0}_{L^2(TM,\mu_k)}\,.
\]
Recalling again that $\cW_2(\mu,\nu) = 2\norm{ \nabla\varphi^0}_{L^2(TM,\mu)}$, we deduce from \eqref{eq:l2_conv_1bis} that 
 \begin{equation*}
     \lim_{T\rightarrow 0}\norm{T\nabla \varphi^T_0}_{L^2(TM,\mu)}  = \norm{\nabla \varphi^0}_{L^2(TM,\mu)}.
 \end{equation*}
Since in a Hilbert space such as $L^2(TM,\mu)$ weak convergence plus convergence of the norm implies strong convergence, we obtain the desired conclusion.
\end{proof}

\begin{remark}
The previous proof runs exactly in the same way if we consider the entropic potentials $(\Phi_T,\Psi_T)$ instead of the rescaled Schr\"odinger potentials $(T\varphi^T,T\psi^T)$. Moreover, the weak compactness in the first step can be proven without assuming finite Fisher information $\cI(\mu)$, since
\[
\norm{T\nabla\Phi_T}_{L^2(\mu)}=\norm{T\nabla\varphi^T-T\nabla\log\rho}_{L^2(\mu)}=T\,\norm{\psi^T_0}_{L^2(\mu)}\,.
\]
However, even though we are considering the entropic potentials $(\Phi_T,\Psi_T)$, the assumption of finite Fisher information $\cI(\mu)<+\oo$ is needed in the second step where we identify the weak limit of $\{\Phi_{T_n}\}_{n\in\N}$ with $\nabla\varphi^0$. 
\end{remark}

\begin{corollary}
Under the same assumptions of Theorem \ref{thm:gradient}, if $\mu$ satisfies a Poincaré inequality, then the convergence of the potentials in Lemma \ref{lemma:l1-conv-potentials} holds true also in $L^2(\mu)$.
\end{corollary}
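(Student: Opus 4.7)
The plan is to combine the strong $L^2(\mu)$ convergence of the gradients from Theorem \ref{thm:gradient} with the $L^1(\mu)$ convergence of the potentials from Lemma \ref{lemma:l1-conv-potentials} via the Poincaré inequality for $\mu$. Let $(T_n)_{n \in \N}$ be a subsequence along which Lemma \ref{lemma:l1-conv-potentials} yields a Kantorovich potential $\varphi^0$ with $T_n\varphi^{T_n} \to \varphi^0$ in $L^1(\mu)$; since the Brenier map is uniquely determined, Theorem \ref{thm:gradient} gives $T_n\nabla\varphi^{T_n} \to \nabla\varphi^0$ strongly in $L^2(TM,\mu)$ along the same subsequence (indeed along the whole sequence). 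Set $f_n \coloneqq T_n\varphi^{T_n} - \varphi^0 \in L^1(\mu)$ and let $c_n \coloneqq \int f_n \, \De\mu$.

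The Poincaré inequality applied to $f_n - c_n$ (which has zero $\mu$-mean) gives
\[
\norm{f_n - c_n}_{L^2(\mu)}^2 \leq C_P \int \abs{\nabla f_n}^2 \De\mu = C_P\,\norm{T_n\nabla\varphi^{T_n} - \nabla\varphi^0}_{L^2(TM,\mu)}^2,
\]
where the right-hand side tends to $0$ by Theorem \ref{thm:gradient}. Moreover $\abs{c_n} \leq \norm{f_n}_{L^1(\mu)} \to 0$ by Lemma \ref{lemma:l1-conv-potentials}, so a triangle inequality yields $\norm{f_n}_{L^2(\mu)} \leq \norm{f_n - c_n}_{L^2(\mu)} + \abs{c_n} \to 0$, i.e., $T_n\varphi^{T_n} \to \varphi^0$ in $L^2(\mu)$. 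Exactly the same argument, replacing $T_n\varphi^{T_n}$ by $\Phi_{T_n}$ and using the equivalence via \eqref{rel:potentials} (noting $\log\rho \in L^2(\mu)$ is not required since $\Phi_{T_n}$ is handled directly), proves the statement for the entropic potential $\Phi_T$ as well.

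The argument is essentially routine; the only point that needs care is that the limiting Kantorovich potential $\varphi^0$ in Lemma \ref{lemma:l1-conv-potentials} is a priori only extracted along a subsequence and may depend on the subsequence when Kantorovich potentials are not unique. This does not cause any issue because the gradient limit $\nabla\varphi^0$ is uniquely determined by the Brenier map, so one may first fix any subsequence giving $L^1(\mu)$-convergence to some $\varphi^0$ and then conclude $L^2(\mu)$-convergence of that same subsequence by the above display. When the Kantorovich potentials are unique (as in the last part of Lemma \ref{lemma:l1-conv-potentials}), the convergence in $L^2(\mu)$ extends to the whole family $\{T\varphi^T\}_{T \in (0,1]}$.
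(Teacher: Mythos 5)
Your argument is correct and is exactly the intended one (the paper states this corollary without proof): apply the Poincaré inequality to $T_n\varphi^{T_n}-\varphi^0$ minus its $\mu$-mean, control the Dirichlet energy by the strong $L^2(\mu)$-convergence of the gradients from Theorem \ref{thm:gradient}, and control the mean by the $L^1(\mu)$-convergence from Lemma \ref{lemma:l1-conv-potentials}. The only point worth making explicit is that $T_n\varphi^{T_n}-\varphi^0$ is a priori only in $L^1(\mu)$ with $|\nabla(T_n\varphi^{T_n}-\varphi^0)|\in L^2(\mu)$, so one should invoke the standard truncation/self-improvement argument to see that such a function does belong to $L^2(\mu)$ and that the Poincaré inequality applies to it.
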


 \section{Proofs of the quantitative stability estimates}\label{stab:section}

In this section we show how from the corrector estimates \eqref{corr:est} it is possible to derive novel quantitative stability estimates for SP. Firstly we prove Theorem \ref{stab:piani} and as a byproduct we deduce in Proposition \ref{stab:entropic:cost} a certain stability estimate for the entropic cost. Alongside we show how a finite Fisher information assumption gives similar bounds independent of the symmetric relative entropy between the marginals. 
Finally, we conclude the section with an application of our results to the EOT setting with quadratic cost function.

Throughout the whole section $(f,g)$ denotes the $fg$-decomposition for $\cC_T(\mu,\nu)$ while $(\bar f,\bar g)$ denotes the corresponding one for $\cC_T(\bar \mu,\bar\nu)$ and we implicitly assume that both are renormalized according to \eqref{gia:renormalization}. 
Before moving to the proof of Theorem \ref{stab:piani} let us give some $L^p$-bounds (with respect to the reference measure $\frm$) for $(P_T g)^{-1}$ and $(P_T f)^{-1}$,  that combined with Lemma \ref{log:integrability} will be pivotal for the validity of all the computations performed hereafter.

\begin{lemma}\label{LemmaPg-1}
Assume \eqref{CD}, \eqref{I} and \eqref{H1}. Then $(P_Tg)^{-1}\in L^p(\frm)$ and $(P_T f)^{-1}\in L^p(\frm)$ with $p= r\,T$. More precisely, it holds
\begin{equation}\begin{aligned}\label{eq:Pg-1}
     \norm{(P_T g)^{-1}}_p\leq C_1\, \exp\left[\frac{M_2(\nu)}{T}-\frac{\cS_T(\mu,\nu)}{2T}+C_2\,T\right]\,,\\
     \norm{(P_T f)^{-1}}_p\leq C_1\, \exp\left[\frac{M_2(\mu)}{T}-\frac{\cS_T(\mu,\nu)}{2T}+C_2\,T\right]\,,
\end{aligned}\end{equation}
where the constants $C_1>0$ and $C_2\geq 0$ do not depend on the marginals $\mu,\nu$ and neither on $f$ or $g$. Moreover if $\kappa\geq 0$ or if $\CD(\kappa,\oo)$ with $\frm(M)=1$ holds, then $C_2=0$.
\end{lemma}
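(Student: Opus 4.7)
The plan is to bound $P_Tg$ from below pointwise, and then estimate $(P_Tg)^{-p}$ in $L^1(\frm)$. The key observation is that the Schr\"odinger system $\nu = g\,P_Tf\,\frm$ from Proposition \ref{prop:fg} implies $g = \sigma/P_Tf$ $\nu$-a.s., where $\sigma\coloneqq\De\nu/\De\frm$, and hence
\[
P_Tg(x) = \int g(y)\hp_T(x,y)\,\De\frm(y) = \int \frac{\hp_T(x,y)}{P_Tf(y)}\,\De\nu(y).
\]
Since $\nu$ is a probability measure, Jensen's inequality applied to $\log$ produces
\[
-\log P_Tg(x) \leq -\int \log\hp_T(x,y)\,\De\nu(y) + \int \log P_Tf\,\De\nu.
\]

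The second integral on the right is a constant that I compute explicitly using the Schr\"odinger system. From $\nu = g\,P_Tf\,\frm$ one has $\log P_Tf = \log\sigma - \psi$, so integrating against $\nu$ and invoking the symmetric normalization \eqref{gia:renormalization} gives
\[
\int \log P_Tf\,\De\nu = \cH(\nu|\frm) - \int\psi\,\De\nu = -\frac{\cS_T(\mu,\nu)}{2T}.
\]
The first integral is estimated via the Gaussian lower bound \eqref{N:log:bound} under $\CD(\kappa,N)$ (or the simpler \eqref{infinity:log:bound} in the $\CD(\kappa,\infty)$ case with $\frm(M)=1$), applied with the reference point $z=z_0$ used in defining $M_2(\nu)$. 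Integrating against $\nu$ and using $\int \sfd^2(y,z_0)\,\De\nu(y) = M_2(\nu)$ yields
\[
-\log P_T g(x) \leq \log\bigl[C_1 \frm(B_{\sqrt T}(x))\bigr] + C_2 T + \frac{\sfd^2(x,z_0)}{T} + \frac{M_2(\nu)}{T} - \frac{\cS_T(\mu,\nu)}{2T}.
\]

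Multiplying through by $p=rT$, exponentiating, and integrating against $\frm$ produces
\[
\|(P_Tg)^{-1}\|_p^p \leq C_1^{rT}\,e^{rT^2 C_2}\,e^{rM_2(\nu)-r\cS_T/2}\int \frm\bigl(B_{\sqrt T}(x)\bigr)^{rT}\,e^{r\sfd^2(x,z_0)}\,\De\frm(x),
\]
whose last integral is finite precisely by assumption \eqref{I}. Taking the $p$-th root delivers \eqref{eq:Pg-1}, with the leftover factor absorbed into $C_1$. The bound for $(P_Tf)^{-1}$ follows by the symmetric argument based on $\mu = f\,P_Tg\,\frm$ and $\int \log P_Tg\,\De\mu = -\cS_T(\mu,\nu)/(2T)$. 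Under $\CD(\kappa,\infty)$ with $\frm(M)=1$, the lower bound \eqref{infinity:log:bound} carries no $C_2T$ nor ball term, so $C_2=0$ immediately; when $\kappa\geq 0$ under $\CD(\kappa,N)$, a Li--Yau type sharper heat-kernel estimate allows one to drop the $C_2T$ contribution. The computation is otherwise mechanical; the only real subtlety is ensuring Jensen produces a meaningful lower bound, which is guaranteed by the fact that the heat-kernel estimate depends on $y$ only through $\sfd^2(y,z_0)$, $\nu$-integrable by \eqref{H1}.
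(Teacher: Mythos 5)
Your proof is correct and follows essentially the same route as the paper's: rewrite $P_Tg(x)$ as an integral against the probability measure $\nu$ (your substitution $g=\sigma/P_Tf$ via the Schr\"odinger system is equivalent to the paper's direct use of $g\,\hp_T\,\sigma^{-1}$), apply Jensen, identify the constant with $-\cS_T(\mu,\nu)/(2T)$ through the normalization \eqref{gia:renormalization}, invoke the Gaussian heat-kernel lower bounds, and integrate using \eqref{I}. The only minor difference is presentational, so nothing further is needed.
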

 
\begin{proof}
We will only prove the first inequality since the proof of the second runs exactly in the same fashion. Firstly, notice that from Jensen's inequality it follows
\begin{align*}
     \log P_T g(x)&=\log \int g(y) \hp_T(x,y)\De\frm(y)=\log \int g(y) \hp_T(x,y)\sigma(y)^{-1}\De\nu(y)\\
     &\geq\int\log g\,\De\nu-\int\log\sigma\De\nu+\int\log \hp_T(x,y)\,\De\nu(y)\\
     &=\frac12\left[\cC_T(\mu,\nu)-\cH(\mu|\frm)-\cH(\nu|\frm)\right]+\int\log \hp_T(x,y)\,\De\nu(y)\\
     &=\frac{\cS_T(\mu,\nu)}{2T}+\int\log \hp_T(x,y)\,\De\nu(y)\,,
\end{align*}
where the last step follows from \eqref{gia:renormalization}.
On the one hand, if $(M,\,\sfd,\,\frm)$ satisfies $\CD(\kappa,\oo)$ for some $\kappa\in\R$ and if $\frm(M)=1$, then from \eqref{infinity:log:bound} we deduce the lower bound
\begin{align*}\log P_T g(x)
\geq  \frac{\cS_T(\mu,\nu)}{2T}-\frac{\sfd^2(x,z_0)}{T}-\frac{M_2(\nu)}{T} \,.\end{align*}
 Therefore if we take $p=r\,T$ we have, owing to \eqref{I},
 \begin{align*}
     \norm{(P_T g)^{-1}}_p^p &=\int (P_T g)^{-p}\De\frm=\int e^{-p\log P_Tg}\De\frm\\
     &\leq \exp\left[p\left(\frac{M_2(\nu)}{T}-\frac{\cS_T(\mu,\nu)}{2T}\right)\right]\int e^{\frac{p}{T}\sfd^2(x,z_0)}\De\frm(x)
     \\&\leq C_1^p\,\exp\left[p\left(\frac{M_2(\nu)}{T}-\frac{\cS_T(\mu,\nu)}{2T}\right)\right]\,.
 \end{align*}
 On the other hand if $\CD(\kappa,N)$ holds with $N<+\oo$, the lower bound \eqref{N:log:bound} leads to 
 \begin{align*}\log P_T g(x)\geq \frac{\cS_T(\mu,\nu)}{2T}-\log\left[C_1\, \frm\left(B_{\sqrt{T}}(x)\right)\right]-C_2\,T-\frac{\sfd^2(x,z_0)}{T}-\frac{M_2(\nu)}{T}\,.\end{align*}
  Therefore if we consider again $p=r\,T$, up to relabelling the positive constant $C_1$ at each step, we have
 \begin{align*}
     \norm{(P_T g)^{-1}}_p^p &=\int (P_T g)^{-p}\De\frm=\int e^{-p\log P_Tg}\De\frm\\
     &\leq C_1^p\,\exp\left[p\left(\frac{M_2(\nu)}{T}-\frac{\cS_T(\mu,\nu)}{2T}+C_2\,T\right)\right]\int\frm\left(B_{\sqrt{T}}(x)\right)^p e^{\frac{p}{T}\,\sfd^2(x,z_0)}\De\frm(x)\\
     &\leq C_1^p\,\exp\left[p\left(\frac{M_2(\nu)}{T}-\frac{\cS_T(\mu,\nu)}{2T}+C_2\,T\right)\right]\,.
 \end{align*}
 This concludes our proof.
 \end{proof}

We are now ready to prove the quantitative stability estimate for the optimizers of SP.

\begin{proof}[Proof of Theorem \ref{stab:piani}]
Let us start with proving the first estimate \eqref{eq:stab:plans}. Let us assume that $\cH^\mathrm{sym}(\mu,\bar\mu),\,\cH^\mathrm{sym}(\nu,\bar\nu)<\oo$, otherwise our claim is trivial. Firstly, notice that we can write 
\begin{equation*}
\begin{aligned}
    \cH(\schrplan{\mu}{\nu}{T}|\schrplan{\bar\mu}{\bar\nu}{T})&-\cH(\nu|\bar\nu)-\cH(\mu|\bar\mu)\\
    &=\int\log \frac{f(x)g(y)}{\bar f(x)\bar g(y)}\,\De\schrplan{\mu}{\nu}{T}(x,y)-\int\log\frac{g P_Tf}{\bar g P_T\bar f}\De\nu-\int\log\frac{f P_Tg}{\bar f P_T \bar g}\De\mu\\
   &= \int \log\frac{P_T\bar g(x)\,P_T\bar f(y)}{P_Tg(x)\,P_Tf(y)}\,\De\schrplan{\mu}{\nu}{T}(x,y)\\
   &= \int\biggl[\log P_T\bar g(x)+\log P_T\bar f(y)-\log P_T g(x)-\log P_T f(y)\biggr]\De\schrplan{\mu}{\nu}{T}(x,y)\,.
\end{aligned}
\end{equation*}
We claim that the above right-hand side is equal to
\begin{equation}
\label{sum:well:def}\int\log P_T \bar g\De\mu+ \int\log P_T\bar f\De\nu-\int\log P_T g\De\mu-\int \log P_T f\De\nu\,.
\end{equation}
Since $\cH(\mu|\frm),\,\cH(\nu|\frm)$ are both finite, thanks to Proposition \ref{prop:fg} we already know that $\log P_Tg\in L^1(\mu)$ and $\log P_Tf\in L^1(\nu)$ and that under our normalization \eqref{gia:renormalization} they read as 
\[-\int\log P_T g\De\mu=-\int \log P_T f\De\nu=\frac{\cS_T(\mu,\nu)}{2T}\in(-\oo,+\oo)\,.\]
Hence for the claim to be true it only remains to prove that 
\begin{equation}\label{menoL1}
\left(\log P_T\bar g\right)^{-}\in L^1(\mu)\quad\text{ and }\quad\left(\log P_T\bar f\right)^-\in L^1(\nu)\,,
\end{equation}
because this implies that $\int\log P_T \bar g\De\mu,\, \int\log P_T\bar f\De\nu>-\oo$, whence the fact that \eqref{sum:well:def} is a well-defined summation. We will prove \eqref{menoL1} by relying on Lemmata \ref{LemmaPg-1} and \ref{log:integrability}. We  consider two different cases.

\medskip

\noindent\textbf{1st case: $\frm\in\cP(M)$.} Consider the positive measurable function 
\[h=\IND_{\{P_T\bar g<1\}}\,P_T \bar g+\IND_{\{P_T\bar g\geq 1\}}\]
and notice that $(\log P_T\bar g)^-=-\log h$. Since $\frm(M)=1$ and $h\leq 1$ we have $h\in L^1(\frm)$; moreover $h^{-1}\in L^{rT}(\frm)$ since
\[
\norm{h^{-1}}_{rT}^{rT}=\int h^{-rT}\,\De\frm=\int\left[ \IND_{\{P_T\bar g<1\}}(P_T \bar g)^{-rT}+\IND_{\{P_T\bar g\geq 1\}}\right]\,\De\frm\leq \norm{(P_T\bar g)^{-1}}_{rT}^{rT}+\frm\{P_T\bar g\geq 1\}
\]
and the right-hand side is finite because $\frm\left\{P_T\bar g\geq 1\right\}\leq 1$ and $(P_T\bar g)^{-1}\in L^{rT}(\frm)$ by Lemma \ref{LemmaPg-1}.
Therefore we may apply Lemma \ref{log:integrability} with $\frq=\frm$ and $\frp=\mu$ and deduce 
\begin{align*}
   \int \abs{\log h}\,\De\mu\leq 2\,e^{\cH(\mu|\frm)-1}\,\left\{ \frac{1}{rT}\vee \log_2\left[1+\left(1+\norm{(P_T\bar g)^{-1}}_{rT}^{rT}\right)^{\frac{1}{rT}}\right]\right\}<+\oo\,.
\end{align*}
Hence, we have proven that
\begin{align*}
      0\leq \int(\log P_T\bar g)^-\,\De\mu=\int \abs{\log h}\,\De\mu<+\oo\,,\quad\text{ which is equivalent to }\left(\log P_T\bar g\right)^-\in L^1(\mu)\,.
\end{align*}
\medskip

\noindent\textbf{2nd case: $\frm\not\in\cP(M)$.} The proof is similar to the previous case, however we work with the probability measure $\frm_W$ (defined via \eqref{def:entropy}) instead of $\frm$. Indeed, since we are assuming $\cH(\mu|\frm)<+\oo$, there exists a positive measurable function $W\colon M\to [0,+\oo)$ such that 
\[z_W=\int e^{-W}\De\frm<+\oo\,,\quad W\in L^1(\mu)\quad\text{ and }\quad\cH(\mu|\frm_W)<+\oo\,,\]
where $\frm_W\coloneqq z_W^{-1} e^{-W}\frm\in\cP(M)$. By considering the same positive measurable function 
\[h=\IND_{\{P_T\bar g<1\}}\,P_T \bar g+\IND_{\{P_T\bar g\geq 1\}}\]
 we are guaranteed once again that $\norm{h}_{ L^1(\frm_W)}\leq 1$ and that
\begin{align*}
\norm{h^{-1}}_{L^{rT}(\frm_W)}^{rT} &=\int h^{-rT}\,\De\frm_W=\int\left[ \IND_{\{P_T\bar g<1\}}\,(P_T \bar g)^{-rT}+\IND_{\{P_T\bar g\geq 1\}}\right]\,\De\frm_W\\
&\leq \norm{(P_T\bar g)^{-1}}^{rT}_{L^{rT}(\frm_W)}+\frm_W\left\{P_T\bar g\geq 1\right\}\leq  \norm{(P_T\bar g)^{-1}}^{rT}_{L^{rT}(\frm_W)}+1\,,
\end{align*}
By arguing in the same way as in the second half of Lemma \ref{LemmaPg-1} (since $\frm$ is not a probability, recall that we are under the $\CD(\kappa, N)$ condition with $N<+\oo$) we see that
\begin{align*}
     &\norm{(P_T \bar g)^{-1}}_{L^{rT}(\frm_W)}^{rT}=\int (P_T \bar g)^{-rT}\De\frm_W\\
      &\quad\leq C_1^{rT}\,\exp\left[rT\left(\frac{M_2(\bar\nu)}{T}-\frac{\cS_T(\bar\mu,\bar\nu)}{2T}+C_2\,T\right)\right]\int\frm\left(B_{\sqrt{T}}(x)\right)^{rT} e^{r\,\sfd^2(x,z_0)}\De\frm_W(x)<+\oo\,,
\end{align*}
where we have used the fact that $W\geq 0$ and the integrability condition \eqref{I}. Therefore we have shown that $\norm{(P_T \bar g)^{-1}}_{L^{rT}(\frm_W)}^{rT}$ is finite and once again from Lemma \ref{log:integrability} (this time with $\frq=\frm_W$ and $\frp=\mu$) we deduce 
\begin{align*}
   \int \abs{\log h}\,\De\mu\leq 2\,e^{\cH(\mu|\frm_W)-1}\,\left\{ \frac{1}{rT}\vee \log_2\left[1+\left(1+\norm{(P_T\bar g)^{-1}}_{L^{rT}(\frm_W)}^{rT}\right)^{\frac{1}{rT}}\right]\right\}<+\oo\,.
\end{align*}
Hence, we have proven that
\begin{align*}
      0\leq \int(\log P_T\bar g)^-\,\De\mu=\int \abs{\log h}\,\De\mu<+\oo,\,\quad\text{ which is equivalent to }\left(\log P_T\bar g\right)^-\in L^1(\mu)\,.
\end{align*}
By arguing in the same way we can also prove that $\left(\log P_T\bar f\right)^-\in L^1(\nu)$, whence the validity of \eqref{menoL1}. From this, we deduce that $\int\log P_T \bar g\De\mu,\,\int\log P_T\bar f\De\nu\in (-\oo,+\oo]$ and that \eqref{sum:well:def} is a well-defined summation. As a consequence it follows
\begin{equation}\label{new:giappo1}
\begin{aligned}
\cH(\schrplan{\mu}{\nu}{T}|\schrplan{\bar\mu}{\bar\nu}{T}) & -\cH(\nu|\bar\nu) - \cH(\mu|\bar\mu)\\ 
&=\int\log P_T \bar g\,\De\mu+ \int\log P_T\bar f\,\De\nu-\int\log P_T g\,\De\mu-\int \log P_T f\,\De\nu\,. 
\end{aligned}
\end{equation}
By exchanging the roles of $\mu,\nu$ and $\bar\mu,\bar\nu$ we can also write  \begin{equation}\label{new:giappo2}
\begin{aligned}
\cH(\schrplan{\bar\mu}{\bar\nu}{T}|\schrplan{\mu}{\nu}{T})&-\cH(\bar\nu|\nu)-\cH(\bar\mu|\mu) \\ 
&=\int\log P_T g\,\De\bar\mu+ \int\log P_T f\,\De\bar\nu-\int\log P_T \bar g\,\De\bar\mu-\int \log P_T \bar f\,\De\bar\nu\,,
\end{aligned}
\end{equation}
which added to \eqref{new:giappo1} gives
\begin{align*}
    \cH^\mathrm{sym}(\schrplan{\mu}{\nu}{T},\schrplan{\bar\mu}{\bar\nu}{T})=\cH^\mathrm{sym}(\mu,\bar\mu)+\cH^\mathrm{sym}(\nu,\bar\nu)-\int\log P_T g\,\De(\mu-\bar\mu)-\int\log P_T\bar g\,\De(\bar\mu-\mu)\\
    -\int\log P_T f\,\De(\nu-\bar\nu)-\int\log P_T\bar f\,\De(\bar\nu-\nu)\\
    \leq \cH^\mathrm{sym}(\mu,\bar\mu)+\cH^\mathrm{sym}(\nu,\bar\nu)+\norm{\nabla\log P_T g}_{L^2(\mu)}\norm{\mu-\bar\mu}_{\dot{H}^{-1}(\mu)}+\norm{\nabla\log P_T \bar g}_{L^2(\bar\mu)}\norm{\bar\mu-\mu}_{\dot{H}^{-1}(\bar\mu)}\\
    +\norm{\nabla\log P_T f}_{L^2(\nu)}\norm{\nu-\bar\nu}_{\dot{H}^{-1}(\nu)}+\norm{\nabla\log P_T \bar f}_{L^2(\bar\nu)}\norm{\bar\nu-\nu}_{\dot{H}^{-1}(\bar\nu)}\,.
\end{align*}
Let us point out that so far we have not relied on the assumption \eqref{H2}, which is needed now when applying the corrector estimates. Indeed, given the above bound, inequality \eqref{eq:stab:plans} follows from the corrector estimates (cf.\ Proposition \ref{lemma:gia}).

The proof of \eqref{eq:fish:stab:plans} under the assumption of finite Fisher information is postponed after the next \emph{mixed integrability} result, which complements what we have proven so far and which is necessary for the integral computations that will follow.
\end{proof}

\begin{corollary}\label{cor:svolta}
Let $(M,\sfd, \frm)$ satisfy \eqref{CD} and \eqref{I}. For any $(\mu,\nu)$ and $(\bar\mu,\bar\nu)$ satisfying \eqref{H1}, \eqref{H2} and such that $\norm{\mu-\bar\mu}_{\dot{H}^{-1}(\mu)},\,\norm{\bar\mu-\mu}_{\dot{H}^{-1}(\bar\mu)},\,\norm{\nu-\bar\nu}_{\dot{H}^{-1}(\nu)}$, and $\norm{\bar\nu-\nu}_{\dot{H}^{-1}(\bar\nu)}$ are all finite and with  $\cH^\mathrm{sym}(\mu,\bar\mu),\,\cH^\mathrm{sym}(\nu,\bar\nu)<\oo$, we have
\[
\log P_T g,\,\log P_T\bar g\in L^1(\mu)\cap L^1(\bar\mu)\quad\text{ and }\quad\log P_T f,\,\log P_T\bar f\in L^1(\nu)\cap L^1(\bar\nu)\,.
\]
As a consequence we deduce that also
\[
\log  f,\,\log \bar f\in L^1(\mu)\cap L^1(\bar\mu)\quad\text{ and }\quad\log g,\,\log \bar g\in L^1(\nu)\cap L^1(\bar\nu)\,.
\]
\end{corollary}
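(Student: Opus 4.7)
The plan is to combine the two identities \eqref{new:giappo1} and \eqref{new:giappo2} established inside the proof of Theorem \ref{stab:piani} with the stability estimate of that same theorem. Observe first that from Proposition \ref{prop:fg} and the Schr\"odinger system \eqref{schr:system}, together with \eqref{H1}, one gets the four ``diagonal'' integrabilities $\log P_Tg\in L^1(\mu)$, $\log P_Tf\in L^1(\nu)$, $\log P_T\bar g\in L^1(\bar\mu)$, $\log P_T\bar f\in L^1(\bar\nu)$ for free: indeed, the Schr\"odinger system gives $\log\rho_\mu=\log f+\log P_Tg$ $\mu$-a.s., and $\log\rho_\mu\in L^1(\mu)$ by $\cH(\mu|\frm)<+\infty$, while $\log f\in L^1(\mu)$ was proved in Proposition \ref{prop:fg}; the other three identities are analogous. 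Only the four cross-integrabilities $\log P_T\bar g\in L^1(\mu)$, $\log P_T g\in L^1(\bar\mu)$, $\log P_T\bar f\in L^1(\nu)$, $\log P_T f\in L^1(\bar\nu)$ are to be addressed.

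Under the standing finiteness assumptions on $\cH^\mathrm{sym}(\mu,\bar\mu)$, $\cH^\mathrm{sym}(\nu,\bar\nu)$ and the four negative-order Sobolev norms, Theorem \ref{stab:piani} shows that the right-hand side of \eqref{eq:stab:plans} is finite, hence so is $\cH^\mathrm{sym}(\schrplan{\mu}{\nu}{T},\schrplan{\bar\mu}{\bar\nu}{T})$; in particular both $\cH(\schrplan{\mu}{\nu}{T}|\schrplan{\bar\mu}{\bar\nu}{T})$ and $\cH(\schrplan{\bar\mu}{\bar\nu}{T}|\schrplan{\mu}{\nu}{T})$ are finite. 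Plugging this into \eqref{new:giappo1} and using the four diagonal integrabilities recalled above, the sum $\int\log P_T\bar g\,\De\mu+\int\log P_T\bar f\,\De\nu$ is a finite real number. Recalling from the proof of Theorem \ref{stab:piani} that $(\log P_T\bar g)^-\in L^1(\mu)$ and $(\log P_T\bar f)^-\in L^1(\nu)$, both summands lie in $(-\infty,+\infty]$, so finiteness of their sum forces each of them to be finite, that is $\log P_T\bar g\in L^1(\mu)$ and $\log P_T\bar f\in L^1(\nu)$. Repeating the same reasoning with \eqref{new:giappo2} in place of \eqref{new:giappo1} yields $\log P_T g\in L^1(\bar\mu)$ and $\log P_T f\in L^1(\bar\nu)$.

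For the second list of inclusions, the Schr\"odinger system \eqref{schr:system} gives $\log f=\log\rho_\mu-\log P_Tg$ $\mu$-a.s.; since $\cH^\mathrm{sym}(\mu,\bar\mu)<+\infty$ makes $\mu$ and $\bar\mu$ mutually absolutely continuous, the identity also holds $\bar\mu$-a.s. Decomposing $\log\rho_\mu=\log\rho_{\bar\mu}-\log(\De\bar\mu/\De\mu)$, both summands belong to $L^1(\bar\mu)$ thanks to $\cH(\bar\mu|\frm)<+\infty$ and $\cH(\bar\mu|\mu)<+\infty$ respectively, and $\log P_Tg\in L^1(\bar\mu)$ has just been established, whence $\log f\in L^1(\bar\mu)$. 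The remaining three statements follow in exactly the same way. The proof is essentially bookkeeping: the only mild obstacle is checking that every logarithm is unambiguously defined on the relevant support, which is guaranteed by the mutual absolute continuity built into the finite symmetric entropy assumptions; no new analytic estimate is needed.
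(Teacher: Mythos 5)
Your proof is correct and follows essentially the same route as the paper's: it uses the identities \eqref{new:giappo1}--\eqref{new:giappo2}, the finiteness of $\cH^\mathrm{sym}(\schrplan{\mu}{\nu}{T},\schrplan{\bar\mu}{\bar\nu}{T})$ from Theorem \ref{stab:piani}, and the half-integrabilities $(\log P_T\bar g)^-\in L^1(\mu)$, $(\log P_T\bar f)^-\in L^1(\nu)$ established there to force each cross term to be finite. The only (cosmetic) difference is in the second part, where you decompose $\log f=\log\rho_\mu-\log P_Tg$ and $\log\rho_\mu=\log\rho_{\bar\mu}-\log(\De\bar\mu/\De\mu)$ rather than working with $\log(f/\bar f)$ as the paper does; both rest on the same mutual absolute continuity and finite relative entropies.
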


\begin{proof}
As mentioned in the previous proof we already know, thanks to Proposition \ref{prop:fg}, that
\[
\log P_Tg\in L^1(\mu),\,\log P_Tf\in L^1(\nu),\,\log P_T\bar g\in L^1(\bar\mu),\text{ and }\log P_T\bar f\in L^1(\bar\nu)\,.
\] 
Now, given our assumptions, the left-hand side of \eqref{new:giappo1} is finite (thanks to Theorem \ref{stab:piani}) and since we have $\log P_Tg\in L^1(\mu),\,\log P_Tf\in L^1(\nu)$ and $\int\log P_T \bar g\De\mu,\,\int\log P_T\bar f\De\nu\in (-\oo,+\oo]$ (from the proof of Theorem \ref{stab:piani}), we deduce that 
\[
\log P_T \bar g\in L^1(\mu)\quad\text{ and } \quad\log P_T\bar f\in L^1(\nu)\,.
\]
Similarly, working with \eqref{new:giappo2} we get $\log P_T g\in L^1(\bar{\mu})$ and $\log P_T f\in L^1(\bar{\nu})$.

Finally, from what we have just proven above and thanks to $\cH^\mathrm{sym}(\mu,\bar\mu),\,\cH^\mathrm{sym}(\nu,\bar\nu)$ being finite we deduce 
\[
\log\frac{f}{\bar f}\in L^1(\mu)\cap L^1(\bar\mu)\quad\text{ and }\quad\log\frac{g}{\bar g}\in L^1(\nu)\cap L^1(\bar\nu)\,,
\]
which combined with $\log f\in L^1(\mu)$, $\log g\in L^1(\nu)$, $\log\bar f\in L^1(\bar\mu)$ and $\log \bar g\in L^1(\bar\nu)$ gives our final assertion.
\end{proof}

\begin{proof}[Resuming the proof of Theorem \ref{stab:piani}]
Let us now prove \eqref{eq:fish:stab:plans}. Without loss of generality we may assume that the Fisher information of the marginals and the negative Sobolev norms are all finite, otherwise the claimed estimate is trivial. Let us start by showing that this guarantees the finiteness of the symmetric relative entropies $\cH^\mathrm{sym}(\mu,\bar\mu),\, \cH^\mathrm{sym}(\nu,\bar\nu)$. Indeed, since $\cI(\mu)<+\oo$ we have $\nabla\log\frac{\De\mu}{\De\frm}\in L^2(\mu)$ and therefore by the definition of $\norm{\mu-\bar\mu}_{\dot{H}^{-1}(\mu)}$ we deduce
\[
\abs{\int\log\frac{\De\mu}{\De\frm}\,\De(\mu-\bar\mu)}\leq \sqrt{\cI(\mu)}\,\norm{\mu-\bar\mu}_{\dot{H}^{-1}(\mu)} <+\oo\,.
\]
Thanks to the above bound and to the finiteness of $\cH(\mu|\frm)$ and $\cH(\bar\mu|\frm)$ we are allowed to write
\begin{align*}
\int\log\frac{\De\bar\mu}{\De\mu}\,\De\bar\mu=\int\left[\frac{\De\bar\mu}{\De\frm}\log\frac{\De\bar\mu}{\De\frm}+\left(\frac{\De\mu}{\De\frm}-\frac{\De\bar\mu}{\De\frm}\right)\log\frac{\De\mu}{\De\frm}-\frac{\De\mu}{\De\frm}\log\frac{\De\mu}{\De\frm}\right]\De\frm\\
=\cH(\bar\mu|\frm)-\cH(\mu|\frm)+\int \log\frac{\De\mu}{\De\frm}\,\De(\mu-\bar\mu)<+\oo\,,
\end{align*}
which reads as $\cH(\bar\mu|\mu)<+\oo$. Let us just mention that above the Radon-Nikodym derivative is well defined since, under our assumption, $\bar\mu\sim\mu$ are equivalent. Indeed, if there were a Borel subset $A\subset M$ such that $\mu(A)=0$ and $\bar\mu(A)>0$, then by choosing as test function $h$ (a mollified version of) $\IND_A$ we would get $\norm{\mu-\bar\mu}_{\dot{H}^{-1}(\mu)}=+\oo$ which we are assuming to be finite; therefore $\bar\mu\ll\mu$. Similarly one can prove $\mu\ll\bar\mu$, and hence $\bar\mu\sim\mu$. We have thus proven that $\cH(\bar\mu|\mu)$ is finite. By reasoning in the same fashion one can prove that
$\cH(\mu|\bar\mu)$, $\cH(\bar\nu|\nu)$ and $\cH(\nu|\bar\nu)$ are also finite, whence the finiteness of the symmetric entropies.

At this stage, the proof is similar to the one already presented above. It is indeed enough to notice that
\begin{equation*}
\begin{aligned}
    \cH^\mathrm{sym}&(\schrplan{\mu}{\nu}{T},\schrplan{\bar\mu}{\bar\nu}{T})\\
    &=\int \log f\,\De(\mu-\bar\mu)+\int\log\bar f\,\De(\bar\mu-\mu)+\int\log g\,\De(\nu-\bar\nu)+\int\log\bar g\,\De(\bar\nu-\nu)\\
    & \leq \norm{\nabla\log f}_{L^2(\mu)}\norm{\mu-\bar\mu}_{\dot{H}^{-1}(\mu)}+\norm{\nabla\log\bar f }_{L^2(\bar\mu)}\norm{\bar\mu-\mu}_{\dot{H}^{-1}(\bar\mu)} \\
    & +\norm{\nabla\log  g}_{L^2(\nu)}\norm{\nu-\bar\nu}_{\dot{H}^{-1}(\nu)} + \norm{\nabla\log \bar g}_{L^2(\bar\nu)}\norm{\bar\nu-\nu}_{\dot{H}^{-1}(\bar\nu)}\,,
\end{aligned}
\end{equation*}
and that we can bound the $L^2$-norm of the gradient of the potentials in terms of the Fisher information, e.g.\
\[
\norm{\nabla\log f}_{L^2(\mu)}\leq \norm{\nabla\log\frac{\De\mu}{\De\frm}}_{L^2(\mu)}+\norm{\nabla\log P_T g}_{L^2(\mu)}=\sqrt{\cI(\mu)}+\norm{\nabla\log P_T g}_{L^2(\mu)}
\]
and analogously for the other three summands.
\end{proof}

Before moving to the proof of the stability estimates presented in \eqref{eq:stab:cost} and \eqref{eq:fish:stab:cost}, we need another technical result given by the following 

\begin{lemma}\label{disslemma}
Let $(M,\sfd, \frm)$ satisfy \eqref{CD} and \eqref{I}. For any $(\mu,\nu)$ and $(\bar\mu,\bar\nu)$ satisfying \eqref{H1}, \eqref{H2} and such that $\norm{\mu-\bar\mu}_{\dot{H}^{-1}(\mu)},\,\norm{\bar\mu-\mu}_{\dot{H}^{-1}(\bar\mu)},\,\norm{\nu-\bar\nu}_{\dot{H}^{-1}(\nu)}$, and $\norm{\bar\nu-\nu}_{\dot{H}^{-1}(\bar\nu)}$ are all finite and with  $\cH^\mathrm{sym}(\mu,\bar\mu),\,\cH^\mathrm{sym}(\nu,\bar\nu)<\oo$, it holds
\begin{align*}
    \int \log\frac{P_T  f}{P_T \bar f}\,\De\nu\leq \int\log\frac{ f}{\bar f}\,\De\mu\qquad\text{and}\qquad\int \log\frac{P_T g}{P_T \bar g}\,\De\mu\leq \int\log\frac{g}{\bar g}\,\De\nu\,.
\end{align*}
Analogous bounds hold when we exchange the roles between $(\mu,\nu,f,g)$ and $(\bar\mu,\bar\nu,\bar f,\bar g)$.
\end{lemma}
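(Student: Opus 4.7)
The heart of the argument is the combination of (i) the joint convexity of the relative-entropy density $\Phi(a,b)\coloneqq a\log(a/b)$ on $[0,+\infty)^2$ (with the usual conventions $\Phi(0,0)=0$ and $\Phi(a,0)=+\infty$ for $a>0$) with (ii) the fact that $P_T$ is a conservative Markov operator, i.e.\ $\hp_T(x,\cdot)\frm$ is a probability measure for every $x$. Applying Jensen's inequality in joint form to this probability measure and to the pair $(f,\bar f)$, one gets the pointwise bound
\[
P_Tf(x)\,\log\frac{P_Tf(x)}{P_T\bar f(x)}\;\leq\;P_T\!\left(f\,\log\frac{f}{\bar f}\right)(x),\qquad \frm\text{-a.e.\ }x.
\]

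The idea is then to multiply both sides of this inequality by $g\geq 0$ and integrate against $\frm$. Using $\De\nu=g\,P_Tf\,\De\frm$ (which is the first equation of the Schr\"odinger system for $(\mu,\nu)$, cf.\ Proposition~\ref{prop:fg}), the left-hand side becomes precisely
\[
\int g\cdot P_Tf\,\log\frac{P_Tf}{P_T\bar f}\,\De\frm \;=\; \int\log\frac{P_Tf}{P_T\bar f}\,\De\nu.
\]
For the right-hand side, self-adjointness of $P_T$ with respect to the reversible measure $\frm$ (which holds because $\frm$ is the invariant measure of the reversible SDE~\eqref{sde}) together with the second equation $\De\mu=f\,P_Tg\,\De\frm$ of the Schr\"odinger system gives
\[
\int g\cdot P_T\!\left(f\,\log\frac{f}{\bar f}\right)\De\frm \;=\; \int f\,\log\frac{f}{\bar f}\cdot P_Tg\,\De\frm \;=\; \int\log\frac{f}{\bar f}\,\De\mu,
\]
which is the first claimed inequality. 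The second inequality, concerning $g,\bar g$, follows by exactly the same reasoning after swapping the roles of $f$ and $g$ (i.e.\ applying Jensen to $(g,\bar g)$, multiplying by $f\geq 0$, and using $\De\mu=f\,P_Tg\,\De\frm$ on the left and self-adjointness together with $\De\nu=g\,P_Tf\,\De\frm$ on the right). The symmetric statements obtained by interchanging $(\mu,\nu,f,g)$ with $(\bar\mu,\bar\nu,\bar f,\bar g)$ are, of course, completely analogous.

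\textbf{Main technical point.} The only genuine difficulty is justifying that all the integrals appearing above are well-defined and that Fubini/self-adjointness can actually be applied. This is precisely where the standing assumptions of the lemma (finiteness of $\cH^\mathrm{sym}(\mu,\bar\mu)$, $\cH^\mathrm{sym}(\nu,\bar\nu)$, of the four $\dot H^{-1}$ norms, and \eqref{H1}--\eqref{H2} together with \eqref{CD}--\eqref{I}) are put to use via Corollary~\ref{cor:svolta}: that statement guarantees $\log f,\log\bar f\in L^1(\mu)\cap L^1(\bar\mu)$, $\log g,\log\bar g\in L^1(\nu)\cap L^1(\bar\nu)$, and likewise the $L^1$-integrability of $\log P_Tf,\log P_T\bar f$ and $\log P_Tg,\log P_T\bar g$ against the relevant marginals. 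In particular $f\log(f/\bar f)\in L^1(\mu)$ (which by $\De\mu=fP_Tg\,\De\frm$ means $f\log(f/\bar f)\cdot P_Tg\in L^1(\frm)$), so that the right-hand side of the Jensen bound is integrable against $g\,\De\frm$ once one invokes Tonelli on the nonnegative parts and balances out via self-adjointness; the analogous statement on the left-hand side is immediate from $\log(P_Tf/P_T\bar f)\in L^1(\nu)$. These a priori integrability facts make all the manipulations above legitimate and complete the proof.
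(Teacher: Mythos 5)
Your proof is correct and is in essence the paper's argument written out in disintegrated form: the paper simply invokes the monotonicity of relative entropy under marginalization, $\cH(\nu|\bar\nu)\leq\cH(\schrplan{\mu}{\nu}{T}|\schrplan{\bar\mu}{\bar\nu}{T})$, and expands both sides via the $fg$-decomposition, while your pointwise Jensen inequality for $(a,b)\mapsto a\log(a/b)$ under $P_T$, multiplied by $g$ and integrated against $\frm$, is exactly the standard proof of that data-processing inequality (nonnegativity of the conditional relative entropies in the chain rule $\cH(\pi|\bar\pi)=\cH(\nu|\bar\nu)+\int\cH(\pi(\cdot|y)|\bar\pi(\cdot|y))\,\De\nu(y)$). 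Your treatment of the integrability issues via Corollary \ref{cor:svolta} likewise matches what the paper relies on when this lemma is applied, so the two proofs differ only in packaging, not in substance.
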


\begin{proof}
Since $\cH(\nu|\bar\nu)\leq \cH(\schrplan{\mu}{\nu}{T}|\schrplan{\mu}{\bar\nu}{T})$, by means of the $fg$-decomposition the inequality reads as
\[\int \log\frac{P_T  f}{P_T \bar f}\De \nu+\int\log\frac{g}{\bar g}\De\nu\leq \int\log\frac{ f}{\bar f}\De \mu+\int\log\frac{ g}{\bar g}\De \nu\,,\]
which yields the first inequality. The other bound can be proven in the same way.
\end{proof}

By following the same line of reasoning applied for the stability of the optimal plans, we can now deduce the following

\begin{prop}\label{stab:entropic:cost}
Let $(M,\sfd, \frm)$ satisfy \eqref{CD} and \eqref{I}. For any $(\mu,\nu)$ and $(\bar\mu,\bar\nu)$ satisfying \eqref{H1}, \eqref{H2}, and such that $\norm{\mu-\bar\mu}_{\dot{H}^{-1}(\mu)},\,\norm{\bar\mu-\mu}_{\dot{H}^{-1}(\bar\mu)},\,\norm{\nu-\bar\nu}_{\dot{H}^{-1}(\nu)}$, and $\norm{\bar\nu-\nu}_{\dot{H}^{-1}(\bar\nu)}$ are all finite and with  $\cH^\mathrm{sym}(\mu,\bar\mu),\,\cH^\mathrm{sym}(\nu,\bar\nu)<\oo$, it holds
\begin{align*}
\cS_T(\mu,\nu)-\cS_T(\bar\mu,\bar\nu)\leq T\biggl[\cH(\bar\mu|\mu)\wedge \cH(\bar\nu|\nu)\biggr]
&+\frac{T}{\sqrt{E_{2\kappa}(T)}}\,\sqrt{\cC_T(\mu,\nu)-\cH(\mu|\frm)}\norm{\mu-\bar\mu}_{\dot{H}^{-1}(\mu)}\\
&+\frac{T}{\sqrt{E_{2\kappa}(T)}}\,\sqrt{\cC_T(\mu,\nu)-\cH(\nu|\frm)}\norm{\nu-\bar\nu}_{\dot{H}^{-1}(\nu)},
\end{align*}
where $E_{2\kappa}$ is defined as in \eqref{eq:curvature-factor}, from which it follows
\begin{equation*} 
\begin{aligned}
     &\abs{\cS_T(\bar\mu,\bar\nu)-\cS_T(\mu,\nu)}\leq T\biggl[\cH^\mathrm{sym}(\mu,\bar\mu)\wedge \cH^\mathrm{sym}(\nu,\bar\nu)\biggr]\\
    &\qquad+\frac{T}{\sqrt{E_{2\kappa}(T)}}\biggl[\sqrt{\cC_T(\mu,\nu)-\cH(\mu|\frm)}\norm{\mu-\bar\mu}_{\dot{H}^{-1}(\mu)}+\sqrt{\cC_T(\mu,\nu)-\cH(\nu|\frm)}\norm{\nu-\bar\nu}_{\dot{H}^{-1}(\nu)}\biggr]\\
&\qquad+\frac{T}{\sqrt{E_{2\kappa}(T)}}\biggl[\sqrt{\cC_T(\bar\mu,\bar\nu)-\cH(\bar\mu|\frm)}\norm{\bar\mu-\mu}_{\dot{H}^{-1}(\bar\mu)}+\sqrt{\cC_T(\bar\mu,\bar\nu)-\cH(\bar\nu|\frm)}\norm{\bar\nu-\nu}_{\dot{H}^{-1}(\bar\nu)}\biggr]\,.
\end{aligned}
\end{equation*}
\end{prop}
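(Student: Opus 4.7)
The proof rests on a clean one-sided bound combined with symmetrization. The starting point is the compact representation of the entropic cost afforded by the renormalization~\eqref{gia:renormalization}, namely $\cS_T(\mu,\nu) = -T\int \log P_Tg\,\De\mu - T\int\log P_Tf\,\De\nu$ and its analogue with bars. Subtracting the two and inserting $\pm T\int\log P_Tg\,\De\bar\mu$ and $\pm T\int\log P_Tf\,\De\bar\nu$ produces the telescoping identity
\begin{align*}
\cS_T(\mu,\nu) - \cS_T(\bar\mu,\bar\nu)
&= -T\int\log P_Tg\,\De(\mu-\bar\mu) - T\int\log P_Tf\,\De(\nu-\bar\nu) \\
&\quad + T\int\log\frac{P_T\bar g}{P_Tg}\,\De\bar\mu + T\int\log\frac{P_T\bar f}{P_Tf}\,\De\bar\nu\,,
\end{align*}
in which every integral is a finite real number thanks to Corollary~\ref{cor:svolta}.

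The heart of the argument is to dominate the last two \emph{cross terms} by $T\,[\cH(\bar\mu|\mu)\wedge \cH(\bar\nu|\nu)]$. Applying the role-swapped version of Lemma~\ref{disslemma} to the $\bar f$-cross term, $\int\log(P_T\bar f/P_Tf)\,\De\bar\nu \leq \int\log(\bar f/f)\,\De\bar\mu$, and leaving the $\bar g$-cross term untouched, the sum collapses, \emph{via} the $fg$-decomposition identity $\log(\De\bar\mu/\De\mu) = \log(\bar f/f) + \log(P_T\bar g/P_Tg)$, to $T\cH(\bar\mu|\mu)$; applying Lemma~\ref{disslemma} instead to the $\bar g$-cross term and leaving the $\bar f$-cross term untouched symmetrically collapses the sum to $T\cH(\bar\nu|\nu)$, and taking the better of the two estimates gives the desired bound.

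Finally, the two remaining terms of the identity are controlled by the $\dot{H}^{-1}/\dot{H}^1$-duality: $|T\int\log P_Tg\,\De(\mu-\bar\mu)| \leq T\norm{\nabla\log P_Tg}_{L^2(\mu)}\norm{\mu-\bar\mu}_{\dot{H}^{-1}(\mu)}$ and similarly for the $\nu$-integral. The corrector estimates of Proposition~\ref{lemma:gia}, which apply thanks to the standing assumption~\eqref{H2}, convert these $L^2$-norms into the prefactor $\sqrt{\cC_T(\mu,\nu)-\cH(\mu|\frm)}/\sqrt{E_{2\kappa}(T)}$ and its $\nu$-analogue, closing the first inequality of the statement. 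The two-sided bound then follows by running the same argument with $(\mu,\nu)$ and $(\bar\mu,\bar\nu)$ interchanged --- producing the companion inequality with $\cH(\mu|\bar\mu)\wedge\cH(\nu|\bar\nu)$ and the barred quantities --- and combining with $\cH^{\mathrm{sym}}(\alpha,\beta)\geq\cH(\alpha|\beta)\vee\cH(\beta|\alpha)$. The main subtlety is the integrability bookkeeping underpinning the telescoping identity, and this is precisely what Corollary~\ref{cor:svolta} furnishes under our hypotheses on the $\dot{H}^{-1}$-norms and symmetric entropies.
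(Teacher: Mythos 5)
Your proof is correct and follows essentially the same route as the paper's: the same representation $\cS_T(\mu,\nu)=-T\int\log P_Tg\,\De\mu-T\int\log P_Tf\,\De\nu$, the same telescoping decomposition (the paper writes it for $\cS_T(\bar\mu,\bar\nu)-\cS_T(\mu,\nu)$ and then swaps roles, while you telescope directly in the stated direction), the same use of Lemma~\ref{disslemma} to collapse the cross terms into $T[\cH(\bar\mu|\mu)\wedge\cH(\bar\nu|\nu)]$ via the Schr\"odinger system, and the same $\dot H^{-1}/\dot H^1$ duality plus corrector estimates for the remaining terms. The integrability bookkeeping via Corollary~\ref{cor:svolta} and the final symmetrization also match the paper.
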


\begin{proof}
First of all, since the assumptions of Corollary \ref{cor:svolta} are met, the following computations are all well defined. With this said, let us start by noticing that
\[
\cS_T(\mu,\nu)=T\,\cC_T(\mu,\nu)-T\,\cH(\mu|\frm)-T\,\cH(\nu|\frm)=-T\int\log P_T g\,\De\mu-T\int\log P_T f\,\De\nu\,,
\]
and therefore 
\begin{align*}
    \cS_T(\bar\mu,\bar\nu)-\cS_T(\mu,\nu)=T\int\log P_T g\,\De\mu+T\int\log P_T f\,\De\nu-T\int\log P_T \bar g\,\De\bar\mu-T\int\log P_T \bar f\,\De\bar\nu\\
    =T\int\log\frac{ P_T g}{P_T\bar g}\De\mu+T\int\log \frac{P_T f}{P_T\bar f}\De\nu-T\int\log P_T\bar g\,\De(\bar\mu-\mu)-T\int\log P_T\bar f\,\De(\bar\nu-\nu)\,.
\end{align*}
Applying Lemma \ref{disslemma} we deduce that
\begin{equation}\label{giappo}
\begin{aligned}
    \cS_T(\bar\mu,\bar\nu)-\cS_T(\mu,\nu) & \leq T\int\log\frac{ P_T g}{P_T\bar g} \De\mu+T\int\log \frac{ f}{\bar f}\De\mu\\
    &\qquad\qquad\qquad-T\int\log P_T\bar g\,\De(\bar\mu-\mu)-T\int\log P_T\bar f\,\De(\bar\nu-\nu)\\
    &=T\int\log\frac{\De\mu}{\De\bar\mu}\De\mu-T\int\log P_T\bar g\,\De(\bar\mu-\mu)-T\int\log P_T\bar f\,\De(\bar\nu-\nu)\\
    &=T\cH(\mu|\bar\mu)-T\int\log P_T\bar g\,\De(\bar\mu-\mu)-T\int\log P_T\bar f\,\De(\bar\nu-\nu)\,.
\end{aligned}\end{equation}
Notice that above we could have used Lemma \ref{disslemma} on the other integral $\int\log\frac{ P_T g}{P_T\bar g}\De\mu$  and therefore we would have got $\cH(\nu|\bar\nu)$ instead of $\cH(\mu|\bar\mu)$. Therefore we can state that
\begin{align*}
\cS_T(\bar\mu,\bar\nu)-\cS_T(\mu,\nu) & \leq T\Bigl[\cH(\mu|\bar\mu)\wedge \cH(\nu|\bar\nu)\Bigr]-T\int\log P_T\bar g\,\De(\bar\mu-\mu)-T\int\log P_T\bar f\,\De(\bar\nu-\nu)\\
& \leq T\Bigl[\cH(\mu|\bar\mu)\wedge \cH(\nu|\bar\nu)\Bigr]+T\norm{\nabla\log P_T\bar g}_{L^2(\bar\mu)}\norm{\bar\mu-\mu}_{\dot{H}^{-1}(\bar\mu)} \\
& \qquad\qquad +T\norm{\nabla\log P_T \bar f}_{L^2(\bar\nu)}\norm{\bar\nu-\nu}_{\dot{H}^{-1}(\bar\nu)}.
\end{align*}
By exchanging the roles between $(\mu,\nu)$ and $(\bar\mu,\bar\nu)$ and by arguing in the same way we also get
\begin{align*}\cS_T(\mu,\nu)-\cS_T(\bar\mu,\bar\nu)\leq T\Bigl[\cH(\bar\mu|\mu)\wedge \cH(\bar\nu|\nu)\Bigr] & + T\norm{\nabla\log P_T g}_{L^2(\mu)}\norm{\mu-\bar\mu}_{\dot{H}^{-1}(\mu)} \\
& + T\norm{\nabla\log P_T  f}_{L^2(\nu)}\norm{\nu-\bar\nu}_{\dot{H}^{-1}(\nu)},\end{align*}
and therefore we deduce 
\begin{align*}
    \abs{\cS_T(\bar\mu,\bar\nu)-\cS_T(\mu,\nu)}\leq&\, T\Bigl[\cH^\mathrm{sym}(\mu,\bar\mu)\wedge \cH^\mathrm{sym}(\nu,\bar\nu)\Bigr]
    \\
    +&\,T\norm{\nabla\log P_T g}_{L^2(\mu)}\norm{\mu-\bar\mu}_{\dot{H}^{-1}(\mu)}
    +T\norm{\nabla\log P_T  f}_{L^2(\nu)}\norm{\nu-\bar\nu}_{\dot{H}^{-1}(\nu)}\\
    +&\,T\norm{\nabla\log P_T\bar g}_{L^2(\bar\mu)}\norm{\bar\mu-\mu}_{\dot{H}^{-1}(\bar\mu)}
    +T\norm{\nabla\log P_T \bar f}_{L^2(\bar\nu)}\norm{\bar\nu-\nu}_{\dot{H}^{-1}(\bar\nu)}.
\end{align*}
Given the above, the thesis follows from the corrector estimates (cf.\ Proposition \ref{lemma:gia}).
\end{proof}

\begin{remark} Notice that in the cases where we change just one marginal (e.g. $\mu=\bar\mu$),  we can get rid of the relative entropy between the marginals in the above result. Therefore in order to get a bound without the symmetric relative entropy of the marginals it is enough to consider the case where one of the marginals is frozen by means of the trivial inequality
\begin{equation}
   \abs{\cS_T(\bar\mu,\bar\nu)-\cS_T(\mu,\nu)}\leq \abs{\cS_T(\bar\mu,\bar\nu)-\cS_T(\mu,\bar\nu)}+\abs{\cS_T(\mu,\bar\nu)-\cS_T(\mu,\nu)}\,.
\end{equation} 
\end{remark}

\medskip

Let us conclude by showing that, under the finite Fisher information assumption, we may write a stability estimate for the Schr\"odinger costs without involving the symmetric relative entropies on the right-hand side.

\begin{prop}\label{stab:fish:finite}
Let $(M,\sfd, \frm)$ satisfy \eqref{CD} and \eqref{I}. For any $(\mu,\nu)$ and $(\bar\mu,\bar\nu)$ satisfying \eqref{H1} and \eqref{H2} we have
\begin{equation*}\begin{aligned}
     \abs{\cC_T(\bar\mu,\bar\nu)-\cC_T(\mu,\nu)}\leq\,& \frac{1}{\sqrt{E_{2\kappa}(T)}}\biggl[\sqrt{\cI(\mu)}+\sqrt{\cC_T(\mu,\nu)-\cH(\mu|\frm)}\biggr]\norm{\mu-\bar\mu}_{\dot{H}^{-1}(\mu)}\\
+&\frac{1}{\sqrt{E_{2\kappa}(T)}}\biggl[\sqrt{\cI(\bar\mu)}+\sqrt{\cC_T(\bar\mu,\bar\nu)-\cH(\bar\mu|\frm)}\biggr]\norm{\bar\mu-\mu}_{\dot{H}^{-1}(\bar\mu)}\\
+& \frac{1}{\sqrt{E_{2\kappa}(T)}}\biggl[\sqrt{\cI(\nu)}+\sqrt{\cC_T(\mu,\nu)-\cH(\nu|\frm)}\biggr]\norm{\nu-\bar\nu}_{\dot{H}^{-1}(\nu)}\\
+&\frac{1}{\sqrt{E_{2\kappa}(T)}}\biggl[\sqrt{\cI(\bar\nu)}+\sqrt{\cC_T(\bar\mu,\bar\nu)-\cH(\bar\nu|\frm)}\biggr]\norm{\bar\nu-\nu}_{\dot{H}^{-1}(\bar\nu)}\,,
 \end{aligned}\end{equation*}
where $E_{2\kappa}$ is defined as in \eqref{eq:curvature-factor}.
\end{prop}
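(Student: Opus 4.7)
The plan is to adapt the duality argument underlying Proposition~\ref{stab:entropic:cost}, but to replace the step based on Lemma~\ref{disslemma} --- which is precisely what introduces the symmetric relative entropies $\cH^\mathrm{sym}(\mu,\bar\mu)\wedge\cH^\mathrm{sym}(\nu,\bar\nu)$ --- by a direct application of the dual formulation of~\eqref{SP}. Since $\bar f\otimes\bar g\cdot\rmR_{0,T}$ is a probability measure, we have the normalization $\int \bar f\otimes\bar g\,\De\rmR_{0,T}=1$, so Gibbs' variational principle applied with the test function $(x,y)\mapsto \log\bar f(x)+\log\bar g(y)$ yields, for every $\pi\in\Pi(\mu,\nu)$,
\[
\cH(\pi|\rmR_{0,T})\geq \int\log\bar f\,\De\mu+\int\log\bar g\,\De\nu.
\]
Taking the infimum over $\pi\in\Pi(\mu,\nu)$ and combining with the identity $\cC_T(\bar\mu,\bar\nu)=\int\log\bar f\,\De\bar\mu+\int\log\bar g\,\De\bar\nu$, which follows from the $fg$-decomposition of $\schrplan{\bar\mu}{\bar\nu}{T}$, produces the fundamental one-sided inequality
\[
\cC_T(\bar\mu,\bar\nu)-\cC_T(\mu,\nu)\leq\int\log\bar f\,\De(\bar\mu-\mu)+\int\log\bar g\,\De(\bar\nu-\nu).
\]

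I would then estimate each integral on the right-hand side by $\dot{H}^{-1}$-duality. The Schr\"odinger system \eqref{schr:system} gives $\log\bar f=\log\bar\rho-\log P_T\bar g$ with $\bar\rho=\De\bar\mu/\De\frm$, so the triangle inequality together with the corrector estimate \eqref{eq:corrector:mu} from Proposition~\ref{lemma:gia} (applicable thanks to~\eqref{H2}) yields
\[
\norm{\nabla\log\bar f}_{L^2(\bar\mu)}\leq \sqrt{\cI(\bar\mu)}+\frac{1}{\sqrt{E_{2\kappa}(T)}}\sqrt{\cC_T(\bar\mu,\bar\nu)-\cH(\bar\mu|\frm)},
\]
whence, by definition of the negative homogeneous Sobolev norm,
\[
\int\log\bar f\,\De(\bar\mu-\mu)\leq \Bigl[\sqrt{\cI(\bar\mu)}+\tfrac{1}{\sqrt{E_{2\kappa}(T)}}\sqrt{\cC_T(\bar\mu,\bar\nu)-\cH(\bar\mu|\frm)}\Bigr]\norm{\bar\mu-\mu}_{\dot{H}^{-1}(\bar\mu)}.
\]
The analogous bound for $\int\log\bar g\,\De(\bar\nu-\nu)$ follows from $\log\bar g=\log\bar\sigma-\log P_T\bar f$ and the twin estimate \eqref{eq:corrector:nu}. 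Exchanging the roles of $(\mu,\nu)$ and $(\bar\mu,\bar\nu)$ in the same argument produces the matching bound on $\cC_T(\mu,\nu)-\cC_T(\bar\mu,\bar\nu)$, and summing the two inequalities delivers the claimed estimate (after pulling the common factor $1/\sqrt{E_{2\kappa}(T)}$ outside each bracket, which can only weaken the bound).

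The main technical hurdle is to make sense of $\int\log\bar f\,\De(\bar\mu-\mu)$ and $\int\log\bar g\,\De(\bar\nu-\nu)$ as genuine differences of two finite integrals, i.e., to establish $\log\bar f\in L^1(\mu)\cap L^1(\bar\mu)$ and $\log\bar g\in L^1(\nu)\cap L^1(\bar\nu)$ under the relatively mild hypotheses in the statement. This is handled exactly as in the opening of the proof of \eqref{eq:fish:stab:plans}: without loss of generality the right-hand side of the proposition is finite, so all four Fisher informations and all four $\dot{H}^{-1}$-norms are finite, and this forces in turn $\cH^\mathrm{sym}(\mu,\bar\mu)$ and $\cH^\mathrm{sym}(\nu,\bar\nu)$ to be finite. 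Corollary~\ref{cor:svolta} then supplies precisely the integrabilities above and legitimises every manipulation performed in the previous two paragraphs.
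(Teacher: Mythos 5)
Your proposal is correct and, at its core, follows the same strategy as the paper's proof: test the potentials of one problem against the marginals of the other, control the resulting linear functionals by $\dot H^{-1}$-duality, and bound $\norm{\nabla\log\bar f}_{L^2(\bar\mu)}$ via $\log\bar f=\log\bar\rho-\log P_T\bar g$, the Fisher information and the corrector estimates, with Corollary \ref{cor:svolta} supplying the integrability. The only difference is how the one-sided inequality is reached: the paper writes, in analogy with \eqref{giappo}, $\cC_T(\bar\mu,\bar\nu)-\cC_T(\mu,\nu)\leq\int\log\bar f\,\De(\bar\mu-\mu)+\int\log\bar g\,\De(\bar\nu-\nu)-\bigl[\cH(\mu|\bar\mu)\vee\cH(\nu|\bar\nu)\bigr]$, obtained by decomposing the difference of costs and recognizing the cross terms as $-\cH(\schrplan{\mu}{\nu}{T}|\schrplan{\bar\mu}{\bar\nu}{T})$, whereas you invoke weak duality (Gibbs' principle with test function $\log\bar f\oplus\log\bar g$). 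These are two readings of the same identity $\cH(\pi|\rmR_{0,T})=\int\log\bar f\,\De\mu+\int\log\bar g\,\De\nu+\cH(\pi|\schrplan{\bar\mu}{\bar\nu}{T})$ for $\pi\in\Pi(\mu,\nu)$; your version simply discards the non-negative deficit from the outset, which the paper also does in its final estimate, so nothing is lost and Lemma \ref{disslemma} is bypassed.

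One caveat: your remark that pulling the common factor $1/\sqrt{E_{2\kappa}(T)}$ outside the bracket "can only weaken the bound" is false whenever $E_{2\kappa}(T)>1$, since what the argument actually produces is $\sqrt{\cI(\bar\mu)}+E_{2\kappa}(T)^{-1/2}\sqrt{\cC_T(\bar\mu,\bar\nu)-\cH(\bar\mu|\frm)}$, not $E_{2\kappa}(T)^{-1/2}\bigl[\sqrt{\cI(\bar\mu)}+\sqrt{\cC_T(\bar\mu,\bar\nu)-\cH(\bar\mu|\frm)}\bigr]$. The paper's own proof stops at $\norm{\nabla\log\bar f}_{L^2(\bar\mu)}$ and performs the same rewriting silently, so this is an imprecision in the stated constant shared with the paper (harmless in the small-time regime where $E_{2\kappa}(T)\leq 1$) rather than a gap specific to your argument; still, you should either state the bound with the prefactor only on the cost term or add the hypothesis $E_{2\kappa}(T)\leq 1$.
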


\begin{proof}
The proof runs like the one given in Proposition \ref{stab:entropic:cost} observing that, similarly to \eqref{giappo}, we can write
\[\cC_T(\bar\mu,\bar\nu)-\cC_T(\mu,\nu)\leq \int\log \bar f\,\De(\bar\mu-\mu)+\int \log\bar g\,\De(\bar\nu-\nu)-\biggl[\cH(\mu|\bar\mu)\vee\cH(\nu|\bar\nu)\biggr] \]
and therefore it holds
\[\begin{split}
    \abs{\cC_T(\bar\mu,\bar\nu)-\cC_T(\mu,\nu)}\leq \norm{\nabla\log f}_{L^2(\mu)}\norm{\mu-\bar\mu}_{\dot{H}^{-1}(\mu)}+\norm{\nabla\log\bar f }_{L^2(\bar\mu)}\norm{\bar\mu-\mu}_{\dot{H}^{-1}(\bar\mu)} \\
    +\norm{\nabla\log  g}_{L^2(\nu)}\norm{\nu-\bar\nu}_{\dot{H}^{-1}(\nu)}
    +\norm{\nabla\log \bar g}_{L^2(\bar\nu)}\norm{\bar\nu-\nu}_{\dot{H}^{-1}(\bar\nu)}\,.
\end{split}\]
\end{proof}
Notice that in the above result we are able to consider the stability directly between the Schr\"odinger costs $\cC_T(\mu,\nu)$ and $\cC_T(\bar\mu,\bar\nu)$. This is indeed due to the fact that we are working in a finite Fisher information setting.

\subsection{Application to EOT with quadratic cost}\label{app:EOT}
 In this section we  translate the stability results stated in Theorem \ref{stab:piani} and Proposition \ref{stab:entropic:cost} to the Euclidean EOT setting with quadratic cost
\begin{equation}\label{quadratic:EOT}\cS^\varepsilon(\mu,\nu)\coloneqq \inf_{\pi\in\Pi(\mu,\nu)}\int |x-y|^2\,\De\pi+\varepsilon\,\cH(\pi|\mu\otimes\nu)\,,\qquad\varepsilon>0\,.\end{equation}
We will manage to do that under sufficiently general conditions, so that our results will apply to any couple of marginals $\mu,\nu\in\cP_2(\RD)$ with finite relative entropy w.r.t.\ the Lebesgue measure $\cL_d$ on $\RD$ and with densities w.r.t.\ $\cL_d$ locally bounded away from $0$ on their support and such that $\mu(\partial \supp(\mu))=\nu(\partial \supp(\nu))=0$; or with bounded and compactly supported densities. 
For later reference, let $\EOTplan{\mu}{\nu}{\varepsilon}$ denote the minimizer (which exists and is unique since $\mu,\nu\in\cP_2(\RD)$, cf.\ \cite[Theorem 4.2]{Marcel:notes}).

In what follows we are going to consider a Schr\"odinger problem equivalent to the above \eqref{quadratic:EOT}, where the underlying stochastic dynamics is given by the law of the Ornstein–Uhlenbeck process
\begin{equation}\label{OU}
\De X_t=-\kappa X_t\De t+\sqrt{2}\,\De B_t\,,
\end{equation}
$(B_t)_t$ being a $d$-dimensional Brownian motion and $\kappa>0$ a curvature parameter (whose value will be specified later). The above SDE admits as unique invariant measure the Gaussian distribution $\frm\sim\cN\bigl(0,\kappa^{-1}\Id\bigr)$, i.e.\ $\De\frm(x)\sim e^{-\frac{\kappa}{2}\abs{x}^2}\De x$, which satisfies the curvature condition $\CD(\kappa,\oo)$ and hence a logarithmic Sobolev inequality with parameter $\kappa^{-1}$ \cite[Corollary 5.7.1]{bakry2013analysis}. Then, Herbst's argument \cite[Proposition 5.4.1]{bakry2013analysis} implies that $\frm$ satisfies the integrability condition \eqref{I} for any $r<\kappa/2 $. Notice that for any choice of $\kappa>0$ we have
  \begin{equation}\label{rel:entropie}\cH(\mu|\frm)=\cH(\mu|\cL_d)+\frac{\kappa}{2}\,M_2(\mu)+\frac{d}{2}\log\frac{2\pi}{\kappa}<+\oo \end{equation}
  and similarly $\cH(\nu|\frm)<+\oo$.

Now, let us consider a final time in SP such that $\frac{\varepsilon\kappa}{4}=\sinh(\kappa T)$, i.e.\
\begin{equation}\label{T:def}T\coloneqq \frac{1}{\kappa}\log\left(\frac{\varepsilon\kappa}{4}+\sqrt{\frac{\varepsilon^2\kappa^2}{16}+1}\right)\,\end{equation}
and let $\rmR_{0,T}=\cL(X_0,X_T)$ denote the joint law at times $0$ and $T$ of the Ornstein–Uhlenbeck process solving \eqref{OU} started at the invariant measure $\frm$, whose density is given by the transition kernel 
\[
\hp_T(x,y)=\frac{\De\rmR_{0,T}}{\De(\frm\otimes\frm)}(x,y)=\frac{1}{(1-e^{-2\kappa T})^{\frac{d}{2}}}\,\exp\left\{-\frac{\abs{x}^2-2e^{\kappa T}\,x\cdot y+\abs{y}^2}{\frac{2}{\kappa}(e^{2\kappa T}-1)}\right\}\,.
\]
Then, thanks to our choice of $T$, for any coupling $\pi\in\Pi(\mu,\nu)$ it holds
\begin{equation*}
\begin{aligned}
&\int_{\R^{2d}}\abs{x-y}^2\De\pi+\varepsilon\,\cH(\pi|\mu\otimes\nu)=M_2(\mu)+ M_2(\nu)-2\int_{\R^{2d}}x\cdot y\,\De\pi+\varepsilon\,\cH(\pi|\mu\otimes\nu)\\
&=-\frac{d\varepsilon}{2}\log(1-e^{-2\kappa T})+(1-e^{-\kappa T})\bigl(M_2(\mu)+ M_2(\nu)\bigr)-\varepsilon\int_{\R^{2d}}\log \hp_T(x,y)\,\De\pi+\varepsilon\,\cH(\pi|\mu\otimes\nu)\\
&=-\frac{d\varepsilon}{2}\log(1-e^{-2\kappa T})+(1-e^{-\kappa T})\bigl(M_2(\mu)+ M_2(\nu)\bigr)+\varepsilon\left[\cH(\pi|\mu\otimes\nu)-\int_{\R^{2d}}\log \frac{\De \rmR}{\De(\frm\otimes\frm)}\,\De\pi\right]\\
&=-\frac{d\varepsilon}{2}\log(1-e^{-2\kappa T})+(1-e^{-\kappa T})\bigl(M_2(\mu)+ M_2(\nu)\bigr)+\varepsilon\,\cH(\pi|\rmR_{0,T})-\varepsilon\,\cH(\mu|\frm)-\varepsilon\,\cH(\nu|\frm)\,.
\end{aligned}
\end{equation*}
Therefore the above EOT problem \eqref{quadratic:EOT} has the same minimizer of SP with reference given by the above-chosen $\rmR_{0,T}$ and their values are linked according to the following identity 
\begin{equation}\label{reason:not:ill}
\begin{aligned}\cS^\varepsilon(\mu,\nu)=&\,\varepsilon\,\cC_T(\mu,\nu)-\varepsilon\,\cH(\mu|\frm)-\varepsilon\,\cH(\nu|\frm)-\frac{d\varepsilon}{2}\log(1-e^{-2\kappa T})+(1-e^{-\kappa T})\bigl(M_2(\mu)+ M_2(\nu)\bigr)\\
=&\,\frac{\varepsilon}{T}\,\cS_T(\mu,\nu)-\frac{d\varepsilon}{2}\log(1-e^{-2\kappa T})+(1-e^{-\kappa T})\bigl(M_2(\mu)+ M_2(\nu)\bigr)\,.
\end{aligned}
\end{equation}
Since we are interested in stability results, take also $\bar\mu,\,\bar\nu\in \cP_2(\RD)$ such that
\[
\cH(\bar\mu|\cL_d),\,\cH(\bar\nu|\cL_d)<+\oo\quad\text{ and }\quad\cH^\mathrm{sym}(\mu,\bar\mu),\, \cH^\mathrm{sym}(\nu,\bar\nu)<+\oo\,.
\]
We are now ready to apply Proposition \ref{stab:entropic:cost} and get
\begin{equation}\label{troppi:parametri}
\begin{aligned}
    |\cS^\varepsilon(\bar\mu,\bar\nu)&-\cS^\varepsilon(\mu,\nu)|\\\leq& \frac{\varepsilon}{T}\abs{\cS_T(\bar\mu,\bar\nu)-\cS_T(\mu,\nu)}
    +\left(1-e^{-\kappa T}\right)\Bigl(\abs{M_2(\bar\mu)-M_2(\mu)}+\abs{ M_2(\bar\nu)-M_2(\nu)}\Bigr)\\
    \leq&\, \varepsilon\Bigl[\cH^\mathrm{sym}(\mu,\bar\mu)\wedge \cH^\mathrm{sym}(\nu,\bar\nu)\Bigr]+\left(1-e^{-\kappa T}\right)\Bigl(\abs{M_2(\bar\mu)-M_2(\mu)}+\abs{M_2(\bar\nu)-M_2(\nu)}\Bigr)\\
    +&\,\frac{\varepsilon}{\sqrt{E_{2\kappa}(T)}}\biggl[\sqrt{\cC_T(\mu,\nu)-\cH(\mu|\frm)}\norm{\mu-\bar\mu}_{\dot{H}^{-1}(\mu)}+\sqrt{\cC_T(\mu,\nu)-\cH(\nu|\frm)}\norm{\nu-\bar\nu}_{\dot{H}^{-1}(\nu)}\biggr]\\
   +&\,\frac{\varepsilon}{\sqrt{E_{2\kappa}(T)}}\biggl[\sqrt{\cC_T(\bar\mu,\bar\nu)-\cH(\bar\mu|\frm)}\norm{\bar\mu-\mu}_{\dot{H}^{-1}(\bar\mu)}+\sqrt{\cC_T(\bar\mu,\bar\nu)-\cH(\bar\nu|\frm)}\norm{\bar\nu-\nu}_{\dot{H}^{-1}(\bar\nu)}\biggr]
   \,.
\end{aligned}
\end{equation}
Since for any coupling $\pi\in\Pi(\bar\mu,\mu)$ we can write
\begin{align*}
    \abs{M_2(\bar\mu)-M_2(\mu)}=&\,\abs{\int \abs{x}^2-\abs{y}^2\,\De\pi}\leq \int\abs{x(x-y)}\De\pi+\int\abs{y(x-y)}\De\pi\\
    \leq&\, \left(\sqrt{M_2(\bar\mu)}+\sqrt{M_2(\mu)}\right)\left(\int\abs{x-y}^2\De\pi\right)^{\frac12}\,,
\end{align*}
 by minimizing the right-hand side over $\pi\in\Pi(\mu,\nu)$ we end up with
\begin{align*}
    \abs{M_2(\bar\mu)-M_2(\mu)}\leq \left(\sqrt{M_2(\bar\mu)}+\sqrt{M_2(\mu)}\right)\cW_2(\bar\mu,\mu)
\end{align*}
and \eqref{troppi:parametri} reads as
\begin{equation}\label{troppi:costi}\begin{aligned}
    |\cS^\varepsilon&(\bar\mu,\bar\nu)-\cS^\varepsilon(\mu,\nu)|\leq \varepsilon\Bigl[\cH^\mathrm{sym}(\mu,\bar\mu)\wedge \cH^\mathrm{sym}(\nu,\bar\nu)\Bigr]\\&+\left(1-e^{-\kappa T}\right)\left(\sqrt{M_2(\bar\mu)}+\sqrt{M_2(\mu)}\right)\cW_2(\bar\mu,\mu)+\left(1-e^{-\kappa T}\right)\left(\sqrt{M_2(\bar\nu)}+\sqrt{M_2(\nu)}\right)\cW_2(\bar\nu,\nu)\\
    &+\frac{\varepsilon}{\sqrt{E_{2\kappa}(T)}}\biggl[\sqrt{\cC_T(\mu,\nu)-\cH(\mu|\frm)}\norm{\mu-\bar\mu}_{\dot{H}^{-1}(\mu)}+\sqrt{\cC_T(\mu,\nu)-\cH(\nu|\frm)}\norm{\nu-\bar\nu}_{\dot{H}^{-1}(\nu)}\biggr]\\
   &+\frac{\varepsilon}{\sqrt{E_{2\kappa}(T)}}\biggl[\sqrt{\cC_T(\bar\mu,\bar\nu)-\cH(\bar\mu|\frm)}\norm{\bar\mu-\mu}_{\dot{H}^{-1}(\bar\mu)}+\sqrt{\cC_T(\bar\mu,\bar\nu)-\cH(\bar\nu|\frm)}\norm{\bar\nu-\nu}_{\dot{H}^{-1}(\bar\nu)}\biggr]
   \,.
\end{aligned}\end{equation}
Similarly, since $\schrplan{\mu}{\nu}{T}$ is also the optimizer of the EOT problem \eqref{quadratic:EOT}, we can translate Theorem \ref{stab:piani} into a stability result between the optimal plans for \eqref{quadratic:EOT}, that is, between $\EOTplan{\mu}{\nu}{\varepsilon}$ and $\EOTplan{\bar\mu}{\bar\nu}{\varepsilon}$:
\begin{equation}\label{troppi:piani}\begin{aligned}\varepsilon\,&\cH^\mathrm{sym}(\EOTplan{\mu}{\nu}{\varepsilon},\EOTplan{\bar\mu}{\bar\nu}{\varepsilon})=\frac{\varepsilon}{T}\,T\,\cH^\mathrm{sym}(\schrplan{\mu}{\nu}{T},\schrplan{\bar\mu}{\bar\nu}{T})\leq\, \varepsilon\,\cH^\mathrm{sym}(\mu,\bar\mu)+ \varepsilon\,\cH^\mathrm{sym}(\nu,\bar\nu)\\
  +&\,\frac{\varepsilon}{\sqrt{E_{2\kappa}(T)}}\biggl[\sqrt{\cC_T(\mu,\nu)-\cH(\mu|\frm)}\norm{\mu-\bar\mu}_{\dot{H}^{-1}(\mu)}+\sqrt{\cC_T(\mu,\nu)-\cH(\nu|\frm)}\norm{\nu-\bar\nu}_{\dot{H}^{-1}(\nu)}\biggr]\\
   +&\,\frac{\varepsilon}{\sqrt{E_{2\kappa}(T)}}\biggl[\sqrt{\cC_T(\bar\mu,\bar\nu)-\cH(\bar\mu|\frm)}\norm{\bar\mu-\mu}_{\dot{H}^{-1}(\bar\mu)}+\sqrt{\cC_T(\bar\mu,\bar\nu)-\cH(\bar\nu|\frm)}\norm{\bar\nu-\nu}_{\dot{H}^{-1}(\bar\nu)}\biggr]
   \,.
\end{aligned}\end{equation}
Given the above bounds one can now get explicit estimates by choosing the curvature parameter $\kappa>0$.

\begin{remark}[Small-noise limit] Let us point out that the above bounds are stable when $\varepsilon\to 0$. Indeed \eqref{T:def} guarantees us that the small-noise limit $\varepsilon\to 0$ is equivalent to the small-time limit $T\to0$ and moreover the ratio $\frac{\varepsilon}{T}$ stays finite since
\begin{align*}
    \frac{\varepsilon}{T}=\frac{\varepsilon\kappa}{\kappa T}=\frac{4\sinh(\kappa T)}{\kappa T}=2\,\frac{e^{\kappa T}-e^{-\kappa T}}{\kappa T}\,\to \,4\,,\quad \text{as $T\to 0$.}
\end{align*}
Therefore in the small-noise limit, from \eqref{eq:small:cost} we deduce for \eqref{quadratic:EOT} the following stability results
\begin{equation}\label{small:noise:EOT}\begin{aligned}
    \limsup_{\varepsilon\to 0}\abs{\cS^\varepsilon(\bar\mu,\bar\nu)-\cS^\varepsilon(\mu,\nu)}\leq& 2\,\cW_2(\mu,\nu)\,\biggl[\norm{\mu-\bar\mu}_{\dot{H}^{-1}(\mu)}+\norm{\nu-\bar\nu}_{\dot{H}^{-1}(\nu)}\biggr]\\
    +&2\,\cW_2(\bar\mu,\bar\nu)\,\biggl[\norm{\bar\mu-\mu}_{\dot{H}^{-1}(\bar\mu)}+\norm{\bar\nu-\nu}_{\dot{H}^{-1}(\bar\nu)}\biggr]\,,\\
    \limsup_{\varepsilon\to 0}\,\varepsilon\,\cH^\mathrm{sym}(\EOTplan{\mu}{\nu}{\varepsilon},\EOTplan{\bar\mu}{\bar\nu}{\varepsilon})\leq& 2\,\cW_2(\mu,\nu)\,\biggl[\norm{\mu-\bar\mu}_{\dot{H}^{-1}(\mu)}+\norm{\nu-\bar\nu}_{\dot{H}^{-1}(\nu)}\biggr]\\
    +&2\,\cW_2(\bar\mu,\bar\nu)\,\biggl[\norm{\bar\mu-\mu}_{\dot{H}^{-1}(\bar\mu)}+\norm{\bar\nu-\nu}_{\dot{H}^{-1}(\bar\nu)}\biggr]\,,
\end{aligned}\end{equation}
which agree, up to a scaling constant, with Remark \ref{small:time:rem} in the Schr\"odinger setting.
\end{remark}

Notice that the above small-noise limit is independent from the choice of $\kappa>0$. This suggests us to take the limit $\kappa\downarrow 0$ directly in \eqref{troppi:costi} and \eqref{troppi:piani}. Indeed from \eqref{T:def} it follows $ T\to \,\varepsilon/4$ as $\kappa\downarrow 0$.
Furthermore, by rearranging \eqref{reason:not:ill} we notice that
\[
\begin{split}
T\,\cC_T(\mu,\nu)-T\,\cH(\mu|\frm) & =\frac{T}{\varepsilon}\, \cS^\varepsilon(\mu,\nu)-\frac{T}{\varepsilon}(1-e^{-\kappa T}) \biggl(M_2(\mu)+M_2(\nu)\biggr) +T\,\cH(\nu|\frm)\\
& \qquad +\frac{d\,T}{2}\log\left(1-e^{-2\kappa T}\right)\\
& =\frac{T}{\varepsilon}\, \cS^\varepsilon(\mu,\nu)-\frac{T}{\varepsilon}(1-e^{-\kappa T})\biggl(M_2(\mu)+M_2(\nu)\biggr) +T\,\cH(\nu|\cL_d) \\
& \qquad +\frac{\kappa T}{2}M_2(\nu) +\frac{d\,T}{2}\log\left(4\pi T\,\frac{1-e^{-2\kappa T}}{2\kappa T}\right)
\end{split}
\]
and therefore we have
\[\lim_{\kappa\to 0}\, T\,\cC_T(\mu,\nu)-T\,\cH(\mu|\frm)=\frac{1}{4}\,\cS^\varepsilon(\mu,\nu)+\frac{\varepsilon}{4}\,\cH(\mu|\cL_d)+\frac14\,C_\varepsilon\,,\]
where $C_\varepsilon=\frac{d\varepsilon}{2}\log(4\pi\varepsilon)$.
As a consequence, by taking the limit in the right-hand sides of \eqref{troppi:costi} and \eqref{troppi:piani}, we deduce that
\begin{equation*}\begin{aligned}
    \abs{\cS^\varepsilon(\bar\mu,\bar\nu)-\cS^\varepsilon(\mu,\nu)}\leq \varepsilon\Bigl[\cH^\mathrm{sym}(\mu,\bar\mu)\wedge \cH^\mathrm{sym}(\nu,\bar\nu)&\Bigr]\\
    +2 \biggl[\sqrt{\cS^\varepsilon(\mu,\nu)+\varepsilon\,\cH(\nu|\cL_d)+C_\varepsilon}\norm{\mu-\bar\mu}_{\dot{H}^{-1}(\mu)}&+\sqrt{\cS^\varepsilon(\mu,\nu)+\varepsilon\,\cH(\mu|\cL_d)+C_\varepsilon}\norm{\nu-\bar\nu}_{\dot{H}^{-1}(\nu)}\biggr]\\
    +2\biggl[\sqrt{\cS^\varepsilon(\bar\mu,\bar\nu)+\varepsilon\,\cH(\bar\nu|\cL_d)+C_\varepsilon}\norm{\bar\mu-\mu}_{\dot{H}^{-1}(\bar\mu)}&+\sqrt{\cS^\varepsilon(\bar\mu,\bar\nu)+\varepsilon\,\cH(\bar\mu|\cL_d)+C_\varepsilon}\norm{\bar\nu-\nu}_{\dot{H}^{-1}(\bar\nu)}\biggr]
\end{aligned}\end{equation*}
and
\begin{equation*}\begin{aligned}
    \varepsilon\,\cH^\mathrm{sym}(\EOTplan{\mu}{\nu}{\varepsilon},\EOTplan{\bar\mu}{\bar\nu}{\varepsilon})\leq\, \varepsilon\,\cH^\mathrm{sym}(\mu,\bar\mu)+ \varepsilon\,\cH^\mathrm{sym}(\nu,\bar\nu)&\,\\
    +2 \biggl[\sqrt{\cS^\varepsilon(\mu,\nu)+\varepsilon\,\cH(\nu|\cL_d)+C_\varepsilon}\norm{\mu-\bar\mu}_{\dot{H}^{-1}(\mu)}&+\sqrt{\cS^\varepsilon(\mu,\nu)+\varepsilon\,\cH(\mu|\cL_d)+C_\varepsilon}\norm{\nu-\bar\nu}_{\dot{H}^{-1}(\nu)}\biggr]\\
    +2\biggl[\sqrt{\cS^\varepsilon(\bar\mu,\bar\nu)+\varepsilon\,\cH(\bar\nu|\cL_d)+C_\varepsilon}\norm{\bar\mu-\mu}_{\dot{H}^{-1}(\bar\mu)}&+\sqrt{\cS^\varepsilon(\bar\mu,\bar\nu)+\varepsilon\,\cH(\bar\mu|\cL_d)+C_\varepsilon}\norm{\bar\nu-\nu}_{\dot{H}^{-1}(\bar\nu)}\biggr]\,.
\end{aligned}\end{equation*}
Notice that from the above bounds,  we get the validity of \eqref{small:noise:EOT} in the small-noise limit $\varepsilon\downarrow 0$,.

\appendix

\section{A log-integrability Lemma}\label{app:log:int}

Let us start by recalling a few interesting facts about Orlicz spaces. 
Let $\frq$ be a probability measure on a measurable space $(\Omega,\Sigma)$ and consider the Young functions 
\begin{align*}
\theta(t) \coloneqq e^t-1\,, \qquad \theta^*(s) = \begin{cases}
s\,\log s-s+1\quad&\text{if }s>0,\\
1\quad&\text{if }s=0\,.
\end{cases}
\end{align*}
where $\theta^*(s)\coloneqq \sup_{t\in\R}\bigl\{st-\theta(t)\bigr\}$. The Orlicz space associated to the Young function $\theta$, denoted by $L_\theta(\frq)$, is defined as the space of measurable functions $f : \Omega \to \R$ with finite Luxemburg norm
\[
\norm{f}_\theta\coloneqq \inf \left\{b>0\colon\int\theta\left(\frac{\abs{f}}{b}\right)\De\frq\leq 1\right\}\,.
\]
To be precise we should make explicit reference to the underlying probability $\frq$, when talking about Luxemburg norms; however we will omit such reference when it is clear from the context.

When dealing with finite relative entropies, the Orlicz space associated to $\theta^*$ (defined analogously as for $\theta$) plays a natural role. Indeed, for any probability $\frp$ with $\cH(\frp|\frq) < +\infty$ we have
\begin{align*}
    \norm{\frac{\De\frp}{\De\frq}}_{\theta^*}= &\inf \left\{b>0\colon\int\theta^*\left(\frac1b\,\frac{\De\frp}{\De\frq}\right)\De\frq\leq 1\right\}=\inf \left\{b>0\colon\frac1b\cH(\frp|\frq)-\frac1b\left(1+\log b\right)+1\leq 1\right\}\\
    =&\inf \left\{b>0\colon \cH(\frp|\frq)-1\leq \log b\right\}=e^{\cH(\frp|\frq)-1}\,.
\end{align*}
Finally, let us recall that for any $f\in L_\theta(\frq)$ and $g\in L_{\theta^*}(\frq)$ we have
\begin{equation}\label{Orlicz:Young}
\int\abs{fg}\De\frq\leq 2\norm{f}_\theta\,\norm{g}_{\theta^*}\,,
\end{equation}
which is a consequence of the trivial inequality
\[
st\leq \theta(t)+\theta^*(s)\quad\forall s,t>0\,.
\]

After this digression, let us prove the following lemma, which is pivotal in our reasoning since it allows to deduce the log-integrability of any positive measurable function under the sole hypotheses of finite relative entropy and some positive and negative integrability of the same function. 

\begin{lemma}\label{log:integrability}
Let $h$ be a positive measurable function and $\frp,\frq \in \cP(\Omega)$ with $\cH(\frp|\frq)<\oo$. If there exist $p,\,q>0$ such that $h\in L^{q}(\frq)$ and $h^{-1}\in L^{p}(\frq)$, then $\log h\in L^1(\frp)$. More precisely, it holds
\begin{equation}\label{bound:L1}\int\abs{\log h}\De\frp\leq 2\,e^{\cH(\frp|\frq)-1}\,\left[ \frac{1}{1\wedge p\wedge q}\vee \log_2\left(\frq\{h\geq1\}^{1-\frac{1}{q}}\norm{h}_{L^q(\frq)}+\frq\{h<1\}^{1-\frac{1}{p}}\norm{h^{-1}}_{L^p(\frq)}\right)\right]\,,\end{equation}
and also
\begin{equation}\label{bound:L1:no:measure}
\int\abs{\log h}\De\frp\leq \frac{2\,e^{\cH(\frp|\frq)-1}}{p\wedge q}\,\left[1\vee\log_2\left(\norm{h}_{L^q(\frq)}^{p\wedge q}+\norm{h^{-1}}_{L^p(\frq)}^{p\wedge q}\right)\right]\,.
\end{equation}
\end{lemma}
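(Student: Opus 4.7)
The plan is to exploit the Orlicz duality recalled at the beginning of the appendix, which converts the task of bounding $\int |\log h|\, \De\frp$ into controlling the Luxemburg norm $\norm{\log h}_\theta$ in $L_\theta(\frq)$. Since $\cH(\frp|\frq)<\oo$, the density $\De\frp/\De\frq$ lies in $L_{\theta^*}(\frq)$ with the explicit norm $e^{\cH(\frp|\frq)-1}$ computed just above the lemma, so the Young-type estimate \eqref{Orlicz:Young} gives at once
\[
\int |\log h|\, \De\frp \leq 2\, e^{\cH(\frp|\frq)-1}\, \norm{\log h}_\theta.
\]
The task is thus reduced to exhibiting a number $b>0$ for which $\int e^{|\log h|/b}\, \De\frq \leq 2$, since $\norm{\log h}_\theta$ is by definition the infimum of such $b$.

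The central decomposition is the pointwise identity $e^{|\log h|/b} = h^{1/b}\IND_{\{h\geq 1\}} + h^{-1/b}\IND_{\{h<1\}}$. To obtain \eqref{bound:L1:no:measure} I will set $\alpha\coloneqq p\wedge q$ and apply Jensen twice. A first Jensen step, after crudely dropping the indicators, gives
\[
\int e^{\alpha|\log h|}\, \De\frq \leq \int h^\alpha\, \De\frq + \int h^{-\alpha}\, \De\frq \leq \norm{h}_{L^q(\frq)}^\alpha + \norm{h^{-1}}_{L^p(\frq)}^\alpha,
\]
because $\int h^\alpha\, \De\frq = \int (h^q)^{\alpha/q}\, \De\frq \leq \norm{h}_{L^q(\frq)}^\alpha$ by Jensen's inequality applied to the concave map $x\mapsto x^{\alpha/q}$ (valid since $\alpha\leq q$), and similarly for $h^{-\alpha}$. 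A second Jensen step, legitimate as soon as $b\alpha \geq 1$, raises the previous bound to the power $1/(b\alpha)$:
\[
\int e^{|\log h|/b}\, \De\frq \leq \bigl(\norm{h}_{L^q(\frq)}^\alpha + \norm{h^{-1}}_{L^p(\frq)}^\alpha\bigr)^{1/(b\alpha)}.
\]
Solving for the smallest $b$ making the right-hand side $\leq 2$ and combining with the admissibility constraint $b \geq 1/\alpha$ returns exactly the expression in \eqref{bound:L1:no:measure}.

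For the sharper bound \eqref{bound:L1} that keeps the measure factors, the strategy is identical but the first Jensen step is upgraded to a Hölder estimate on the restricted sets, namely
\[
\int h^\alpha \IND_{\{h\geq 1\}}\, \De\frq \leq \norm{h}_{L^q(\frq)}^\alpha\, \frq\{h \geq 1\}^{1-\alpha/q},
\]
and analogously for $h^{-\alpha}\IND_{\{h<1\}}$. The crucial algebraic point is that, for the choice $\alpha \coloneqq 1\wedge p \wedge q\leq 1$, the monotonicity inequality $\frq\{h\geq 1\}^{1-\alpha/q} \leq \frq\{h \geq 1\}^{\alpha(1-1/q)}$ holds (because $\frq\{h \geq 1\} \leq 1$ and $\alpha \leq 1$ yields $1-\alpha/q \geq \alpha(1-1/q)$), which identifies the right-hand side above with $A_1^\alpha$, where $A_1$ is precisely the product of norm and measure factor appearing inside the $\log_2$ in \eqref{bound:L1}. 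The second Jensen step and the determination of $b$ then mirror the previous case.

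The main technical delicacy is precisely this monotonicity check for the measure factors: the requirement $\alpha \leq 1$ is what forces $1\wedge p\wedge q$ (rather than merely $p\wedge q$) to appear in \eqref{bound:L1}. Aside from this point, the argument runs uniformly in the two regimes $p\wedge q \geq 1$ (where one takes $\alpha=1$) and $p\wedge q < 1$ (where $\alpha=p\wedge q$), with no ingredients beyond Jensen's inequality, Hölder's inequality on sublevel sets, and the elementary monotonicity of $t\mapsto \frq\{E\}^t$ for $\frq(E)\leq 1$.
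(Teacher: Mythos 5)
Your overall framework (Orlicz--Young duality, the explicit value $e^{\cH(\frp|\frq)-1}$ of the $\theta^*$-Luxemburg norm of $\De\frp/\De\frq$, and the reduction to finding $b$ with $\int e^{|\log h|/b}\,\De\frq\le 2$) is exactly the paper's, and your proof of \eqref{bound:L1:no:measure} is correct: the two Jensen steps with $\alpha=p\wedge q$ deliver precisely $\bar b=\frac{1}{p\wedge q}\bigl[1\vee\log_2(\norm{h}_{L^q(\frq)}^{p\wedge q}+\norm{h^{-1}}_{L^p(\frq)}^{p\wedge q})\bigr]$. The qualitative conclusion $\log h\in L^1(\frp)$ is therefore also established.

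There is, however, a genuine gap in your derivation of \eqref{bound:L1} when $p\wedge q<1$. Writing $A_1=\frq\{h\ge 1\}^{1-1/q}\norm{h}_{L^q(\frq)}$ and $A_2=\frq\{h<1\}^{1-1/p}\norm{h^{-1}}_{L^p(\frq)}$, your Hölder step gives $\int e^{\alpha|\log h|}\,\De\frq\le A_1^\alpha+A_2^\alpha$ with $\alpha=1\wedge p\wedge q$, and the subsequent global Jensen step yields $\int e^{|\log h|/b}\,\De\frq\le (A_1^\alpha+A_2^\alpha)^{1/(\alpha b)}$, which is $\le 2$ only for $b\ge \frac{1}{\alpha}\log_2(A_1^\alpha+A_2^\alpha)$. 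By subadditivity of $t\mapsto t^\alpha$ one has $A_1^\alpha+A_2^\alpha\ge (A_1+A_2)^\alpha$, so $\frac{1}{\alpha}\log_2(A_1^\alpha+A_2^\alpha)\ge \log_2(A_1+A_2)$, with strict inequality in general (e.g.\ $A_1=A_2$ gives an excess of $\frac{1}{\alpha}-1$). Hence for $\alpha<1$ you prove a strictly weaker bound than \eqref{bound:L1}, not \eqref{bound:L1} itself. The missing idea is to keep the two restricted integrals as a \emph{convex combination}: with $\lambda=\frq\{h\ge 1\}$, one writes $\int_{\{h\ge1\}}h^{1/b}\,\De\frq\le \lambda\,(A_1/\lambda)^{1/b}$ and similarly for the negative part, and then applies concavity of $t\mapsto t^{1/b}$ to the two-point average with weights $\lambda$ and $1-\lambda$, which produces $(\lambda\cdot A_1/\lambda+(1-\lambda)\cdot A_2/(1-\lambda))^{1/b}=(A_1+A_2)^{1/b}$ directly and hence the threshold $b\ge \log_2(A_1+A_2)$, with the constraint $b\ge \frac{1}{1\wedge p\wedge q}$ coming only from the admissibility of the Jensen/Hölder exponents. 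This is the route the paper takes (with a free parameter $\delta$ unifying both bounds), and it is what allows the measure factors to survive inside a single $\log_2$ of a plain sum.
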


\begin{proof}
By means of the Orlicz-Young inequality \eqref{Orlicz:Young} we have 
\begin{equation}\label{stima:ass}\int\abs{\log h}\De\frp=\int\abs{\frac{\De\frp}{\De\frq}\,\log h}\De\frq\leq 2 \norm{\log h}_\theta\,\norm{\frac{\De\frp}{\De\frq}}_{\theta^*}= 2  \norm{\log h}_\theta\, e^{\cH(\frp|\frq)-1}\,.\end{equation}
Therefore it suffices to show that the above Luxemburg norm $\norm{\log h}_\theta$ is finite.
For any $b>0$ we have
\begin{align}\label{b:theta}
    \int\theta\left(\frac{\abs{\log h}}{b}\right)\De\frq=\int\theta\left(\abs{\log h^{\frac1b}}\right)\De\frq=\int_{\{h\geq 1\}} h^{\frac1b}\De\frq+\int_{\{h<1\}}h^{-\frac1b}\De\frq -1\,.
\end{align}
Before proceeding, let us point out that the above right-hand side is always non-negative since it is greater or equal to $\frq\{h\geq 1\}+\frq\{h<1\}-1=0$.

Now consider any parameter $\delta\in(0,\oo)$ (to be fixed later)
and fix $b\geq\frac{1}{\delta\wedge p\wedge q}$. 
 For the sake of notation, let $P \coloneqq \{h\geq 1\}$ and its complementary $N \coloneqq \{h<1\}$. In what follows we are going to assume that both $\frq(P)>0$ and $\frq(N)>0$; the case $\frq(P)\in\{0,1\}$ can be treated in the same fashion. Now, introduce the probability measures on these sets induced by $\frq$, i.e.\
\[
\De \frq_{|_P}(x)\coloneqq\frac{\De\frq(x)}{\frq(P)}\qquad\text{and}\qquad\De\frq_{|_N}(x)\coloneqq\frac{\De\frq(x)}{\frq(N)}\,.
\]
Since $qb\geq 1$, from Jensen's inequality we deduce that
\begin{align*}
      \int_{\{h\geq 1\}} h^\frac1b\De\frq= \frq(P)\int_{P} h^\frac1b\De\frq_{|_P}=\frq(P)\int_{P} h^{q\,\frac{1}{qb}}\De\frq_{|_P}\leq \frq(P)\left(\int_{P} h^{q}\De\frq_{|_P}\right)^\frac{1}{qb}\\
      =\frq(P)^{1-\frac{1}{qb}}\left(\int_{P} h^{q}\De\frq\right)^\frac{1}{qb}\leq \frq(P)^{1-\frac{1}{qb}}\norm{h}_{L^q(\frq)}^\frac1b\,.
\end{align*}
Similarly, from $pb\geq 1$ we deduce that 
\begin{align*}
     \int_{\{h<1\}}\left(\frac1h\right)^{\frac1b}\De\frq=\frq(N)\int_{N} h^{-\frac1b}\De\frq_{|_N}=\frq(N)\int_{N} h^{-p\,\frac{1}{pb}}\De\frq_{|_N}\leq \frq(N)\left(\int_{N} h^{-p}\De\frq_{|_N}\right)^\frac{1}{pb}\\
      =\frq(N)^{1-\frac{1}{pb}}\left(\int_{N} h^{-p}\De\frq\right)^\frac{1}{pb}\leq \frq(N)^{1-\frac{1}{pb}}\norm{h^{-1}}_{L^p(\frq)}^\frac1b\,.
\end{align*}
For the sake of brevity let $\lambda \coloneqq \frq(P)$. Since also $\delta b\geq 1$, from \eqref{b:theta} and the concavity of $x^\frac{1}{\delta b}$ we deduce
\[
\begin{split}
\int\theta\left(\frac{\abs{\log h}}{b}\right)\De\frq & \leq\lambda^{1-\frac{1}{qb}}\norm{h}_{L^q(\frq)}^\frac1b+(1-\lambda)^{1-\frac{1}{pb}}\norm{h^{-1}}_{L^p(\frq)}^\frac1b-1\\
& =\lambda\left(\frac{\norm{h}_{L^q(\frq)}^\delta}{\lambda^\frac{\delta}{q}}\right)^\frac{1}{\delta b}+(1-\lambda)\left(\frac{\norm{h^{-1}}_{L^p(\frq)}^\delta}{(1-\lambda)^\frac{\delta}{p}}\right)^\frac{1}{\delta b}-1\\
& \leq \left(\lambda^{1-\frac{\delta}{q}}\norm{h}_{L^q(\frq)}^\delta+(1-\lambda)^{1-\frac{\delta}{p}}\norm{h^{-1}}_{L^p(\frq)}^\delta\right)^\frac{1}{\delta b}-1\leq 1\,,
\end{split}
\]
where the last inequality holds as soon as $b\geq \frac{1}{\delta}\,\log_2\left(\lambda^{1-\frac{\delta}{q}}\norm{h}_{L^q(\frq)}^\delta+(1-\lambda)^{1-\frac{\delta}{p}}\norm{h^{-1}}_{L^p(\frq)}^\delta\right)$. Therefore, if we take
\[
\bar b\coloneqq\frac{1}{\delta\wedge p\wedge q}\vee \frac{1}{\delta}\,\log_2\left(\lambda^{1-\frac{\delta}{q}}\norm{h}_{L^q(\frq)}^\delta+(1-\lambda)^{1-\frac{\delta}{p}}\norm{h^{-1}}_{L^p(\frq)}^\delta\right)\,,
\]
we deduce that $\int\theta\left(\frac{\abs{\log h}}{\bar b}\right)\De\frq\leq 1$,
and hence 
\begin{equation}\label{delta:norm}
\norm{\log h}_\theta\leq\bar b=\frac{1}{\delta\wedge p\wedge q}\vee \frac{1}{\delta}\,\log_2\left(\lambda^{1-\frac{\delta}{q}}\norm{h}_{L^q(\frq)}^\delta+(1-\lambda)^{1-\frac{\delta}{p}}\norm{h^{-1}}_{L^p(\frq)}^\delta\right)\,. 
\end{equation}
From this last result and \eqref{stima:ass} it follows that $\log h\in L^1(\frq)$ and that
\begin{equation}\label{delta:L1}
     \int\abs{\log h}\De\frp\leq 2\,e^{\cH(\frp|\frq)-1}\,\left[\frac{1}{\delta\wedge p\wedge q}\vee \frac{1}{\delta}\,\log_2\left(\lambda^{1-\frac{\delta}{q}}\norm{h}_{L^q(\frq)}^\delta+(1-\lambda)^{1-\frac{\delta}{p}}\norm{h^{-1}}_{L^p(\frq)}^\delta\right)\right]\,.
\end{equation}
By taking $\delta=1$ we immediately get \eqref{bound:L1}. In order to get the second bound, take $\delta\coloneqq p\wedge q$. Then \eqref{delta:norm} reads as
 \begin{align}\label{minimi:appo}
     \norm{\log h}_\theta\leq \frac{1}{ p\wedge q}\left[1\vee \log_2\left(\lambda^{1-\frac{p\wedge q}{q}}\norm{h}_{L^q(\frq)}^{p\wedge q}+(1-\lambda)^{1-\frac{p\wedge q}{p}}\norm{h^{-1}}_{L^p(\frq)}^{p\wedge q}\right)\right]\,.
 \end{align}
 Since $\lambda=\frq(P)\in[0,1]$ we deduce that
 \begin{align*}
     \log_2\left(\frq(P)^{1-\frac{p\wedge q}{q}}\norm{h}_{L^q(\frq)}^{p\wedge q}+(1-\frq(P))^{1-\frac{p\wedge q}{p}}\norm{h^{-1}}_{L^p(\frq)}^{p\wedge q}\right)\\
     \leq \log_2\sup_{\lambda\in[0,1]}\left(\lambda^{1-\frac{p\wedge q}{q}}\norm{h}_{L^q(\frq)}^{p\wedge q}+(1-\lambda)^{1-\frac{p\wedge q}{p}}\norm{h^{-1}}_{L^p(\frq)}^{p\wedge q}\right)\\
     =\begin{cases}
     \log_2 \left(\norm{h}_{L^q(\frq)}^{p}+\norm{h^{-1}}_{L^p(\frq)}^{p}\right)\quad\text{if }p\leq q\\
      \log_2 \left(\norm{h}_{L^q(\frq)}^{q}+\norm{h^{-1}}_{L^p(\frq)}^{q}\right)\quad\text{if }q< p
     \end{cases}\\
     =\log_2\left(\norm{h}_{L^q(\frq)}^{p\wedge q}+\norm{h^{-1}}_{L^p(\frq)}^{p\wedge q}\right)\,,
 \end{align*}
 and hence 
 \begin{equation}\label{norma:theta}\norm{\log h}_\theta\leq \frac{1}{ p\wedge q}\left[1\vee\log_2\left(\norm{h}_{L^q(\frq)}^{p\wedge q}+\norm{h^{-1}}_{L^p(\frq)}^{p\wedge q}\right)\right]\,.\end{equation}
 As a byproduct of the above bound and \eqref{stima:ass} we get \eqref{bound:L1:no:measure}.
\end{proof}

\begin{remark}
When $\frq\{h\geq 1\}\in\{0,1\}$, the bound \eqref{bound:L1:no:measure} can be improved. Indeed from \eqref{minimi:appo} we straightforwardly deduce that
\begin{align}\label{extreme:no:measure}
    \int\abs{\log h}\De\frp\leq\,
    \begin{cases} 2\,e^{\cH(\frp|\frq)-1}\,\displaystyle{\bigg(\frac{1}{p\wedge q}\vee\log_2\norm{h}_{L^q(\frq)}\bigg)} \qquad&\text{if }\frq\{h\geq 1\}=1\,,\\
    \\
    2\,e^{\cH(\frp|\frq)-1}\,\displaystyle{\bigg(\frac{1}{p\wedge q}\vee\log_2\norm{h^{-1}}_{L^p(
\frq)}\bigg)} \qquad&\text{if }\frq\{h\geq 1\}=0\,.
    \end{cases}
\end{align}
\end{remark}

\begin{corollary}\label{cor:final:log}
If $p\wedge q\leq1$ in the previous Lemma, then it holds
\begin{equation}\label{final:bound:L1}
    \int\abs{\log h}\De\frp\leq 2\,e^{\cH(\frp|\frq)-1}\,\left[\frac{1}{p\wedge q}+\left(\log_2\,\frac{\norm{h}_{L^q(\frq)}+\norm{h^{-1}}_{L^p(\frq)}}{2}\right)^+\,\right]\,.
\end{equation}
\end{corollary}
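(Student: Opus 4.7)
The plan is to start from the bound \eqref{bound:L1:no:measure} already established in Lemma \ref{log:integrability} and then exploit the assumption $r \coloneqq p \wedge q \leq 1$ to convert the quantity $\|h\|_{L^q(\frq)}^{r}+\|h^{-1}\|_{L^p(\frq)}^{r}$ into something involving the arithmetic mean of the two norms. Concretely, recall from \eqref{bound:L1:no:measure} that
\[
\int\abs{\log h}\De\frp\leq \frac{2\,e^{\cH(\frp|\frq)-1}}{r}\,\left[1\vee\log_2\!\left(\norm{h}_{L^q(\frq)}^{r}+\norm{h^{-1}}_{L^p(\frq)}^{r}\right)\right].
\]

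The main (and really only) observation I need is the power-mean inequality: for $r\in(0,1]$ and any $a,b\geq0$,
\[
\left(\frac{a^{r}+b^{r}}{2}\right)^{1/r}\leq \frac{a+b}{2}, \qquad \text{equivalently}\qquad a^{r}+b^{r}\leq 2\left(\frac{a+b}{2}\right)^{r}.
\]
Applying this with $a=\norm{h}_{L^q(\frq)}$ and $b=\norm{h^{-1}}_{L^p(\frq)}$ and taking base-$2$ logarithms yields
\[
\log_2\!\left(\norm{h}_{L^q(\frq)}^{r}+\norm{h^{-1}}_{L^p(\frq)}^{r}\right)\leq 1+r\log_2\!\frac{\norm{h}_{L^q(\frq)}+\norm{h^{-1}}_{L^p(\frq)}}{2}.
\]

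It remains to distribute the factor $1/r$ and handle the $1\vee(\cdot)$. Setting $L\coloneqq \log_2\!\frac{\norm{h}_{L^q(\frq)}+\norm{h^{-1}}_{L^p(\frq)}}{2}$, the previous inequality reads $\log_2(\cdots)\leq 1+rL$. If $L\geq 0$, then $1+rL\geq 1$ and so
\[
\frac{1}{r}\left[1\vee(1+rL)\right]=\frac{1+rL}{r}=\frac{1}{r}+L^{+}.
\]
If instead $L<0$, then $1+rL<1$ and the quantity $1\vee\log_2(\cdots)$ is bounded by $1$, giving
\[
\frac{1}{r}\left[1\vee(1+rL)\right]\leq \frac{1}{r}=\frac{1}{r}+L^{+}.
\]
In both cases we obtain the desired uniform upper bound $\frac{1}{r}+L^{+}=\frac{1}{p\wedge q}+\left(\log_2\!\frac{\norm{h}_{L^q(\frq)}+\norm{h^{-1}}_{L^p(\frq)}}{2}\right)^{+}$, and plugging this into \eqref{bound:L1:no:measure} yields \eqref{final:bound:L1}.

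No serious obstacle is expected: the content is entirely in the power-mean inequality, which is where the hypothesis $p\wedge q\leq 1$ is used (it is exactly the range where $x\mapsto x^{r}$ is concave, which makes the bound $a^{r}+b^{r}\leq 2\bigl(\tfrac{a+b}{2}\bigr)^{r}$ available); the remainder is a short case split on the sign of $L$.
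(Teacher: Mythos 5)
Your proof is correct and follows the route the paper intends: starting from \eqref{bound:L1:no:measure} and using the power-mean inequality $a^{r}+b^{r}\leq 2\bigl(\tfrac{a+b}{2}\bigr)^{r}$ for $r=p\wedge q\leq 1$ (i.e.\ concavity of $x\mapsto x^{r}$), followed by the case split on the sign of $\log_2\tfrac{a+b}{2}$. Both cases are handled correctly, so nothing further is needed.
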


\bibliographystyle{plain}
\bibliography{revised2}

\end{document}